\theoremstyle{plain}
\newtheorem{theorem}{Theorem}[section]
\newtheorem{proposition}[theorem]{Proposition}
\newtheorem{lemma}[theorem]{Lemma}
\newtheorem*{signconjecture}{Euler Characteristic Sign Conjecture}
\newtheorem{definition}[theorem]{Definition}
\theoremstyle{remark}
\newtheorem{remark}{Remark}
\numberwithin{equation}{section}
\DeclareMathOperator{\lk}{lk}
\DeclareMathOperator{\conv}{conv}
\DeclareMathOperator{\sech}{sech}
\newcommand{\scc}{^{(sc)}}
\newcommand{\K}{\mathcal{K}}
\newcommand{\G}{\mathcal{G}}
\title{The combinatorics of hyperbolized manifolds}
\author{Allan L. Edmonds}
\address{Department of Mathematics, Indiana University, Bloomington, IN 47405}
\email{edmonds@indiana.edu}
\author{Steven Klee}
\address{Department of Mathematics, Seattle University, Seattle, WA 98122}
\email{klees@seattleu.edu}
\date{\today}
\begin{document}
\maketitle

\begin{abstract}
A topological version of a longstanding conjecture of H. Hopf, originally proposed by W. Thurston, states that the sign of the Euler characteristic of a closed aspherical manifold of dimension $d=2m$ depends only on the parity of $m$. Gromov defined several hyperbolization functors which produce an aspherical manifold from a given simplicial or cubical manifold. We investigate the combinatorics of several of these hyperbolizations and verify the Euler Characteristic Sign Conjecture for each of them. In addition, we explore further combinatorial properties of these hyperbolizations as they relate to several well-studied generating functions.
\end{abstract}

\tableofcontents

\section{Introduction}\label{sec:intro}

We study certain combinatorial aspects of the following fundamental unsolved problem in geometric topology, going back to a conjecture of H. Hopf about Riemannian manifolds of {nonpositive} curvature, and first formulated in dimension 4, and indeed all higher even dimensions, as a question by W. Thurston as early as 1977. (See Problem 4.10 in the Kirby Problem List \cite{Kirby1997}.)
\begin{signconjecture}
If $M$ is a closed, aspherical manifold of dimension $d = 2m$, then the Euler characteristic of $M$ satisfies $$(-1)^m \chi(M) \geq 0.$$
\end{signconjecture}
\noindent
Recall that a manifold $M$ is said to be \emph{aspherical} if $\pi_i(M) = 0$ for all $i \geq 2$ or, equivalently, the universal covering space of $M$ is contractible.  The conjectured sign corresponds to the sign of the Euler characteristic of a product of $m$ surfaces of genus $g \ge 1$.

There are certain ``hyperbolization functors,'' originally due to Gromov \cite{Gromov1987}, that assign to any simplicial or cubical $d$-manifold $\K$ an aspherical $d$-manifold $\mathcal{H}(\K)$.  We refer to Charney and Davis \cite{CharneyDavis1995b}; Davis and Januszkiewicz\cite{DavisJanuszkiewicz1991}; Davis, Januszkiewicz, and Weinberger\cite{DavisJanuszkiewicWeinberger2001}; and Paulin \cite{Paulin1991} for more details and analysis of such hyperbolization procedures. We will analyze the combinatorics of two  such hyperbolizations -- the M\"obius band hyperbolization and a somewhat more subtle one that we name the Gromov hyperbolization -- for both simplicial and cubical complexes and show that the Sign Conjecture is satisfied for each of them   (see Theorems \ref{thm:cubical-mobius}, \ref{thm:simplicial-mobius}, \ref{thm:cubical-gromov}, and \ref{thm:simplicial-gromov}).

\begin{theorem} \label{thm:main-thm}
If $\K$ is any closed simplicial or cubical manifold of dimension $d=2m$, and $\mathcal{H}$ denotes either the M\"obius band hyperbolization or Gromov hyperbolization, then $$(-1)^m \chi(\mathcal{H}(\K)) \geq 0.$$
\end{theorem}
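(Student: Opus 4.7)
My plan is to prove Theorem~\ref{thm:main-thm} by establishing the four specialized statements cited in the introduction---Theorems~\ref{thm:cubical-mobius}, \ref{thm:simplicial-mobius}, \ref{thm:cubical-gromov}, and \ref{thm:simplicial-gromov}---one for each combination of (simplicial or cubical) $\K$ with (M\"obius band or Gromov) hyperbolization $\mathcal{H}$. In every case the approach is to express $\chi(\mathcal{H}(\K))$ as an explicit combinatorial function of $\K$ and then verify the sign condition directly.

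Each hyperbolization functor is built by gluing a prescribed ``building block'' $B_i$ along the $i$-dimensional faces of $\K$, with compatible identifications among the lower-dimensional blocks. An inclusion--exclusion over the face poset (or, equivalently, a Mayer--Vietoris-type computation) should produce a closed expression of the form
$$\chi(\mathcal{H}(\K)) = \sum_{i=0}^{d} f_i(\K)\, c_i,$$
where each coefficient $c_i$ depends only on Euler characteristic data of $B_i$ together with its lower-dimensional pieces, and not on $\K$. The first concrete task is to compute these constants for each of the M\"obius band and Gromov building blocks.

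To obtain the sign, I would pursue two complementary tactics. For the cubical cases, the product structure of the building blocks should make the $c_i$ alternate in a sufficiently uniform way that, after extracting the factor $(-1)^m$, the resulting sum is manifestly nonnegative for every cubical manifold $\K$. For the simplicial cases the picture is more delicate, since signed combinations of $f_i(\K)$ do not generally have a fixed sign; I would therefore rewrite the formula in terms of the $h$-vector (or flag $h$-vector) of $\K$ and exploit the Dehn--Sommerville relations that hold for closed simplicial manifolds, together with Charney--Davis-style positivity when $\K$ is flag.

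The main obstacle I anticipate is the simplicial Gromov case. The corresponding building block is the most involved geometrically, its Euler characteristic is not elementary to evaluate, and the resulting signed sum in face numbers is not transparently signed. The most promising route appears to be a bootstrap: first verify the Sign Conjecture for the single block $B_d$ directly, using its asphericity and non-positive curvature, and then show that this positivity propagates through the face-by-face assembly without cancellation. If the bootstrap succeeds it also gives a uniform treatment of the other three cases and pinpoints where combinatorial positivity of $\K$ is actually required.
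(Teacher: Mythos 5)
Your first step---expressing $\chi(\mathcal{H}(\K))=\sum_i c_i f_i(\K)$ with $c_i=\chi(\mathcal{H}(B_i))-\chi(\mathcal{H}(\partial B_i))$ by inclusion--exclusion over the face poset---matches the paper exactly (these are the hyperbolization coefficients $a_{\mathcal{H}}(k)$ and $b_{\mathcal{H}}(k)$ of Sections~\ref{sec:mobius} and~\ref{sec:gromov}). The gap is in how you propose to extract the sign. In the cubical case the signed sum in $f$-numbers is \emph{not} manifestly nonnegative: already for $d=4$ and the M\"obius band hyperbolization one gets $f_0(\K)-f_1(\K)+2f_3(\K)$, whose nonnegativity for a closed cubical $4$-manifold is itself a theorem of Januszkiewicz, not a consequence of any alternation pattern in the coefficients; so the ``manifestly nonnegative after extracting $(-1)^m$'' tactic fails at the first nontrivial case. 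In the simplicial case, your appeal to Charney--Davis-style positivity ``when $\K$ is flag'' does not apply, because $\K$ is an arbitrary closed simplicial manifold; moreover the Charney--Davis conjecture is open in general (it is known for barycentric subdivisions by Karu, which is the route the paper attributes to Davis and Januszkiewicz, but that positivity lives on the vertex links inside $\mathcal{H}(\K)$, not on $\K$). Finally, the ``bootstrap'' for the simplicial Gromov case---prove the sign for a single block and show positivity ``propagates without cancellation''---restates the difficulty rather than resolving it: the blocks are even-dimensional manifolds with boundary glued along codimension-one pieces, and no local-to-global positivity statement at the level of Euler characteristics of the pieces is provided or evident.

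The missing idea, which the paper supplies uniformly in all four cases, is to change basis from the $f$-vector to the \emph{short} $h$-vector: Adin's short cubical $h$-numbers $h^{(sc)}_j(\K)=\sum_{v}h_j(\lk_{\K}(v))$ in the cubical case and the Hersh--Novik short simplicial $h$-numbers in the simplicial case. These are nonnegative for any closed homology manifold because each vertex link is Cohen--Macaulay (Reisner's criterion) and hence has nonnegative $h$-vector (Stanley); no flagness or Charney--Davis input is needed. Writing $\chi(\mathcal{H}(\K))=\sum_j c_{\mathcal{H}}(j,d)\,h^{(sc)}_j(\K)$, the transformed coefficients satisfy the Pascal-type recursion $c_{\mathcal{H}}(j,d)=c_{\mathcal{H}}(j+1,d)+c_{\mathcal{H}}(j,d-1)$, the skew-symmetry $c_{\mathcal{H}}(j,d)=(-1)^d c_{\mathcal{H}}(d-j,d)$, and a monotonicity in $j$, which together force $c_{\mathcal{H}}(j,d)\ge 0$ for all $j$ when $d\equiv 0\pmod 4$ and $c_{\mathcal{H}}(j,d)\le 0$ when $d\equiv 2\pmod 4$ (Lemmas~\ref{mobius-c-symmetry} and~\ref{mobius-c-monotony}, Lemmas~\ref{gromov-c-symmetry} and~\ref{gromov-c-monotony}, and their simplicial analogues). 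That is where the sign actually comes from, and nothing in your proposal substitutes for it.
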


When $\K$ is a cubical $4$-manifold, we have the following explicit formula for the M\"obius band hyperbolization in terms of the face numbers of the underlying cubical manifold,
$$
\chi(\mathcal{H}(\K))=f_{0}(\K)-f_{1}(\K)+2f_{3}(\K)\geq 0.
$$
This  inequality was originally proved by Janusziewicz \cite{Januszkiewicz1991} by directly studying chains of faces in the non-positively curved complex $\mathcal{H}(\K)$. We generalize this approach to prove the result in all even dimensions.  Before proceeding to the definitions and background material that will be necessary for the remainder of this paper, we will outline the general approach to our proof of the various manifestations of Theorem \ref{thm:main-thm}.  For simplicity, we will outline our approach for cubical complexes here; however, we will later see that the same approach will work for simplicial complexes as well.  

We apply one of the aforementioned hyperbolization functors to a given $d$-dimensional cubical complex $\K$ to obtain a hyperbolized complex $\mathcal{H}(\K)$.  Each of these hyperbolization functors is described by an inductive process in which the faces of $\K$ are replaced with certain hyperbolized cells, beginning with the $2$-dimensional faces and inducting to the $d$-dimensional faces.  Because of this, we are able to express the Euler characteristic of $\mathcal{H}(\K)$ as a function of the number of faces in the original complex $\K$.  Specifically, we define certain \textit{hyperbolization coefficients}, $a_{\mathcal{H}}(k)$ for $k=0,1,2,\ldots,$ so that 
\begin{equation} \label{hyperbolized-euler-char-general}
\chi(\mathcal{H}(\K)) = \sum_{k=0}^d a_{\mathcal{H}}(k)f_k(\K),
\end{equation}
where $f_k(\K)$ denotes the number of $k$-dimensional faces in the original complex $\K$. 

The face numbers (or $f$-numbers) $f_k(\K)$ are  natural invariants of $\K$ from the perspective of computing Euler characteristics; however, their behavior is rather fickle from a combinatorial perspective.  To address this problem, we will apply a standard combinatorial transformation which gives an equivalent family of invariants known as the \textit{short cubical $h$-numbers} of $\K$, which are denoted by $h\scc(\K)$.  Combinatorially, the short cubical $h$-numbers of a closed cubical manifold are well-behaved: for example, they are known to be symmetric by Klee's Dehn-Sommerville equations \cite{Klee1964}, and they are known to be nonnegative by a result of Stanley \cite{Stanley1975}.  Since the short cubical $h$-numbers are defined as a certain integer combination of the $f$-numbers, we can express the Euler characteristic of $\mathcal{H}(\K)$ as 
\begin{equation} \label{new-euler-char-formula}
\chi(\mathcal{H}(\K)) = \sum_{j=0}^dc_{\mathcal{H}}(j,d)h\scc_j(\K),
\end{equation}
for some other family of constants $c_{\mathcal{H}}(j,d)$ that are also rational combinations of the hyperbolization coefficients $a_{\mathcal{H}}(k)$ (but depend on the dimension of the complex in question).  

Just as the transformation from $f$-numbers to short cubical $h$-numbers may seem to be unwarranted, this transformation of hyperbolization coefficients into $c_{\mathcal{H}}(j,d)$ coefficients may seem somewhat arbitrary.  However, we are able to show that these new coefficients are beautifully structured: 
\begin{itemize}
\item$ c_{\mathcal{H}}(j,d) = (-1)^dc_{\mathcal{H}}(d-j,d)$,
\item $c_{\mathcal{H}}(j,d) \geq 0$ for all $j$ when $d \equiv 0 \mod 4$, and
\item $c_{\mathcal{H}}(j,d) \leq 0$ for all $j$ when $d \equiv 2 \mod 4$.  
\end{itemize}
Since the short cubical $h$-numbers of a closed cubical manifold are nonnegative, the fact that the coefficients $c_{\mathcal{H}}(j,d)$ are nonnegative/nonpositive when $d$ is even, together with the expression of $\chi(\mathcal{H}(\K))$ in Equation \eqref{new-euler-char-formula}, immediately implies Theorem \ref{thm:main-thm}.   


The conclusion of Theorem \ref{thm:main-thm} has in principle been known to experts for some time. We sketch an outline communicated to us by M. Davis and T. Januszkiewicz. The constructions in question involve barycentric subdivisions. The local contributions that build up the Euler characteristic are given by the Charney-Davis \cite{CharneyDavis1995a} quantity associated with the flag triangulated $(d-1)$-sphere links of the vertices. Because of the barycentric subdivisions, results of R. Stanley \cite{Stanley1994} and Karu  \cite{Karu2006} show that these local contributions are nonnegative/nonpositive as required. The virtue of the present approach lies in its more explicit and more elementary derivation and the intriguing combinatorial connections.


The organization of the remainder of this paper is as follows. In Section \ref{sec:prelims} we give a brief introduction to the combinatorics of $f$-numbers and $h$-numbers.  In Section \ref{sec:define-hyperbolizations}, we will define the M\"obius and Gromov hyperbolizations of a cubical or simplicial manifold.  In Section \ref{sec:mobius} we study the combinatorics of the M\"obius band hyperbolization, and in Section \ref{sec:gromov} we study the combinatorics of the Gromov hyperbolization.  In each if these sections, we will compute the Euler characteristic of a hyperbolized complex as in Equation \eqref{hyperbolized-euler-char-general} and study the properties of the corresponding hyperbolization coefficients.  We will conclude in Section \ref{sec:gen-funs} by studying interesting generating functions that arise from the hyperbolization coefficients.

\section{Background on combinatorial geometry} \label{sec:prelims}

In this section, we will define simplicial and cubical complexes combinatorially and discuss their geometric and combinatorial properties. For further details on these complexes, see \cite{Stanley-CCA}. The short cubical $h$-vector of Adin \cite{Adin1996} and its simplicial analogue due to Hersh and Novik \cite{HershNovik2002} will be central to our studies.

\subsection{Simplicial complexes and manifolds}

An \textit{(abstract) simplicial complex} $\Delta$ on the (finite) vertex set $V = V(\Delta)$ is a collection of subsets $F \subseteq V$ (called \textit{faces}) with the property that if $F \in \Delta$ and $G \subseteq F$, then $G \in \Delta$.  If $|V(\Delta)| = n$, there is a natural \textit{geometric realization} of $\Delta$, denoted $||\Delta|| \subseteq \mathbb{R}^n$ which identifies each face $F = \{i_1,\ldots,i_r\} \in \Delta$ with $||F|| = \conv\{e_{i_1}, \ldots, e_{i_r}\}$, where $e_j$ denotes the $j$-th standard basis vector.  

The \textit{dimension} of a face $F \in \Delta$ is $\dim F = |F|-1$, and the dimension of $\Delta$ is $\max\{\dim F: F \in \Delta\}$.  A simplicial complex $\Delta$ is \textit{pure} of all of its facets (maximal faces under inclusion) have the same dimension.  

The \textit{link} of a face $F$ in the simplicial complex $\Delta$ is the subcomplex $$\lk_{\Delta}(F):=\{G \in \Delta: F \cap G = \emptyset, F \cup G \in \Delta\}.$$  Note that when $\Delta$ is pure, $\lk_{\Delta}(F)$ is pure of dimension $\dim(\Delta) - |F|$ for any face $F \in \Delta$.  The following result of Munkres shows the importance of links in defining simplicial manifolds. 

\begin{lemma}{\rm{(Munkres \cite[Lemma 3.3]{Munkres1984})}} 
Let $\Delta$ be a simplicial complex, and let $F$ be a nonempty face of $\Delta$.  If $p$ is a point in the relative interior of $||F|| \subseteq ||\Delta||$, then $$H_i(||\Delta||, ||\Delta||-p) = \widetilde{H}_{i-|F|}(\lk_{\Delta}(F)).$$
\end{lemma}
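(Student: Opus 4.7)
The plan is to localize the computation to a small neighborhood of $p$ by excision, identify the local structure in terms of a join with $\lk_\Delta(F)$, and then read off the homology via the standard formula for the homology of a join.

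First I would pass from $||\Delta||$ to the closed star $V = \overline{\mathrm{st}}_\Delta(F) := \bigcup\{||G|| : F \subseteq G \in \Delta\}$. The complement $||\Delta||\setminus V$ is open and contained in $||\Delta||\setminus\{p\}$, and its closure in $||\Delta||$ misses $p$, so excision gives
\[
H_i(||\Delta||,\, ||\Delta||-p) \;\cong\; H_i(V,\, V-p).
\]
Next I would use the combinatorial description $V \cong ||F|| * \lk_\Delta(F)$: every face of $\Delta$ containing $F$ is the simplicial join of $F$ with a (possibly empty) face of $\lk_\Delta(F)$, and these assemble to give the geometric join. Since $||F||$ is a simplex, $V$ is contractible, and the long exact sequence of the pair yields
\[
H_i(V,\, V-p) \;\cong\; \widetilde{H}_{i-1}(V-p).
\]

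The main geometric step is then to show that $V-p$ deformation retracts onto $\partial||F|| * \lk_\Delta(F)$. Writing points of the join as $t\, x + (1-t)\, y$ with $x \in ||F||$, $y \in \lk_\Delta(F)$, $t \in [0,1]$, the assumption that $p$ lies in the relative interior of $||F||$ lets one radially push $x$ away from $p$ in $||F||$ towards $\partial ||F||$; this extends (leaving the $y$ and $t$ coordinates fixed) to a strong deformation retraction of $V-p$ onto the subjoin $\partial||F|| * \lk_\Delta(F)$. Identifying $\partial||F||$ with $S^{|F|-2}$ (with the convention $S^{-1} = \emptyset$ in the vertex case $|F| = 1$), this is a homotopy equivalence
\[
V - p \;\simeq\; S^{|F|-2} * \lk_\Delta(F).
\]

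Finally I would invoke the join formula for reduced homology, $\widetilde{H}_k(S^{n} * Y) \cong \widetilde{H}_{k-n-1}(Y)$ (which, with the degenerate conventions $S^{-1}=\emptyset$ and $\emptyset * Y = Y$, also covers the vertex case), to conclude
\[
H_i(||\Delta||,\,||\Delta||-p) \;\cong\; \widetilde{H}_{i-1}\bigl(S^{|F|-2} * \lk_\Delta(F)\bigr) \;\cong\; \widetilde{H}_{i-|F|}(\lk_\Delta(F)).
\]
The only point requiring real care is the deformation retraction step: one must verify that the radial retraction of $||F||\setminus\{p\}$ onto $\partial||F||$ extends continuously across the join parameter $t=0$, which it does because at $t=0$ the $||F||$-coordinate is collapsed. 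All other steps are formal consequences of excision, contractibility of the star, and the join formula, so this is the single place where the geometry of the simplex and the hypothesis that $p$ lies in the \emph{relative interior} of $||F||$ genuinely enter.
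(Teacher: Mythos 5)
The paper itself does not prove this lemma --- it is quoted from Munkres --- so there is no internal proof to compare against; your argument is the standard one and is essentially the proof in Munkres: excise down to the closed star, identify it as the join $||F|| * \lk_{\Delta}(F)$, retract the punctured star onto $\partial||F|| * \lk_{\Delta}(F)$, and apply the join/suspension formula. The excision step (you correctly identify that the only condition to check is that $p$ avoids the closure of the excised set, which holds because $p$ lies in the open star) and the index bookkeeping, $\widetilde{H}_{i-1}(S^{|F|-2}*Y) \cong \widetilde{H}_{i-|F|}(Y)$ including the vertex case via $S^{-1}=\emptyset$, are all correct.

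The one genuine flaw is in the deformation retraction, and it is not at the locus you flagged. Writing points of $V = ||F|| * \lk_{\Delta}(F)$ as $tx + (1-t)y$, the set $V - p$ contains every point with $x = p$ and $0 < t < 1$ (the punctured segments joining $p$ to points of the link), and on these your map ``push $x$ radially away from $p$, keeping $y$ and $t$ fixed'' is undefined: the radial projection of $||F||\setminus\{p\}$ onto $\partial||F||$ has no value at $x = p$. The degeneration at $t=0$ that you checked is harmless; the problem occurs at interior values of $t$. The standard fix is to use associativity of the join: since $p$ lies in the relative interior of $||F||$, one has $||F|| = p * \partial||F||$, hence $V = p * \bigl(\partial||F|| * \lk_{\Delta}(F)\bigr)$ is a cone with cone point $p$. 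Writing points as $up + (1-u)w$ with $w \in \partial||F|| * \lk_{\Delta}(F)$, the retraction pushes $u \to 0$ --- so the join parameter moves, not just the $||F||$-coordinate --- and this is defined and continuous on all of $V-p$ because $u = 1$ only at $p$ itself. With that replacement your proof is complete.
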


When $||\Delta||$ is a $d$-dimensional topological manifold (without boundary), we know that the relative homology groups of the pair $(||\Delta||, ||\Delta||-p)$ are isomorphic to those of a $(d-1)$-sphere.  We use this to motivate the following definition. 

\begin{definition}
Let $\mathbf{k}$ be a field or the ring of integers, and let $\Delta$ be a $d$-dimensional simplicial complex.  We say that $\Delta$ is a $\mathbf{k}$-\textit{homology manifold} if 
\begin{displaymath}
\widetilde{H}_i(\lk_{\Delta}(F);\mathbf{k}) \cong 
\begin{cases}
\mathbf{k} & \text{ if $i = d-|F|$,} \\
0 & \text{otherwise,}
\end{cases}
\end{displaymath}
for all nonempty faces $F \in \Delta$.  If, in addition, $||\Delta||$ has the homology of $\mathbb{S}^{d}$ over $\mathbf{k}$, we say that $\Delta$ is a $\mathbf{k}$-\textit{homology sphere}.
\end{definition}

Let $\Delta$ be a $d$-dimensional simplicial complex.  We define the $f$-numbers of $\Delta$ as $f_{i}(\Delta) := \#\{F \in \Delta: \dim F = i\}$ for $-1 \leq i \leq d$.  It is often more convenient to study a certain integer transformation of the $f$-numbers called the $h$-numbers of $\Delta$, which are defined by $$h_j(\Delta):=\sum_{i=0}^j(-1)^{j-i}{d+1-i \choose d+1-j}f_{i-1}(\Delta),$$ for all $0 \leq j \leq d+1$.  One can check that the $f$-numbers of $\Delta$ are related to its $h$-numbers by $f_{i-1}(\Delta) = \sum_{j=0}^i{d+1-j \choose d+1-i}h_j(\Delta)$ so knowing the $f$-numbers of $\Delta$ is equivalent to knowing its $h$-numbers. 

In general, there is no reason to expect the $h$-numbers of a simplicial complex to be nonnegative (and in fact, in general they are not); however, Stanley \cite{Stanley1975} showed that the $h$-numbers of a certain family of simplicial complexes known as \textit{Cohen-Macaulay} complexes are nonnegative.  We will avoid giving the precise algebraic definition of a Cohen-Macaulay simplicial complex here because we will use the following theorem of Reisner to give a topological definition that is more suitable for this paper. 

\begin{theorem}{\rm{(Reisner's criterion \cite[Theorem 1]{Reisner1976})}}\label{Reisners-criterion}
Let $\Delta$ be a $d$-dimensional simplicial complex and let $\mathbf{k}$ be a field.  Then $\Delta$ is Cohen-Macaulay over $\mathbf{k}$ if and only if $\widetilde{H}_i(\lk_{\Delta}(F);\mathbf{k}) = 0$ for all $i < d-|F|$ and all faces $F \in \Delta$ (including $F = \emptyset$).
\end{theorem}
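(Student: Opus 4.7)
The plan is to derive Reisner's criterion from the algebraic definition of Cohen-Macaulayness for the Stanley-Reisner ring together with Hochster's formula for its local cohomology. Set $\mathbf{k}[\Delta] = \mathbf{k}[x_1,\ldots,x_n]/I_\Delta$, where $n = f_0(\Delta)$ and $I_\Delta$ is the squarefree monomial ideal whose generators are the monomials indexed by minimal non-faces of $\Delta$. The Krull dimension of $\mathbf{k}[\Delta]$ equals $d+1$, and $\Delta$ is Cohen-Macaulay over $\mathbf{k}$ precisely when $\mathbf{k}[\Delta]$ is a Cohen-Macaulay ring. By Grothendieck's vanishing and non-vanishing theorem, this is equivalent to the assertion
$$H^i_{\mathfrak{m}}(\mathbf{k}[\Delta]) = 0 \quad \text{for all } i < d+1,$$
where $\mathfrak{m} = (x_1,\ldots,x_n)$ is the irrelevant maximal ideal.

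The key step is to invoke Hochster's formula, which describes these local cohomology modules very explicitly in the natural $\mathbb{Z}^n$-grading. For each multidegree $\mathbf{a} \in \mathbb{Z}^n$, the piece $H^i_{\mathfrak{m}}(\mathbf{k}[\Delta])_{\mathbf{a}}$ vanishes unless $\mathbf{a} \leq 0$ and the subset $F := \{j : a_j < 0\}$ is a face of $\Delta$, in which case
$$\dim_{\mathbf{k}} H^i_{\mathfrak{m}}(\mathbf{k}[\Delta])_{\mathbf{a}} = \dim_{\mathbf{k}} \widetilde{H}^{i-|F|-1}(\lk_\Delta(F);\mathbf{k}).$$
Since $\mathbf{k}$ is a field, universal coefficients allow me to replace cohomology by homology of the same degree without changing dimensions. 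Thus $H^i_{\mathfrak{m}}(\mathbf{k}[\Delta]) = 0$ for every $i < d+1$ if and only if $\widetilde{H}_{i-|F|-1}(\lk_\Delta(F);\mathbf{k}) = 0$ for every face $F \in \Delta$ and every $i < d+1$. Reindexing by $j = i - |F| - 1$, this condition becomes $\widetilde{H}_j(\lk_\Delta(F);\mathbf{k}) = 0$ for $j < d - |F|$, which is exactly the statement of Reisner's criterion. Note that the case $F = \emptyset$ is included automatically, giving the condition on $\Delta = \lk_\Delta(\emptyset)$ itself.

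The main obstacle is the proof of Hochster's formula. I would compute $H^\bullet_{\mathfrak{m}}(\mathbf{k}[\Delta])$ using the \v{C}ech complex on the variables $x_1, \ldots, x_n$, which inherits the $\mathbb{Z}^n$-grading because every relevant localization of $\mathbf{k}[\Delta]$ does. A careful bookkeeping argument shows that, in any fixed multidegree $\mathbf{a} \leq 0$ with negative support $F \in \Delta$, the graded piece of the \v{C}ech complex is isomorphic, up to a uniform cohomological shift, to the augmented simplicial cochain complex of $\lk_\Delta(F)$: a localization indexed by a subset $S \supseteq F$ corresponds to the face $S \setminus F$ of $\lk_\Delta(F)$, and the \v{C}ech differentials match the simplicial coboundary maps under this correspondence. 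Multidegrees not of the above form yield acyclic pieces, either because the associated subset is not a face of $\Delta$ (so the corresponding localization vanishes) or because the positive part of $\mathbf{a}$ forces the \v{C}ech differential to have a null-homotopy. Taking cohomology in each fixed multidegree then yields the displayed identification with reduced cohomology of the link, and combining with the first two steps completes the proof.
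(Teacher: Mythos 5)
The paper does not prove this statement at all: Reisner's criterion is quoted as an external result, cited to Reisner's 1976 paper, and is used only as a black box to conclude that homology spheres and vertex links of homology manifolds are Cohen--Macaulay. So there is no internal proof to compare against; your argument has to be judged on its own. It is correct and is the standard modern route: reduce Cohen--Macaulayness of $\mathbf{k}[\Delta]$ to the vanishing of $H^i_{\mathfrak{m}}(\mathbf{k}[\Delta])$ for $i<d+1$ via Grothendieck's vanishing and non-vanishing theorems, then apply Hochster's multigraded formula to translate each graded piece into reduced (co)homology of a link, and reindex. Your bookkeeping is right: the Krull dimension is $d+1$, the shift $j=i-|F|-1$ turns $i<d+1$ into $j<d-|F|$, and working over a field lets you pass between $\widetilde{H}^j$ and $\widetilde{H}_j$; note that the bound $d-|F|$ (rather than $\dim\lk_\Delta(F)$) is exactly what this derivation produces, matching the paper's phrasing and implicitly forcing purity. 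The only part left as a sketch is Hochster's formula itself, whose \v{C}ech-complex proof you outline accurately (localizations $\mathbf{k}[\Delta]_{x_S}$ vanish unless $S\in\Delta$, a fixed multidegree $\mathbf{a}\le 0$ with negative support $F$ yields the augmented cochain complex of $\lk_\Delta(F)$, and any positive coordinate gives an acyclic piece); this is a genuinely substantial ingredient, but it is standard and your reliance on it is legitimate. For what it is worth, Reisner's original argument was organized differently (around depth, local cohomology computed more directly, and a then-unpublished result of Hochster), so your write-up reflects the streamlined textbook proof rather than the historical one.
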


In particular, a $\mathbf{k}$-homology sphere is Cohen-Macaulay over $\mathbf{k}$.  By studying an associated structure known as the face ring of a simplicial complex, Stanley proved that the $h$-numbers of Cohen-Macaulay complexes are nonnegative.

\begin{theorem}{\cite[Corollary 4.3]{Stanley1975}}\label{h-nums-nonnegative}
Let $\Delta$ be a Cohen-Macaulay simplicial complex of dimension $d$.  Then $h_j(\Delta) \geq 0$ for all $0 \leq j \leq d+1$. 
\end{theorem}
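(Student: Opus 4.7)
The plan is to study the Stanley-Reisner face ring $\mathbf{k}[\Delta] = \mathbf{k}[x_1,\ldots,x_n]/I_\Delta$, where $n = f_0(\Delta)$ and $I_\Delta$ is the squarefree monomial ideal generated by monomials whose support is a minimal non-face of $\Delta$. This is a graded $\mathbf{k}$-algebra of Krull dimension $d+1$, and Reisner's criterion (Theorem \ref{Reisners-criterion}) is precisely the statement that $\mathbf{k}[\Delta]$ is Cohen-Macaulay in the algebraic sense. The first step is to compute its Hilbert series combinatorially: a $\mathbf{k}$-basis of $\mathbf{k}[\Delta]$ consists of monomials whose support is a face of $\Delta$, so summing a geometric series over each face yields
\begin{equation*}
H(\mathbf{k}[\Delta];t) \;=\; \sum_{F \in \Delta} \left(\frac{t}{1-t}\right)^{|F|} \;=\; \frac{1}{(1-t)^{d+1}}\sum_{i=-1}^d f_i(\Delta)\, t^{i+1}(1-t)^{d-i}.
\end{equation*}
Expanding the binomial factor, the numerator on the right is exactly $\sum_{j=0}^{d+1} h_j(\Delta)\, t^j$, matching the definition of the $h$-numbers.

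Next, after passing to an infinite extension of $\mathbf{k}$ if necessary (which changes neither the $h$-numbers nor the Cohen-Macaulay property), I would invoke Noether normalization to choose a linear system of parameters $\theta_1,\ldots,\theta_{d+1}$, that is, sufficiently generic degree-one elements, so that $\mathbf{k}[\Delta]$ becomes a finitely generated module over the polynomial subring $A := \mathbf{k}[\theta_1,\ldots,\theta_{d+1}]$. The crucial algebraic input is that a graded Cohen-Macaulay ring is necessarily \emph{free} as a module over any homogeneous system of parameters. Writing $\Theta$ for the ideal generated by the $\theta_i$, this freeness gives an isomorphism of graded $A$-modules $\mathbf{k}[\Delta] \cong A \otimes_{\mathbf{k}} \bigl(\mathbf{k}[\Delta]/\Theta \mathbf{k}[\Delta]\bigr)$.

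Translating this isomorphism into Hilbert series, we obtain
\begin{equation*}
H(\mathbf{k}[\Delta];t) \;=\; \frac{H\bigl(\mathbf{k}[\Delta]/\Theta \mathbf{k}[\Delta];\, t\bigr)}{(1-t)^{d+1}},
\end{equation*}
and comparing with the first computation shows $H\bigl(\mathbf{k}[\Delta]/\Theta \mathbf{k}[\Delta];\, t\bigr) = \sum_{j=0}^{d+1} h_j(\Delta)\, t^j$. Since $\mathbf{k}[\Delta]/\Theta \mathbf{k}[\Delta]$ is a finite-dimensional graded $\mathbf{k}$-algebra, each coefficient in its Hilbert series is the $\mathbf{k}$-dimension of a graded component, yielding $h_j(\Delta) = \dim_{\mathbf{k}}\bigl(\mathbf{k}[\Delta]/\Theta \mathbf{k}[\Delta]\bigr)_j \geq 0$, as desired.

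The main obstacle is importing the algebraic fact that Cohen-Macaulayness is equivalent to freeness over any homogeneous system of parameters; this is the deep step, resting on the standard commutative algebra notions of depth, regular sequences, and the graded Auslander-Buchsbaum formula. Everything else — the Hilbert series computation, the existence of a linear system of parameters via Noether normalization, and the final translation to nonnegativity of dimensions of vector spaces — is essentially bookkeeping once that algebraic bridge is in hand.
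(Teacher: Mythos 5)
Your argument is correct and is precisely the face-ring proof the paper attributes to Stanley: the Hilbert series computation identifying the $h$-polynomial as the numerator of $H(\mathbf{k}[\Delta];t)$, followed by freeness of a Cohen--Macaulay graded ring over a linear system of parameters, so that each $h_j$ is the dimension of a graded piece of the Artinian reduction $\mathbf{k}[\Delta]/\Theta\mathbf{k}[\Delta]$. The paper itself only cites this result, but your write-up matches the standard argument in \cite[Corollary 4.3]{Stanley1975}, including the necessary care about enlarging the field to guarantee a linear system of parameters.
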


Now suppose $\Delta$ is a $\mathbf{k}$-homology manifold.  By Reisner's criterion, the link of any vertex $v \in \Delta$ is Cohen-Macaulay over $\mathbf{k}$, and hence the $h$-numbers of $\lk_{\Delta}(v)$ are nonnegative by Theorem~\ref{h-nums-nonnegative}.  Hersh and Novik \cite{HershNovik2002} defined the \textit{short simplicial $h$-numbers} of a simplicial complex by $$\widetilde{h}_j(\Delta):=\sum_{v \in V(\Delta)}h_j(\lk_{\Delta}(v)).$$  It follows that $\widetilde{h}_j(\Delta) \geq 0$ for all $j$ when $\Delta$ is a simplicial (homology) manifold.  Just as the $f$-numbers of a simplicial complex can be written as nonnegative integer combinations of its $h$-numbers, the following result shows that the $f$-numbers can be expressed in terms of the short simplicial $h$-numbers as well.

\begin{proposition}{\rm{(\cite[Lemma 1(i)]{HershNovik2002})}}
Let $\Delta$ be a pure $d$-dimensional simplicial complex.  Then 
\begin{equation}\label{simplicial:f-to-short-h}
(i+1)f_i(\Delta) = \sum_{j=0}^i{d-j \choose d-i}\widetilde{h}_j(\Delta),
\end{equation}
for all $0 \leq i \leq d$.
\end{proposition}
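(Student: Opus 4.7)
The plan is to reduce the identity to the standard $f$-to-$h$ conversion, applied vertex by vertex in the link, and then count incidences.

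First, I would observe that since $\Delta$ is pure of dimension $d$, the link $\lk_\Delta(v)$ of any vertex $v$ is a pure simplicial complex of dimension $d-1$. Applying the relationship
$$f_{i-1}(\Sigma) = \sum_{j=0}^{i} \binom{e+1-j}{e+1-i} h_j(\Sigma)$$
(stated just after the definition of $h$-numbers in the excerpt) to $\Sigma = \lk_\Delta(v)$ with $e = d-1$, one obtains
$$f_{i-1}(\lk_\Delta(v)) = \sum_{j=0}^{i} \binom{d-j}{d-i} h_j(\lk_\Delta(v)).$$

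Next, I would sum over all $v \in V(\Delta)$. Interchanging the two sums and invoking the definition $\widetilde{h}_j(\Delta) = \sum_{v} h_j(\lk_\Delta(v))$ immediately gives
$$\sum_{v \in V(\Delta)} f_{i-1}(\lk_\Delta(v)) = \sum_{j=0}^{i} \binom{d-j}{d-i} \widetilde{h}_j(\Delta),$$
which is exactly the right-hand side of the proposition.

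The remaining step, and the only combinatorial content beyond bookkeeping, is to identify the left-hand side as $(i+1) f_i(\Delta)$. Here I would use the standard incidence-counting argument: an $(i-1)$-face of $\lk_\Delta(v)$ is precisely a set $G$ of size $i$ with $v \notin G$ and $\{v\} \cup G \in \Delta$, which corresponds to an $i$-dimensional face $F = \{v\} \cup G$ of $\Delta$ containing $v$. Summing over all vertices therefore counts each $i$-face of $\Delta$ once for each of its $i+1$ vertices, yielding $\sum_v f_{i-1}(\lk_\Delta(v)) = (i+1) f_i(\Delta)$. Combined with the displayed equation above, this gives the proposition.

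There is no real obstacle: the proof is essentially a one-line double-counting argument layered on top of the dimension shift $d \mapsto d-1$ in the usual $f$-$h$ conversion. The only thing to watch is that the dimension of the link is $d-1$ and not $d$, so that the binomial coefficient $\binom{(d-1)+1-j}{(d-1)+1-i} = \binom{d-j}{d-i}$ comes out with exactly the indices appearing in \eqref{simplicial:f-to-short-h}.
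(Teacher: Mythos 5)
Your proof is correct. The paper does not actually prove this proposition; it simply cites it as Lemma 1(i) of Hersh and Novik, so there is no internal argument to compare against. Your derivation --- applying the $f$-to-$h$ inversion in each vertex link (which is pure of dimension $d-1$ by purity of $\Delta$, giving the binomial $\binom{d-j}{d-i}$), summing over vertices to produce $\widetilde{h}_j(\Delta)$, and identifying $\sum_v f_{i-1}(\lk_\Delta(v)) = (i+1)f_i(\Delta)$ by counting vertex--face incidences --- is exactly the standard argument and is essentially the one in the cited source. The one point you rightly flag, the dimension shift $d\mapsto d-1$ in the link, is handled correctly, and the purity hypothesis is used precisely where it is needed (to guarantee every vertex link has dimension $d-1$).
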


\subsection{Cubical complexes}
Let $I^d$ denote the standard cube $[-1,1]^d$ in $\mathbb{R}^d$.  A \textit{cubical complex} $\mathcal{K}$ on the (finite) vertex set $V = V(\mathcal{K})$ is a collection of subsets of $V$, partially ordered by inclusion, satisfying the following properties: 
\begin{enumerate}
\item $\mathcal{K}$ has a minimal element, usually denoted $\emptyset$.
\item For all $v \in V$, the singleton $\{v\} \in \mathcal{K}$.
\item For any nonempty $F \in \mathcal{K}$, the interval $[\emptyset, F] = \{G \in \mathcal{K}: \emptyset \subseteq G \subseteq F\}$ is isomorphic to the face poset of a cube of some dimension.
\item If $F,F' \in \mathcal{K}$, then $F \cap F'$ is a face of $\mathcal{K}$. \label{lattice-condition}
\end{enumerate}

Once again, the elements $F \in \mathcal{K}$ are called faces.  If $[\emptyset, F]$ is isomorphic to the face poset of $I^i$, then we say $F$ is a $i$-dimensional face of $\mathcal{K}$.  This makes $\mathcal{K}$ a graded poset if we declare that an $i$-dimensional face of $\mathcal{K}$ has rank $i+1$; and, in fact, a graded lattice by condition (\ref{lattice-condition}) of the definition of a cubical complex. 

The \textit{link} of a face $F \in \mathcal{K}$ is $\lk_{\mathcal{K}}(F) = \{G \in \mathcal{K}: G \supseteq F\}$, with minimal element $F$.  If $\mathcal{K}$ is a pure $d$-dimensional cubical complex and $F \in \mathcal{K}$ is a nonempty face of dimension $i$, then $\lk_{\mathcal{K}}(F)$ is a pure \textit{simplicial} complex of dimension $d-i-1$.  Once again, we say that a pure $d$-dimensional cubical complex $\mathcal{K}$ is a $\mathbf{k}$\textit{-homology manifold} if the link of any nonempty face $F \in \mathcal{K}$ is a $\mathbf{k}$-homology sphere of dimension $d-\dim F - 1$.  

Adin~\cite{Adin1996} defined the \textit{short cubical} $h$-numbers of a $d$-dimensional cubical complex $\mathcal{K}$ by $$h_j\scc(\mathcal{K}) = \sum_{v \in V(\mathcal{K})}h_j(\lk_{\mathcal{K}}(v)),$$ where $h_j(\lk_{\Delta}(v))$ is the ordinary simplicial $h$-number of the simplicial complex $\lk_{\mathcal{K}}(v)$. Despite our presentation here, Adin's short cubical $h$-vector historically preceded (and motivated) Hersh and Novik's short simplicial $h$-vector.

Once again, $h_j\scc(\mathcal{K}) \geq 0$ for all $0 \leq j \leq d$ when $\mathcal{K}$ is a cubical $\mathbf{k}$-homology manifold since the link of each of its vertices is Cohen-Macaulay over $\mathbf{k}$.  Further, the $f$-numbers of a cubical complex can be expressed as a nonnegative linear combination of its short cubical $h$-numbers. 

\begin{proposition}{\rm{(\cite[Lemma 1(iii)]{Adin1996})}}
Let $\mathcal{K}$ be a $d$-dimensional cubical complex.  Then $$2^if_i(\mathcal{K}) = \sum_{j=0}^i{d-j \choose d-i}h_j\scc(\mathcal{K}),$$ for all $0 \leq i \leq d$.
\end{proposition}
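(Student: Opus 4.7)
The plan is to derive the identity by summing a known $f$-versus-$h$ relation over the vertex links of $\mathcal{K}$ and then double-counting incidences between vertices and $i$-cubes. The input is the standard simplicial identity
\[
f_{i-1}(\Delta) = \sum_{j=0}^{i}\binom{d-j}{d-i}h_j(\Delta),
\]
valid for any $(d-1)$-dimensional simplicial complex $\Delta$; this is the inverse of the defining formula for $h_j(\Delta)$ recorded earlier in the excerpt. For each vertex $v\in V(\mathcal{K})$ the link $\lk_{\mathcal{K}}(v)$ is a pure simplicial complex of dimension $d-1$, so the identity applies with $\Delta=\lk_{\mathcal{K}}(v)$.

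Next I would sum over all vertices. The right-hand side becomes
\[
\sum_{v\in V(\mathcal{K})}\sum_{j=0}^i\binom{d-j}{d-i}h_j(\lk_{\mathcal{K}}(v)) = \sum_{j=0}^i\binom{d-j}{d-i}h_j\scc(\mathcal{K}),
\]
directly by the definition $h_j\scc(\mathcal{K})=\sum_{v}h_j(\lk_{\mathcal{K}}(v))$. The left-hand side becomes $\sum_{v}f_{i-1}(\lk_{\mathcal{K}}(v))$.

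The key combinatorial step is to identify this sum with $2^i f_i(\mathcal{K})$. An $(i-1)$-dimensional simplex of $\lk_{\mathcal{K}}(v)$ corresponds by definition to a face $F$ of $\mathcal{K}$ properly containing $v$ such that $[\emptyset,F]$ is the face poset of $I^i$, i.e.\ to an $i$-dimensional cubical face of $\mathcal{K}$ containing $v$ as a vertex. Therefore $\sum_{v}f_{i-1}(\lk_{\mathcal{K}}(v))$ counts pairs $(v,F)$ with $F$ an $i$-face of $\mathcal{K}$ and $v$ a vertex of $F$. Since each $i$-dimensional cube has exactly $2^i$ vertices, this count equals $2^i f_i(\mathcal{K})$, and combining with the previous display yields the stated identity.

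The only real subtlety is the last combinatorial step, and even that is routine once one verifies that the bijection between $(i-1)$-simplices of $\lk_{\mathcal{K}}(v)$ and $i$-cubes through $v$ is a genuine bijection; this follows from axioms (3) and (4) in the definition of a cubical complex, which guarantee that the interval above $v$ in $\mathcal{K}$ is a cubical lattice and, hence, that its atoms above $v$ (i.e.\ edges at $v$) generate distinct cubes in one-to-one correspondence with the simplices of $\lk_{\mathcal{K}}(v)$.
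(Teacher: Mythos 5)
Your proof is correct. The paper does not prove this proposition---it simply cites Adin's Lemma 1(iii)---and your argument is the standard derivation of that result: apply the simplicial $f$-to-$h$ inversion to each vertex link (a $(d-1)$-dimensional simplicial complex), sum over vertices to produce $h_j\scc$, and identify $\sum_v f_{i-1}(\lk_{\mathcal K}(v))$ with $2^i f_i(\mathcal K)$ by double-counting vertex--$i$-cube incidences. The final bijection is essentially tautological given the paper's definition of $\lk_{\mathcal K}(v)$ as the upper interval above $v$, so no gap remains.
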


\subsection{Euler Characteristic}
The Euler characteristic of a finite cell complex $X$ is defined to be
\[
\chi(X) = \sum_{i}(-1)^if_{i}(X).
\]
We will use a few simple facts. For example, the product formula $\chi(X\times Y)=\chi(X)\chi(Y)$, and the sum formula $\chi(X\cup Y)=\chi(X)+\chi(Y)-\chi(X\cap Y)$. If $p:X\to Y$ is an $n$ to $1$ covering map, then $\chi(X)=n\cdot\chi(Y)$. The Euler characteristic is a homotopy invariant. If $X$ is a compact odd-dimensional manifold without boundary, then $\chi(X)=0$, which follows from Poincar\'e duality. Finally, if $X$ is a compact odd-dimensional manifold with boundary, then $\chi(X)=\frac12\chi(\partial X)$.
\subsection{Asphericity}
We will use throughout the following theorem of J.~H.~C.~Whitehead. Recall that a path-connected space is aspherical if its higher homotopy groups $\pi_{k}(X)=0$ for $k\ge 2$.
\begin{theorem}[Whitehead \cite{Whitehead1939}]
Suppose a cell complex $X$ can be expressed as the union of two aspherical subcomplexes $X_{1}$ and $X_{2}$ such that each component of the intersection $X_{12}$ is aspherical and the inclusion induced homomorphisms $\pi_{1}(X_{12},x_{0})\to \pi_{1}(X_1,x_{0})$ and $\pi_{1}(X_{12},x_{0})\to \pi_{1}(X_2,x_{0})$ are injective for all choices of base point $x_{0}\in X_{12}$. Then $X$ is aspherical.
\end{theorem}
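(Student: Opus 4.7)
The plan is to show that the universal cover $\tilde X$ of $X$ is contractible; since asphericity is equivalent to contractibility of the universal cover, this suffices. First I would invoke the Seifert--van Kampen theorem. Because every component of $X_{12}$ injects at the level of fundamental groups into both $\pi_1(X_1)$ and $\pi_1(X_2)$, one can identify $\pi_1(X)$ with the fundamental group of a graph of groups whose vertex groups are the $\pi_1(X_i)$ and whose edge groups are the $\pi_1(C)$ for components $C$ of $X_{12}$. A standard consequence (Bass--Serre theory, or a direct normal-form argument in the amalgamated-product / HNN case) is that the inclusions $\pi_1(X_i)\hookrightarrow \pi_1(X)$ and $\pi_1(C)\hookrightarrow \pi_1(X)$ are themselves injective.

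Second, I would analyze the covering-space picture. Write $\tilde X_i := p^{-1}(X_i)$ and $\tilde X_{12} := p^{-1}(X_{12})$ for the universal covering projection $p\colon \tilde X\to X$. Each connected component of $\tilde X_i$ is a connected cover of $X_i$ whose subgroup of deck transformations is the intersection of $\pi_1(X_i)$ with the stabilizer of that component inside $\pi_1(X)$. By the injectivity established in the previous step this subgroup is trivial, so every component of $\tilde X_i$ is in fact the universal cover of $X_i$, hence contractible because $X_i$ is aspherical. The same argument, applied component by component (this is where the hypothesis on all base points in $X_{12}$ is used), shows that every component of $\tilde X_{12}$ is the universal cover of a component of $X_{12}$, and so is likewise contractible.

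Finally, I would conclude by a Mayer--Vietoris argument. The space $\tilde X$ is simply connected by definition, and the Mayer--Vietoris sequence applied to the decomposition $\tilde X = \tilde X_1 \cup \tilde X_2$ with intersection $\tilde X_{12}$ (each now a disjoint union of contractible pieces) yields $\widetilde H_*(\tilde X;\mathbb Z) = 0$. By the Hurewicz theorem and the Whitehead theorem for CW complexes, $\tilde X$ is contractible, so $X$ is aspherical.

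The main obstacle is the second step: one must convert the injectivity hypothesis, which only sees components of $X_{12}$, into the stronger statement that every component of $\tilde X_i$ really is the full universal cover of $X_i$. The graph-of-groups description from the first step is what makes this bookkeeping go through cleanly; without it, components of $\tilde X_i$ could a priori be only intermediate covers, and the Mayer--Vietoris computation would fail.
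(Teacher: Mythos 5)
Your argument is correct and follows exactly the route the paper sketches: decompose the universal cover into pieces that are universal covers of $X_1$, $X_2$, and the components of $X_{12}$ (hence contractible), then kill the homology by Mayer--Vietoris and conclude by Hurewicz and Whitehead. The paper leaves the Bass--Serre/van Kampen injectivity step implicit, and you have correctly identified it as the point that makes the covering-space bookkeeping work.
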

The idea of the proof is that the universal covering of $X$ can be constructed from contractible pieces with pairwise intersections contractible. From this information one can conclude that the homology groups and higher homotopy groups are trivial. The result follows.

\section{Hyperbolization techniques} \label{sec:define-hyperbolizations}

In this section we will define the various hyperbolization techniques that will be studied in the remainder of the paper.  

\subsection{The M\"obius band hyperbolization}
The central idea behind  this technique is the following observation of Gromov \cite{Gromov1987}. 

\begin{proposition}
Let $M$ be an aspherical $d$-manifold without boundary that admits a fixed-point free involution.  Then there is an aspherical $(d+1)$-manifold $W$ with $\partial W = M$, such that $\pi_{1}(M)\to \pi_{1}(W)$ is injective.  
\end{proposition}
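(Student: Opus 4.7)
The construction I would use is the twisted $I$-bundle associated to the involution. Write $\tau: M \to M$ for the free involution, and define
\[
W \;:=\; (M \times [-1,1]) \big/ \sim, \qquad (x,t)\sim(\tau(x),-t).
\]
(When $M = S^1$ and $\tau$ is the antipodal map this is precisely the M\"obius band, which accounts for the name of the hyperbolization.) The plan is to check that this $W$ has the four required properties: it is a manifold, its boundary is $M$, it is aspherical, and the inclusion $M \hookrightarrow W$ is $\pi_1$-injective.

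First I would verify that the identification is the quotient by a free involution on $M \times [-1,1]$. The involution $\sigma(x,t) = (\tau(x),-t)$ is fixed-point-free: a fixed point would require $t=-t$, so $t=0$, and simultaneously $\tau(x)=x$, contradicting the hypothesis that $\tau$ is free. Since $\sigma$ is a free involution on a $(d+1)$-manifold with boundary, $W$ is a $(d+1)$-manifold with boundary. Next, the boundary of $M\times[-1,1]$ is $M\times\{-1\} \sqcup M\times\{1\}$, and $\sigma$ interchanges these two copies. Hence $\partial W$ is a single copy of $M$, giving $\partial W \cong M$ as required.

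For asphericity, I would use the covering space description. The quotient map $p: M\times[-1,1] \to W$ is a $2$-to-$1$ covering. Because $M$ is aspherical and $[-1,1]$ is contractible, $M\times[-1,1]$ is aspherical, so its universal cover is contractible. But the universal cover of $W$ coincides with the universal cover of $M\times[-1,1]$ (since $p$ is a covering map), so $W$ is aspherical as well.

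Finally, for $\pi_1$-injectivity, the boundary inclusion $i: M \hookrightarrow W$ factors as $M \xrightarrow{\cong} M\times\{1\} \hookrightarrow M\times[-1,1] \xrightarrow{p} W$. The middle inclusion is a homotopy equivalence, and the covering map $p$ induces an injection on $\pi_1$. Composing gives injectivity of $i_\ast: \pi_1(M) \to \pi_1(W)$. The one point that deserves a brief sanity check is that the $\pi_1$ statement uses the basepoint in $\partial W$ rather than an interior basepoint, but this is harmless since $W$ is path-connected. I do not expect a serious obstacle in any of these steps; the construction is essentially forced, and the main task is to package it clearly so that the subsequent inductive hyperbolization procedure has the right building block at each stage.
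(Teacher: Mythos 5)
Your construction is exactly the paper's: the quotient of $M\times[-1,1]$ by the free involution $(x,t)\mapsto(\tau(x),-t)$, with asphericity deduced from the fact that the contractible universal cover of $M\times[-1,1]$ also covers $W$, and $\pi_1$-injectivity from the $2$-fold covering. The argument is correct and matches the paper's proof in all essentials.
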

\begin{proof}
Let $\varphi: M \rightarrow M$ be a fixed-point free involution on $M$.  We can extend $\varphi$ to a fixed-point free involution $\widehat{\varphi}$ on $M \times [-1,1]$ sending $(x,t) \mapsto (\varphi(x),-t)$.  We define $W$ to be the quotient of $M \times [-1,1]$ by the action of $\widehat{\varphi}$.  

We must show that $W$ is aspherical and $\partial W = M$.  Let $M'$ denote the universal cover of $M$.  Since $\widehat{\varphi}$ has no fixed points, the quotient map $q: M \times [-1,1] \rightarrow W$ is a covering map.  The universal covering space of $M \times [-1,1]$ is $M' \times [-1,1]$, and hence the composition $$M' \times [-1,1] \rightarrow M \times [-1,1] \stackrel{q}{\rightarrow} W,$$ is a covering map.  Since $M' \times [-1,1]$ is contractible (hence simply connected), it must be the universal covering space of $W$.  Thus $W$ is aspherical. 

Finally, the boundary of $M \times [-1,1]$ is $M \times \{-1\} \cup M \times \{1\}$, and hence the boundary of $W$ is canonically homeomorphic to $M$.

Up to homotopy the homomorphism $\pi_{1}(M)\to \pi_{1}(W)$ may be identified with the homomorphism $\pi_{1}(M)\to \pi_{1}(M/\varphi)$ induced by the 2-fold covering $M\to M/\varphi$, and hence is injective.
\end{proof}

\subsubsection{Cubical case}
This is the case Gromov \cite{Gromov1987} worked out.
For a given cubical complex $\mathcal{K}$, we will define a cell complex $\mathcal{M}(\mathcal{K})$ called the \textit{M\"obius band hyperbolization} of $\mathcal{K}$ by modifying the skeleta of $\K$, starting from the $2$-skeleton and inducting to the top-dimensional skeleton.  To do this, we will define the M\"obius band hyperbolizations of the cube $I^n$ and its boundary $\partial I^n$, and replace each face of $\K$ with its hyperbolization.   Having done this, we will be able to compute the Euler characteristic of the resulting complex and analyze it combinatorially.  We begin by following the presentation in \cite[Chapter 3.4]{Gromov1987}

We start with the $2$-cube $I^2$ and define its hyperbolization as $\mathcal{M}(I^2) = (\partial I^2 \times [-1,1])/\mathbb{Z}_2,$ where the $\mathbb{Z}_2$ action is defined by the involution $(t_1,t_2,t_3) \mapsto -(t_1,t_2,t_3)$.  Moreover, $\mathcal{M}(I^2)$ is the M\"obius band, and $\partial\mathcal{M}(I^2)$ is canonically isomorphic to $\partial I^2$.  Therefore, we hyperbolize the $2$-skeleton of $\K$ by replacing each $2$-face with a copy of $\mathcal{M}(I^2)$.  Since the vertices and edges of $\K$ are unchanged by this process, we define $\mathcal{M}(I^0) = I^0$ and $\mathcal{M}(I^1) = I^1$.

Inductively, suppose we have defined $\mathcal{M}(I^{n-1})$ and that we create $\mathcal{M}(\partial I^{n})$ by $\mathbb{Z}_2$-equivariantly inserting $\mathcal{M}(I^{n-1})$ for each of the $2n$ facets of $I^{n}$.
The antipodal involution on $\partial I^n$ then gives rise to a fixed-point free involution on $\mathcal{M}(\partial I^n)$. 
We define $\mathcal{M}(I^n) = (\mathcal{M}(\partial I^{n-1}) \times [-1,1])/(\mathbb{Z}_2)$ (and subsequently define $\mathcal{M}(\partial I^{n+1})$ by $\mathbb{Z}_2$-equivariantly inserting $\mathcal{M}(I^n)$ for each of the $2(n+1)$ facets of $I^{n+1}$). 

Making use of the contractibility of $I^{n}$ there is an inductively defined map $p_{n}: \mathcal{M}(I^{n})\to I^{n}$ such that the preimage of any face is precisely the hyperbolization of that face. In particular $\mathcal{M}(I^{n})$ has a face structure combinatorially equivalent to that of $I^{n}$.

If $\K$ is $n$-dimensional, then we define
$$ \mathcal{M}(\K) =  \bigcup_{\substack{F \in \K \\ \dim F = n}} \mathcal{M}(F)/\sim .$$ Here we identify the boundary facets of the $\mathcal{M}(F)$ in exactly the same pattern as the boundaries of the original facets $F$ were identified.

Alternatively one may view the result inductively as
$$ \mathcal{M}(\K) = \mathcal{M}(\K^{(n-1)}) \cup \bigcup_{\substack{F \in \K \\ \dim F = n}} \mathcal{M}(F),$$ where $\K^{(n-1)}$ denotes the $(n-1)$-skeleton of $\K$.

As a generalization of the M\"obius band hyperbolization, we can instead define $\mathcal{M}(I^2)$ to be any compact surface with a single boundary component with Euler characteristic $a \leq 0$, and otherwise continue as before.  We denote this hyperbolization by $\mathcal{M}_a(I^2)$, and, in general, $\mathcal{M}_{a}(\K)$. 

\begin{remark}
Although we do not need it here, one can systematically endow the hyperbolization $\mathcal{M}(\K)$ (as well as the other hyperbolizations discussed below) with a cubical structure. To do this properly one should cubically subdivide (as described in \ref{subsubsec:cubical}) the faces of $\K^{(n-1)}$, then extend over the the hyperbolized $n$-cells, etc.
\end{remark}

\subsubsection{Simplicial case}
We outline a similar treatment for the case of simplicial complexes.
For a given simplicial complex $\mathcal{K}$, we will  define a cell complex $\mathcal{M}(\mathcal{K})$ again called the \textit{M\"obius band hyperbolization} of $\mathcal{K}$ by modifying the skeleta of $\K$, starting from the $2$-skeleton and inducting to the top-dimensional skeleton.  To do this, we will inductively define the M\"obius band hyperbolizations of the $n$-simplex $\sigma^n$ and its boundary $\partial \sigma^n$, and replace each face of $\K$ with its hyperbolization.   The main difference between the simplicial case and the cubical case is that we must include a barycentric subdivision in the definition of the hyperbolized cells since the antipodal involution of the boundary of a simplex is not simplicial on the underlying simplex. This involution is defined on the vertices of the barycentric subdivision $(\sigma^{n})^{\prime}$ by $\hat{\mu}\mapsto \hat{\nu}$, where $\nu$ is the face of $\sigma^{n}$ complementary to the face $\mu$ and $\hat{\mu}$ denotes the barycenter of $\mu$.

We begin with the $2$-simplex $\sigma^2$ and define its hyperbolization as $\mathcal{M}(\sigma^2) = (\partial (\sigma{^2})^{\prime} \times [-1,1])/\mathbb{Z}_2,$ where the diagonal $\mathbb{Z}_2$ action is defined as above on the boundary of the simplex and by $t\mapsto -t$ on $[-1,1]$.  Again $\mathcal{M}(\sigma^2)$ is the M\"obius band, and $\partial\mathcal{M}(\sigma^2)$ is canonically isomorphic to $\partial (\sigma^2)^{\prime}$.  Therefore, we hyperbolize the $2$-skeleton of $\K$ by replacing each $2$-face with a copy of $\mathcal{M}(\sigma^2)$.  We simply define $\mathcal{M}(\sigma^0) = \sigma^0$ and $\mathcal{M}(\sigma^1) = (\sigma^1)'$.

Inductively, suppose we have defined $\mathcal{M}(\sigma^{n-1})$. We then create $\mathcal{M}(\sigma^{n})$ as follows. Replace each $(n-1)$-simplex of the barycentric subdivision $\partial(\sigma^{n})'$ with a copy of $\mathcal{M}(\sigma^{n-1})$, in a $\mathbb{Z}_{2}$ equivariant  way, so that the antipodal involution on $\partial (\sigma{^n})^{\prime}$ gives rise to a fixed-point free involution on $\mathcal{M}(\partial (\sigma^n)')$.  
We define $\mathcal{M}(\sigma^n) = (\mathcal{M}(\partial (\sigma{^{n-1}})') \times [-1,1])/(\mathbb{Z}_2)$.

Making use of the contractibility of $\sigma^{n}$ there is an inductively defined map $p_{n}: \mathcal{M}(\sigma^{n})\to \sigma^{n}$ such that the preimage of any face is precisely the hyperbolization of that face. In particular $\mathcal{M}(\sigma^{n})$ has a face structure combinatorially equivalent to that of $\sigma^{n}$.

If $\K$ is $n$-dimensional, then we define
$$ \mathcal{M}(\K) =  \bigcup_{\substack{F \in \K \\ \dim F = n}} \mathcal{M}(F)/\sim .$$ Here we identify the boundary facets of the $\mathcal{M}(F)$ in exactly the same pattern as the boundaries of the original facets $F$ were identified.

Alternatively we set
$$ \mathcal{M}(\K) = \mathcal{M}(\K^{(n-1)}) \cup \bigcup_{\substack{F \in \K \\ \dim F = n}} \mathcal{M}(F),$$ where $\K^{(n-1)}$ denotes the $(n-1)$-skeleton of $\K$.  

As before we can instead define $\mathcal{M}(\sigma^2)$ to be any compact surface with a single boundary component with Euler characteristic $a \leq 0$, and otherwise continue as above.  We denote this hyperbolization by $\mathcal{M}_a(\sigma^2)$, and, in general, $\mathcal{M}_{a}(\K)$.

\subsection{The Gromov hyperbolization}
The central idea behind  the present technique is the following observation of Gromov \cite{Gromov1987}. 

\begin{proposition}\label{prop:gromov}
Let $M$ be an aspherical $d$-manifold without boundary that admits an involution $r$ with fixed point set $B$ an aspherical $(d-1)$-manifold separating $M$ into two aspherical components, so that $M=A\cup_{B}r(A)$ with $A\cap r(A) =B$.  Assume $\pi_{1}(B)\to \pi_{1}(M)$ and $\pi_{1}(A)\to \pi_{1}(M)$ are injective. Then there is an aspherical $(d+1)$-manifold $W=\Delta(M)$ with $\partial W = M$ such that $\pi_{1}(M)\to \pi_{1}(W)$ is injective.  
\end{proposition}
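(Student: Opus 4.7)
Plan: Following the pattern of the preceding M\"obius proposition, I would set $W = (M \times [-1,1])/\widehat{r}$, where $\widehat{r}(x,t) = (r(x),-t)$. Although the involution $\widehat{r}$ fails to be free---its fixed set $B \times \{0\}$ has codimension two---the local action on the normal $2$-plane is rotation by $\pi$, and the topological quotient $\mathbb{R}^{2}/\mathbb{Z}_{2}$ is homeomorphic to $\mathbb{R}^{2}$ via $z \mapsto z^{2}$. Hence $W$ is still a topological $(d+1)$-manifold, and the identification $(x,-1) \sim (r(x),1)$ collapses the two boundary copies of $M$ into a single copy, giving $\partial W \cong M$.

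To verify asphericity I would apply the Whitehead theorem from Section~\ref{sec:prelims} with the decomposition $W = X_{1} \cup X_{2}$ in which $X_{1} = q(A \times [0,1])$ and $X_{2} = q(r(A)\times[0,1])$. Each $X_{i}$ is homeomorphic to a product $A \times [0,1]$ (respectively $r(A)\times[0,1]$) since the only identifications inside it are trivial, and it deformation retracts onto the aspherical space $A$. The intersection $X_{1}\cap X_{2}$ consists of the images of $B \times [0,1]$ and of $A \times \{0\}$ (the latter with $(a,0) \sim (r(a),0)$), which fit together as $A \cup_{B} (B \times [0,1])$ and deformation retract onto $A$. All the pieces are aspherical by hypothesis, and the $\pi_{1}$-injectivity conditions of Whitehead's theorem reduce, after retraction, to inclusions of $\pi_{1}(A)$ into itself, which are trivially injective.

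For the injectivity of $\pi_{1}(M) \to \pi_{1}(W)$, the strategy is to use the branched double covering $M \times [-1,1] \to W$ together with the hypotheses. The boundary $M \subset W$ lifts to $M \times \{1\} \subset M \times [-1,1]$ as an honest subspace of the cover, and a Van Kampen computation using the decomposition above, combined with the Bass--Serre splitting $\pi_{1}(M) = \pi_{1}(A) *_{\pi_{1}(B)} \pi_{1}(A)$, should exhibit $\pi_{1}(M)$ as a subgroup of $\pi_{1}(W)$. The injectivity hypotheses $\pi_{1}(A) \hookrightarrow \pi_{1}(M)$ and $\pi_{1}(B) \hookrightarrow \pi_{1}(M)$ are exactly what is needed to separate the two copies of $\pi_{1}(A)$ meeting along $\pi_{1}(B)$ and keep them disjoint inside $\pi_{1}(W)$.

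The hard part will be the injectivity step. A naive homotopy-theoretic retraction of $W$ onto its central slice gives $\pi_{1}(W) = \pi_{1}(A)$ and identifies the boundary map with the fold $q_{*}\colon \pi_{1}(A)*_{\pi_{1}(B)}\pi_{1}(A) \to \pi_{1}(A)$, which is not injective; thus a finer analysis at the branch locus $q(B \times \{0\})$---one that remembers the two-sheeted covering structure rather than collapsing to the topological retract---is essential, and this is precisely where the two $\pi_{1}$-injectivity hypotheses must be used in full force. The Whitehead-based asphericity argument, by contrast, is relatively routine once the decomposition is in place.
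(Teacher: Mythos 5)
Your construction is not the one that works, and the difficulty you flag in your last paragraph is not a gap to be closed by ``finer analysis at the branch locus'' --- it is fatal to the quotient construction itself. The space $W=(M\times[-1,1])/\widehat{r}$ with $\widehat{r}(x,t)=(r(x),-t)$ admits the deformation retraction $[(x,t)]\mapsto[(x,(1-s)t)]$ onto its central slice $(M\times\{0\})/r\cong M/r\cong A$. Hence $W\simeq A$ (so $W$ is indeed an aspherical manifold with $\partial W\cong M$), but the boundary inclusion $M=\partial W\hookrightarrow W$ is, up to homotopy, exactly the fold $A\cup_B r(A)\to A$, which on fundamental groups is the fold $\pi_1(A)*_{\pi_1(B)}\pi_1(A)\to\pi_1(A)$ and has nontrivial kernel whenever $\pi_1(B)\neq\pi_1(A)$. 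Since $\pi_1(W)$ and the homotopy class of the boundary inclusion are homotopy invariants, no analysis near $q(B\times\{0\})$ can change this. The base case already exhibits the failure: for $M=S^1$ with $r$ a reflection ($A$ an arc, $B$ two points), your $W$ is a disk, whereas the construction is required to produce the once-punctured torus $\mathcal{G}(\sigma^2)$ of Section \ref{sec:define-hyperbolizations}.

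The paper's $\Delta(M)$ is a genuinely different space: take $M\times[-1,1]$ and identify $(r(a),-1)\sim(r(a),1)$ for $a\in A$ \emph{only} --- the identity on $r(A)$, with no identifications in the interior and none on $A\times\{\pm1\}$. Equivalently, $W$ is $M\times S^1$ cut open along $A\times\{x_0\}$. Then $\partial W=A\cup_B A\cong M$ as before, but now $W$ is homotopy equivalent to $M$ with a handle $r(A)\times[-1,1]$ attached along two copies of the inclusion $r(A)\hookrightarrow M$, so $\pi_1(W)$ is the HNN extension of $\pi_1(M)$ over $\pi_1(A)$ (this is where the hypothesis that $\pi_1(A)\to\pi_1(M)$ is injective enters); asphericity follows from Whitehead's theorem applied to the pieces $M\times[-1,1]$ and $r(A)\times[-1,1]$, and injectivity of $\pi_1(\partial W)\to\pi_1(W)$ follows from Britton's lemma / the HNN version of van Kampen rather than from any branched double cover. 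Your Whitehead-style decomposition and your use of the splitting $\pi_1(M)=\pi_1(A)*_{\pi_1(B)}\pi_1(A)$ both transfer to this corrected construction; the quotient by $\widehat{r}$ does not.
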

\begin{proof}[Proof sketch]
We describe the manifold $W=\Delta(M)$ as $M\times [-1,1]/\sim$, where $(r(a),-1)\sim (r(a),1)$ for all $a\in A$. Alternatively $W$ is $M\times S^{1}$, cut open along $A\times \{x_{0}\}$. Then $\partial W=A\cup_{B} A\cong A\cup_{B} r(A) = M$. Up to homotopy we can think of $W$ as the union of $M\times [-1,1]$ and $r(A)\times [-1,1]$. Applying Whitehead's theorem we see that $W$ is aspherical. Finally, observe that $\pi_{1}(A\cup_{B} A)\to \pi_{1}(W)$ is injective by the HNN extension version of van Kampen's theorem. 
\end{proof}

\subsubsection{Simplicial case}
For the present hyperbolization Gromov described the main ideas in the simplicial case, where one takes advantage of barycentric subdivision. 

For a given simplicial complex $\mathcal{K}$, we  define a cell complex $\mathcal{G}(\mathcal{K})$ that we christen the \textit{Gromov hyperbolization} of $\mathcal{K}$ by modifying the skeleta of $\K$, starting from the $2$-skeleton and inducting to the top-dimensional skeleton.  To do this, we will inductively define the Gromov hyperbolizations of the $n$-simplex $\sigma^n$ and its boundary $\partial \sigma^n$, and replace each face of $\K$ with its hyperbolization.   We make use of a barycentric subdivision in order to have a simplicial symmetry that fixes a codimension-one subcomplex. As before this involution is defined on the vertices of the barycentric subdivision $(\sigma^{n})^{\prime}$ by $\hat{\mu}\mapsto \hat{\nu}$, where $\nu$ is the face of $\sigma^{n}$ complementary to the face $\mu$ and $\hat{\mu}$ denotes the barycenter of $\mu$.

We begin with the $2$-simplex $\sigma^2$ and define its hyperbolization to be  $\mathcal{G}(\sigma^2) = (\partial (\sigma{^2})^{\prime} \times [-1,1])/\mathbb{Z}_2,$ where the diagonal $\mathbb{Z}_2$ action is defined as above on the boundary of the simplex and by $t\mapsto -t$ on $[-1,1]$.  This time $\mathcal{G}(\sigma^2)$ is a once-punctured torus, and $\partial\mathcal{G}(\sigma^2)$ is canonically isomorphic to $\partial (\sigma^2)^{\prime}$.  Therefore, we hyperbolize the $2$-skeleton of $\K$ by replacing each $2$-face with a copy of $\mathcal{G}(\sigma^2)$.  We simply define $\mathcal{G}(\sigma^0) = \sigma^0$ and $\mathcal{G}(\sigma^1) = \sigma^1$. 

Inductively, suppose we have defined $\mathcal{G}(\sigma^{n-1})$, and we replace each $(n-1)$-simplex of $\partial(\sigma^{n})'$ with a copy of $\mathcal{G}(\sigma^{n-1})$, in such a way that the  involution $r$ on $\partial (\sigma{^n})^{\prime}$ induced by interchanging two vertices of $\sigma^{n}$ and leaving the remaining vertices of $\sigma^{n}$ fixed, gives rise to an involution on $\mathcal{G}(\partial (\sigma^n)')$ with a codimension-one fixed point set.  

We define $\mathcal{G}(\sigma^n) =\Delta (\mathcal{G}(\partial (\sigma{^{n-1}})') $, where $\Delta$ denotes the construction described in Proposition \ref{prop:gromov}.

Making use of the contractibility of $\sigma^{n}$ there is an inductively defined map $p_{n}: \mathcal{G}(\sigma^{n})\to \sigma^{n}$ such that the preimage of any face is precisely the hyperbolization of that face. In particular $\mathcal{G}(\sigma^{n})$ has a face structure combinatorially equivalent to that of $\sigma^{n}$.

If $\K$ is $n$-dimensional, then we define
$$ \mathcal{G}(\K) =  \bigcup_{\substack{F \in \K \\ \dim F = n}} \mathcal{G}(F)/\sim .$$ Here we identify the boundary facets of the $\mathcal{G}(F)$ in exactly the same pattern as the boundaries of the original facets $F$ were identified.

Alternatively we set
$$ \mathcal{G}(\K) = \mathcal{G}(\K^{(n-1)}) \cup \bigcup_{\substack{F \in \K \\ \dim F = n}} \mathcal{G}(F),$$ where $\K^{(n-1)}$ denotes the $(n-1)$-skeleton of $\K$.  Finally, we define the \textit{Gromov hyperbolization} of $\K$, denoted by $\mathcal{G}(\K)$, to be the complex obtained by hyperbolizing the cells of $\K$, starting with the $2$-skeleton and proceeding to its top-dimensional skeleton.

As before we can instead define $\mathcal{G}(\sigma^2)$ to be any compact surface with a single boundary component with Euler characteristic $a \leq 0$.  We denote this hyperbolization by $\mathcal{G}_a(\sigma^2)$, and, in general, $\mathcal{G}_{a}(\K)$.

\subsubsection{Cubical case}\label{subsubsec:cubical}
We adapt the preceding construction to the cubical case as well. The construction for cubical complexes is quite similar, where we understand the cubical barycentric subdivision of a cube $I^{n}$ to be given by
\[
(I^{n})'=(I')^{n}
\]
where $I'$ denotes the 1-complex with three vertices and two edges and the product is given the product cubical structure. Note, in particular, that $(I^{n})'$ admits a reflection $r$ that interchanges a pair of opposite facets and fixes a copy of $(I^{n-1})'$.

For a given cubical complex $\mathcal{K}$, we  define a cell complex $\mathcal{G}(\mathcal{K})$ that we again christen the \textit{Gromov hyperbolization} of $\mathcal{K}$ by modifying the skeleta of $\K$, starting from the $2$-skeleton and inducting to the top-dimensional skeleton.  To do this, we will inductively define the Gromov hyperbolizations of the $n$-cube $I^n$ and its boundary $\partial I^n$, and replace each face of $\K$ with its hyperbolization.   We make use of a cubical barycentric subdivision in order to have a cubical symmetry that fixes a codimension-one subcomplex as indicated above.

We begin with the $2$-cube $I^2$ and define its hyperbolization to be  $\mathcal{G}(I^2) = (\partial (I{^2})^{\prime} \times [-1,1])/\mathbb{Z}_2,$ where the diagonal $\mathbb{Z}_2$ action is defined as above on the boundary of the square and by $t\mapsto -t$ on $[-1,1]$.  Again $\mathcal{G}(I^2)$ is topologically a once-punctured torus, and $\partial\mathcal{G}(I^2)$ is canonically isomorphic to $\partial (I^2)^{\prime}$.  Therefore, we hyperbolize the $2$-skeleton of $\K$ by replacing each $2$-face with a copy of $\mathcal{G}(I^2)$.  We simply define $\mathcal{G}(I^0) = I^0$ and $\mathcal{G}(I^1) =I^1$. 

Inductively, suppose we have defined $\mathcal{G}(I^{n-1})$, and we replace each $(n-1)$-cube of $\partial(I^{n})'$ with a copy of $\mathcal{G}(I^{n-1})$, in such a way that the  involution $r$ on $\partial (I{^n})^{\prime}$ induced by interchanging two facets of $I^{n}$ as discussed above, gives rise to an involution $r$ on $\mathcal{G}(\partial (I^n)')$.

We define $\mathcal{G}(I^n) =\Delta (\mathcal{G}(\partial (I{^{n-1}})') $,  where $\Delta$ denotes the construction described in Proposition \ref{prop:gromov}.

Making use of the contractibility of $I^{n}$ there is an inductively defined map $p_{n}: \mathcal{G}(I^{n})\to I^{n}$ such that the preimage of any face is precisely the hyperbolization of that face. In particular $\mathcal{G}(I^{n})$ has a face structure combinatorially equivalent to that of $I^{n}$.

If $\K$ is $n$-dimensional, then we define
$$ \mathcal{G}(\K) =  \bigcup_{\substack{F \in \K \\ \dim F = n}} \mathcal{G}(F)/\sim .$$ Here we identify the boundary facets of the $\mathcal{G}(F)$ in exactly the same pattern as the boundaries of the original facets $F$ were identified.

Alternatively we set

$$ \mathcal{G}(\K) = \mathcal{G}(\K^{(n-1)}) \cup \bigcup_{\substack{F \in \K \\ \dim F = n}} \mathcal{G}(F),$$ where $\K^{(n-1)}$ denotes the $(n-1)$-skeleton of $\K$.  

As before we can instead define $\mathcal{G}(I^2)$ to be any compact surface with a single boundary component with Euler characteristic $a \leq 0$.  We denote this hyperbolization by $\mathcal{G}_a(I^2)$, and, in general, $\mathcal{G}_{a}(\K)$.

\section{The combinatorics of the M\"obius band hyperbolization} \label{sec:mobius}

\subsection{Euler characteristic of the cubical M\"obius band hyperbolization}

We begin by studying the effect of hyperbolizing the cells of a cubical complex on its Euler characteristic.  Suppose $\K$ is a cubical complex, and let $\mathcal{L}$ be the complex obtained by hyperbolizing the $i$-skeleton of $\K$.  Pick an $(i+1)$-face $F$ of $\K$, and let $\mathcal{L}'$ be the complex obtained from $\mathcal{L}$ by adding the hyperbolized cell $\mathcal{M}_a(F)$.  Then $\chi(\mathcal{L}') = \chi(\mathcal{L}) + \chi(\mathcal{M}_a(F)) - \chi(\mathcal{M}_a(\partial F))$.  By inductively repeating this process, we see that $$\chi(\mathcal{M}_a(\K)) = \sum_{k=0}^da_{\mathcal{M}}(k)f_k(\K),$$ where $a_{\mathcal{M}}(k) := \chi(\mathcal{M}_a(I^k)) - \chi(\mathcal{M}_a(\partial I^k))$.  We call the numbers $a_{\mathcal{M}}(k)$ for $k=0,1, 2, \ldots$ the \textit{hyperbolization coefficients} of the cubical M\"obius band hyperbolization.

Since $\chi(\mathcal{M}_a(I^n)) = \frac{1}{2} \chi(\mathcal{M}_a(\partial I^n))$, for any $n \geq 3$ we obtain the following relation:
\begin{eqnarray*}
a_{\mathcal{M}}(n) &=& \chi(\mathcal{M}_a(I^n)) - \chi(\mathcal{M}_a(\partial I^n)) \\
&=& \frac{1}{2} \chi(\mathcal{M}_a(\partial I^n)) - \chi(\mathcal{M}_a(\partial I^n)) \\
&=& -\frac{1}{2}\sum_{k=0}^{n-1}a_{\mathcal{M}}(k)f_k(\partial I^n).
\end{eqnarray*}
Thus, the hyperbolization coefficients are given by $a_{\mathcal{M}}(0) = 1, a_{\mathcal{M}}(1) = -1, a_{\mathcal{M}}(2) = a \leq 0$, and 
\begin{equation} \label{mobius-an-recursion}
a_{\mathcal{M}}(n)=-\frac{1}{2}\sum_{k=0}^{n-1}a_{\mathcal{M}}(k){n\choose k}2^{n-k},
\end{equation}
for all $n \geq 3$.  Values of $a_{\mathcal{M}}(n)$ for small values of $n$ are shown in the following table.  In particular, notice that $a_{\mathcal{M}}(2k) = -\frac{1}{2}\chi(\mathcal{M}_a(\partial I^{2k})) = 0$ for $k\ge 2$ by Poincar\'e duality. 

\renewcommand{\arraystretch}{1.5}
\begin{center}
\begin{table}[hbt]
\begin{tabular}{|c||c|c|c|c|c|c|c|} \hline
$n$ & $0$ & $1$ & $2$ & $3$ & $4$ & $5$ & $6$ \\ \hline
$a_{\mathcal{M}}(n)$ & $1$ & $-1$ & $a$ & $2-3a$ & 0 & $-16+20a$ & 0 \\ \hline
\end{tabular}
\caption{Values of $a_{\mathcal{M}}(n)$ for small $n$}
\end{table}
\end{center}

Let $\K$ be a $d$-dimensional cubical complex.  We write

\begin{eqnarray*}
\chi(\mathcal{M}_{a}(\K)) &=& \sum_{k=0}^da_{\mathcal{M}}(k)f_k(\K) \\
&=& \sum_{k=0}^d2^{-k}a_{\mathcal{M}}(k)\sum_{j=0}^k{d-j \choose d-k}h_j^{(sc)}(\K) \\
&=& \sum_{j=0}^d\left[ \sum_{k=j}^d2^{-k}{d-j \choose d-k}a_{\mathcal{M}}(k)\right]h_j^{(sc)}(\K),
\end{eqnarray*}
and define $$c_{\mathcal{M}}(j,d):= \sum_{k=j}^d2^{-k}{d-j \choose d-k}a_{\mathcal{M}}(k),$$ so that $$\chi(\mathcal{M}_{a}(\K)) = \sum_{j=0}^dc_{\mathcal{M}}(j,d)h_j^{(sc)}(\K).$$

Calculations of the values of the coefficients $c_{\mathcal{M}}(j,d)$ for small values of $d$ reveal some surprising skew-symmetry and monotonicity properties that we formalize in the next three propositions. Table~\ref{mobius-c-j-d-table} shows the values of $c_{\mathcal{M}}(j,d)$ for small values of $d$, which we will use as the basis for the inductive arguments that follow.

\renewcommand{\arraystretch}{1.5}
\begin{center}
\begin{table}[htb]
\begin{tabular}{|c||c|c|c|c|c|c|c|c|} \hline
$d\setminus j$ & $0$ & $1$ & $2$ & $3$ & $4$ & $5$ & $6$  \\ \hline
$2$ & $\frac{a}{4}$ & $\frac{a-2}{4}$ & $\frac{a}{4}$ & & & &  \\ \hline
$3$ & $\frac{3a-2}{8}$ & $\frac{a-2}{8}$ & $\frac{-a+2}{8}$ & $\frac{-3a+2}{8}$ & & &   \\ \hline
$4$ & $0$ & $\frac{-6a+4}{16}$ & $\frac{-8a+8}{16}$ & $\frac{-6a+4}{16}$ & $0$ & & \\ \hline
$5$ & $\frac{-20a+16}{32}$ & $\frac{-20a+16}{32}$ & $\frac{-8a+ 8}{32}$ & $\frac{8a-8}{32}$ & $\frac{20a-16}{32}$ & $\frac{20a-16}{32}$ &  \\ \hline
$6$ & $0$ & $\frac{40a-32}{64}$ & $\frac{80a-64}{64}$ & $\frac{96a-80}{64}$ & $\frac{80a-64}{64}$ & $\frac{40a-32}{64}$ & $0$ \\ \hline
\end{tabular}
\medskip
\caption{Values of $c_{\mathcal{M}}(j,d)$ for small $d$}
\label{mobius-c-j-d-table}
\end{table}
\end{center}

\begin{lemma}\label{mobius-c-recursion}
For all $d$ and all $0 \leq j < d$, $$c_{\mathcal{M}}(j,d) = c_{\mathcal{M}}(j+1,d) + c_{\mathcal{M}}(j,d-1).$$
\end{lemma}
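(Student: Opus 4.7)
The proof is a direct manipulation of the defining formula
$$c_{\mathcal{M}}(j,d) = \sum_{k=j}^d 2^{-k}\binom{d-j}{d-k} a_{\mathcal{M}}(k)$$
in light of Pascal's identity. The plan is to expand the proposed right-hand side $c_{\mathcal{M}}(j+1,d) + c_{\mathcal{M}}(j,d-1)$ as a single sum indexed by $k$, recognize the binomial coefficient combining the two contributions as $\binom{d-j}{d-k}$, and then verify the boundary indices $k=j$ and $k=d$ separately.

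In more detail, I would write
$$c_{\mathcal{M}}(j+1,d) = \sum_{k=j+1}^d 2^{-k}\binom{d-j-1}{d-k}a_{\mathcal{M}}(k),$$
$$c_{\mathcal{M}}(j,d-1) = \sum_{k=j}^{d-1} 2^{-k}\binom{d-j-1}{d-k-1}a_{\mathcal{M}}(k),$$
and add them. For $k$ in the common range $j+1 \leq k \leq d-1$, Pascal's rule gives
$$\binom{d-j-1}{d-k} + \binom{d-j-1}{d-k-1} = \binom{d-j}{d-k},$$
so these middle terms combine to exactly the middle terms of $c_{\mathcal{M}}(j,d)$. The extreme index $k=j$ appears only in $c_{\mathcal{M}}(j,d-1)$ and contributes $2^{-j}\binom{d-j-1}{d-j-1}a_{\mathcal{M}}(j) = 2^{-j} a_{\mathcal{M}}(j)$; the extreme index $k=d$ appears only in $c_{\mathcal{M}}(j+1,d)$ and contributes $2^{-d}\binom{d-j-1}{0}a_{\mathcal{M}}(d) = 2^{-d} a_{\mathcal{M}}(d)$. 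Both match the $k=j$ and $k=d$ terms of $c_{\mathcal{M}}(j,d)$, since $\binom{d-j}{d-j} = \binom{d-j}{0} = 1$.

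There is no substantive obstacle here; the identity is a purely formal consequence of the definition of $c_{\mathcal{M}}(j,d)$ together with Pascal's rule, and in particular it makes no use of the values or the recursion \eqref{mobius-an-recursion} satisfied by the hyperbolization coefficients themselves. The only care needed is in tracking the two endpoints of summation, which is also what forces the hypothesis $0 \leq j < d$.
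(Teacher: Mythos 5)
Your proof is correct and is essentially the paper's argument run in reverse: the paper starts from $c_{\mathcal{M}}(j,d)$, splits $\binom{d-j}{d-k}$ via Pascal's rule, and discards the vanishing endpoint terms $\binom{d-j-1}{d-j}=0$ and the $k=d$ term of the second sum, whereas you assemble the right-hand side and check the endpoints directly. Both are the same formal computation from the definition, and your endpoint bookkeeping is accurate.
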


\begin{proof}
By definition,
\begin{eqnarray*}
c_{\mathcal{M}}(j,d) &=& \sum_{k=j}^d2^{-k}{d-j \choose d-k}a_{\mathcal{M}}(k) \\
&=& \sum_{k=j}^d2^{-k}\left[{d-j-1 \choose d-k-1} + {d-j-1 \choose d-k}\right]a_{\mathcal{M}}(k) \\
&=& \sum_{k=j}^d2^{-k}{d-1-j \choose d-k}a_{\mathcal{M}}(k) + \sum_{k=j}^d2^{-k}{d-1-j \choose d-1-k}a_{\mathcal{M}}(k) \\
&=& \sum_{k=j+1}^d2^{-k}{d-j-1 \choose d-k}a_{\mathcal{M}}(k) + \sum_{k=j}^{d-1}2^{-k}{d-1-j \choose d-1-k}a_{\mathcal{M}}(k) \\
&=& c_{\mathcal{M}}(j+1,d) + c_{\mathcal{M}}(j,d-1).
\end{eqnarray*}
To get from the third to the fourth line in the above equation we use the fact that the $k=j$ term term of the first  summation vanishes since ${d-j-1 \choose d-j}=0$ and similarly the $k=d$ term vanishes from the second summation.
\end{proof}

\begin{lemma}\label{mobius-c-symmetry}
For all $j$ and all $d$, $$c_{\mathcal{M}}(j,d) = (-1)^dc_{\mathcal{M}}(d-j,d).$$
\end{lemma}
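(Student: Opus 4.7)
The plan is to induct on $d$, combining the recursion in Lemma \ref{mobius-c-recursion} with a single boundary identity
\begin{equation*}
c_{\mathcal{M}}(0, d) = (-1)^d c_{\mathcal{M}}(d, d).
\end{equation*}
The base cases $d = 0, 1, 2$ can be read directly from Table \ref{mobius-c-j-d-table} (or a short hand computation). Lemma \ref{mobius-c-recursion} is a purely triangular recursion that will carry the symmetry up the $j$-axis once it is anchored at one endpoint, so the arithmetic content of the proof is really the boundary identity.

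To establish that boundary identity, observe first that only the $k = d$ term survives in the defining sum for $c_{\mathcal{M}}(d, d)$, so $c_{\mathcal{M}}(d, d) = 2^{-d} a_{\mathcal{M}}(d)$. For $c_{\mathcal{M}}(0, d) = \sum_{k=0}^{d} 2^{-k} \binom{d}{k} a_{\mathcal{M}}(k)$, I would rewrite the hyperbolization-coefficient recursion \eqref{mobius-an-recursion} (valid for $d \geq 3$) as
\begin{equation*}
2^{-d} a_{\mathcal{M}}(d) = -\tfrac{1}{2} \sum_{k=0}^{d-1} 2^{-k} \binom{d}{k} a_{\mathcal{M}}(k),
\end{equation*}
and substitute to obtain $c_{\mathcal{M}}(0, d) = -2^{-d} a_{\mathcal{M}}(d) = -c_{\mathcal{M}}(d, d)$ for $d \geq 3$. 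For odd $d$ this is exactly $(-1)^d c_{\mathcal{M}}(d, d)$; for even $d \geq 4$ the remark following \eqref{mobius-an-recursion} gives $a_{\mathcal{M}}(d) = 0$ by Poincar\'e duality, so both sides vanish.

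For the inductive step, assume the symmetry for $d-1$ and set $g(j) := c_{\mathcal{M}}(j, d) - (-1)^d c_{\mathcal{M}}(d-j, d)$ for $0 \leq j \leq d$. The boundary identity says $g(0) = 0$. Applying Lemma \ref{mobius-c-recursion} to the index $j$ gives $c_{\mathcal{M}}(j, d) - c_{\mathcal{M}}(j+1, d) = c_{\mathcal{M}}(j, d-1)$, and applying it to the index $d - j - 1$ gives $c_{\mathcal{M}}(d-j, d) - c_{\mathcal{M}}(d-j-1, d) = -c_{\mathcal{M}}(d-j-1, d-1)$. Combining these,
\begin{equation*}
g(j) - g(j+1) = c_{\mathcal{M}}(j, d-1) - (-1)^{d-1} c_{\mathcal{M}}\bigl((d-1) - j,\, d-1\bigr),
\end{equation*}
which vanishes by the inductive hypothesis in dimension $d-1$. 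Hence $g$ is constant in $j$, and therefore identically zero.

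The main obstacle is the boundary identity: the recursion of Lemma \ref{mobius-c-recursion} is compatible with any arbitrary endpoint data, so the symmetry must be seeded by one piece of genuinely arithmetic input coming from the hyperbolization coefficients themselves. Once that seed is extracted from the $a_{\mathcal{M}}$-recursion \eqref{mobius-an-recursion} together with the even-dimensional Poincar\'e-duality vanishing, the inductive propagation along $j$ is essentially automatic.
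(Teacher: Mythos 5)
Your proof is correct and rests on exactly the same two ingredients as the paper's: the endpoint identity $c_{\mathcal{M}}(0,d)=-c_{\mathcal{M}}(d,d)$ extracted from the recursion \eqref{mobius-an-recursion} (with Poincar\'e-duality vanishing in even dimensions), followed by propagation along $j$ via Lemma \ref{mobius-c-recursion} and the inductive hypothesis in dimension $d-1$. The only cosmetic difference is that you propagate forward by showing the difference function $g$ is constant, whereas the paper runs a reverse induction on $j$; these are the same argument.
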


\begin{proof}
We prove the claim by induction on $d$ and then induction on $j$.  Table~\ref{mobius-c-j-d-table} shows that the claim holds for small values of $d$.  By induction (on $d$), we may suppose the claim holds for all values $c_{\mathcal{M}}(j,d-1)$.  Now we prove that the claim holds for all values $c_{\mathcal{M}}(j,d)$ by reverse induction on $j$.  When $j=d$, Equation \eqref{mobius-an-recursion} implies that
\begin{displaymath}
c_{\mathcal{M}}(0,d) = \sum_{k=0}^d2^{-k}{d \choose k}a_{\mathcal{M}}(k) = -\frac{1}{2^d}a_{\mathcal{M}}(d) = -c_{\mathcal{M}}(d,d).
\end{displaymath}
When $d$ is even, $a_{\mathcal{M}}(d) = 0$, so $c_{\mathcal{M}}(0,d) = -c_{\mathcal{M}}(d,d) = 0$; and when $d$ is odd, $c_{\mathcal{M}}(0,d) = -c_{\mathcal{M}}(d,d)$ as desired.  

Finally suppose $j < d$ and that $c_{\mathcal{M}}(j+1,d) = (-1)^dc_{\mathcal{M}}(d-j-1,d)$.  Then
\begin{eqnarray*}
c_{\mathcal{M}}(j,d) &=& c_{\mathcal{M}}(j+1,d) +  c_{\mathcal{M}}(j,d-1) \\
&=& (-1)^dc_{\mathcal{M}}(d-j-1,d) + (-1)^{d-1}c_{\mathcal{M}}(d-j-1,d-1) \\
&=& (-1)^d\left[c_{\mathcal{M}}(d-j,d) + c_{\mathcal{M}}(d-j-1,d-1)\right] \\
&& +(-1)^{d-1}c_{\mathcal{M}}(d-1-j,d-1) \\
&=& (-1)^dc_{\mathcal{M}}(d-j,d),
\end{eqnarray*}
where the second line follows from our inductive hypotheses and the third line from Lemma~\ref{mobius-c-recursion}.
\end{proof}

\begin{lemma}\label{mobius-c-monotony}
For all $m \geq 1$, we have 
\begin{eqnarray}
\label{mobius-0mod4} && 0 = c_{\mathcal{M}}(0,4m) \leq c_{\mathcal{M}}(1,4m) \leq \cdots \leq c_{\mathcal{M}}(2m,4m) \\
\label{mobius-1mod4} && 0 \leq c_{\mathcal{M}}(2m,4m+1) \leq c_{\mathcal{M}}(2m-1,4m+1) \leq \cdots \leq c_{\mathcal{M}}(0,4m+1) \\
\label{mobius-2mod4}&&  0 = c_{\mathcal{M}}(0,4m+2) \geq c_{\mathcal{M}}(1,4m+2) \geq \cdots \geq c_{\mathcal{M}}(2m+1,4m+2) \\
\label{mobius-3mod4} && 0 \geq c_{\mathcal{M}}(2m+1,4m+3) \geq c_{\mathcal{M}}(2m,4m+3) \geq \cdots \geq c_{\mathcal{M}}(0,4m+3).
\end{eqnarray}
\end{lemma}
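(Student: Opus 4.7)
The plan is to prove all four monotonicity chains \eqref{mobius-0mod4}--\eqref{mobius-3mod4} simultaneously by induction on $d$, using the recursion of Lemma \ref{mobius-c-recursion} and the symmetry of Lemma \ref{mobius-c-symmetry} as the two workhorses. The base cases $d=4,5,6$ can be read off directly from Table \ref{mobius-c-j-d-table} (remembering $a\le 0$). The endpoint vanishings $c_{\mathcal{M}}(0,d)=c_{\mathcal{M}}(d,d)=0$ for even $d\ge 4$ were already observed in the proof of Lemma \ref{mobius-c-symmetry}, and the symmetry lets us focus on the ``lower half'' $0\le j\le \lfloor d/2\rfloor$ of each chain.

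The engine of the induction is the rewriting of the recursion as
\begin{equation*}
c_{\mathcal{M}}(j,d)-c_{\mathcal{M}}(j+1,d)=c_{\mathcal{M}}(j,d-1),
\end{equation*}
which converts each pairwise monotonicity step at level $d$ into a sign statement for a coefficient at level $d-1$. Thus the four congruence classes mod $4$ close up into a cycle of implications: \eqref{mobius-3mod4} at dimension $4m-1$ supplies the nonpositivity of $c_{\mathcal{M}}(j,4m-1)$ for $0\le j\le 2m-1$ needed for \eqref{mobius-0mod4} at dimension $4m$; \eqref{mobius-0mod4} then supplies the nonnegativity needed for \eqref{mobius-1mod4}; \eqref{mobius-1mod4} the nonnegativity needed for \eqref{mobius-2mod4}; and \eqref{mobius-2mod4} the nonpositivity needed for \eqref{mobius-3mod4}.

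The step I expect to require the most care is establishing the ``central'' inequalities $c_{\mathcal{M}}(2m,4m+1)\ge 0$ in \eqref{mobius-1mod4} and $c_{\mathcal{M}}(2m+1,4m+3)\le 0$ in \eqref{mobius-3mod4}, since these pin down the sign of a single coefficient rather than a pairwise difference and so are not produced by the recursion alone. To handle them I would combine the recursion with the odd-dimensional symmetry: from $c_{\mathcal{M}}(2m+1,4m+1)=-c_{\mathcal{M}}(2m,4m+1)$ and the recursion one obtains
\begin{equation*}
2c_{\mathcal{M}}(2m,4m+1)=c_{\mathcal{M}}(2m,4m),
\end{equation*}
whose right-hand side is nonnegative by the inductive hypothesis \eqref{mobius-0mod4}. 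An identical manipulation gives $2c_{\mathcal{M}}(2m+1,4m+3)=c_{\mathcal{M}}(2m+1,4m+2)$, nonpositive by \eqref{mobius-2mod4}. Once these pivot values are in hand, the rest of each chain propagates one step at a time via the recursion.
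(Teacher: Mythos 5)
Your proposal is correct and follows essentially the same route as the paper: an induction cycling through the residues of $d$ modulo $4$, using the recursion in the form $c_{\mathcal{M}}(j,d)-c_{\mathcal{M}}(j+1,d)=c_{\mathcal{M}}(j,d-1)$ for the pairwise steps and the skew-symmetry of Lemma \ref{mobius-c-symmetry} to obtain the anchor identities $2c_{\mathcal{M}}(2m,4m+1)=c_{\mathcal{M}}(2m,4m)$ and $2c_{\mathcal{M}}(2m+1,4m+3)=c_{\mathcal{M}}(2m+1,4m+2)$. The paper's proof is the same argument, differing only in where it enters the four-step cycle and in using just $d=4$ as the base case.
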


\begin{proof}
We prove the claim by induction on $m$.  We will show that for each value of $m$,  Equation \eqref{mobius-0mod4} implies Equation \eqref{mobius-1mod4}, which implies Equation \eqref{mobius-2mod4}, which implies Equation \eqref{mobius-3mod4} which in turn implies \eqref{mobius-0mod4} for dimension $4(m+1)$.  Table \ref{mobius-c-j-d-table} shows that the claim holds for the values $c_{\mathcal{M}}(j,4)$.  

Suppose first that \eqref{mobius-0mod4} holds.  By Lemma \ref{mobius-c-symmetry},  $c_{\mathcal{M}}(2m,4m+1) = -c_{\mathcal{M}}(2m+1,4m+1)$ and hence
\begin{eqnarray*}
c_{\mathcal{M}}(2m,4m+1) &=& c_{\mathcal{M}}(2m+1,4m+1) + c_{\mathcal{M}}(2m,4m) \\
&=& -c_{\mathcal{M}}(2m,4m+1) + c_{\mathcal{M}}(2m,4m).
\end{eqnarray*}
Thus $c_{\mathcal{M}}(2m,4m+1) = \frac{1}{2}c_{\mathcal{M}}(2m,4m) \geq 0$ by \eqref{mobius-0mod4}.  Similarly, for any $j < 2m$, $$c_{\mathcal{M}}(j,4m+1) = c_{\mathcal{M}}(j+1,4m+1) + c_{\mathcal{M}}(j,4m) \geq c_{\mathcal{M}}(j+1,4m+1).$$  Thus Equation \eqref{mobius-1mod4} holds.

Next, we show that \eqref{mobius-2mod4} holds.  Since $$0 = c_{\mathcal{M}}(0,4m+2) = c_{\mathcal{M}}(1,4m+2) + c_{\mathcal{M}}(0,4m+1),$$ we see that $c_{\mathcal{M}}(1,4m+2) = -c_{\mathcal{M}}(0,4m+1) \leq 0$.  Again, for $1 \leq j \leq 2m$, $$c_{\mathcal{M}}(j+1,4m+2) = c_{\mathcal{M}}(j,4m+2) - c_{\mathcal{M}}(j,4m+1) \leq c_{\mathcal{M}}(j,4m+2),$$ and hence Equation \eqref{mobius-2mod4} holds.  

Showing that \eqref{mobius-2mod4} implies \eqref{mobius-3mod4} is the same argument that was used to show that \eqref{mobius-0mod4} implies \eqref{mobius-1mod4}; showing that \eqref{mobius-3mod4} implies \eqref{mobius-0mod4} for dimension $4m+4$ uses the same argument that was used to show that \eqref{mobius-1mod4} implies \eqref{mobius-2mod4}.  
\end{proof}

Putting all of this together, we are able to prove the main theorem in the case of the M\"obius band hyperbolizations of a cubical manifold.
\begin{theorem} \label{thm:cubical-mobius}
Let $\K$ be a closed cubical $2m$-manifold.  Then $$(-1)^m\chi(\mathcal{M}_{a}(\K)) \geq 0$$ for any integer $a\le 0$.
\end{theorem}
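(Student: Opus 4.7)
The plan is to combine three ingredients that are all now in hand: the expansion of $\chi(\mathcal{M}_a(\K))$ in the basis of short cubical $h$-numbers, the nonnegativity of those $h$-numbers for a closed cubical manifold, and the sign information on the coefficients $c_{\mathcal{M}}(j,d)$ provided by Lemma \ref{mobius-c-monotony}. Once these are lined up, the theorem should follow immediately with essentially no further calculation — the work has all been done in the lemmas above.

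First, I would recall the formula
\[
\chi(\mathcal{M}_a(\K)) \;=\; \sum_{j=0}^{2m} c_{\mathcal{M}}(j,2m)\, h^{(sc)}_j(\K),
\]
derived at the start of Section \ref{sec:mobius} from the decomposition of $\chi$ in terms of the hyperbolization coefficients $a_{\mathcal{M}}(k)$ and the conversion between $f$-numbers and short cubical $h$-numbers. Next, since $\K$ is a closed cubical manifold (hence a $\mathbf{k}$-homology manifold for any field $\mathbf{k}$), the link of every vertex is Cohen–Macaulay by Reisner's criterion, so each $h_j(\lk_{\K}(v)) \geq 0$ by Theorem \ref{h-nums-nonnegative}, and summing over vertices gives $h^{(sc)}_j(\K) \geq 0$ for every $j$.

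The final step is a parity case split on $m$. If $m$ is even, write $d = 2m = 4m'$; then Equation \eqref{mobius-0mod4} of Lemma \ref{mobius-c-monotony} (together with the symmetry of Lemma \ref{mobius-c-symmetry}, which extends the monotone chain past the middle) gives $c_{\mathcal{M}}(j,d) \geq 0$ for all $0 \leq j \leq d$, so $\chi(\mathcal{M}_a(\K)) \geq 0$ and hence $(-1)^m \chi(\mathcal{M}_a(\K)) \geq 0$. If $m$ is odd, write $d = 2m = 4m'+2$; then Equation \eqref{mobius-2mod4} (again extended by Lemma \ref{mobius-c-symmetry}) gives $c_{\mathcal{M}}(j,d) \leq 0$ for all $j$, so $\chi(\mathcal{M}_a(\K)) \leq 0$ and once more $(-1)^m \chi(\mathcal{M}_a(\K)) \geq 0$.

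There is no real obstacle left at this stage; the only thing one needs to be a bit careful about is that Lemma \ref{mobius-c-monotony} is stated as a monotone chain running from $j=0$ to roughly $j=d/2$, so one must invoke the skew-symmetry $c_{\mathcal{M}}(j,d) = (-1)^d c_{\mathcal{M}}(d-j,d)$ of Lemma \ref{mobius-c-symmetry} — which for even $d$ is genuine symmetry — to conclude that the stated sign persists on the entire range $0 \leq j \leq d$. Once that observation is made, the argument is a one-line consequence of the displayed sum.
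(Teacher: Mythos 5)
Your proposal is correct and is essentially identical to the paper's own proof: both expand $\chi(\mathcal{M}_a(\K))$ in short cubical $h$-numbers, invoke their nonnegativity for a closed cubical manifold, and use Lemma \ref{mobius-c-monotony} together with the symmetry of Lemma \ref{mobius-c-symmetry} to get the uniform sign of the coefficients $c_{\mathcal{M}}(j,2m)$ according to $2m \bmod 4$. Your explicit remark that the monotone chains only reach the middle index and must be extended by skew-symmetry is exactly the role Lemma \ref{mobius-c-symmetry} plays in the paper's argument.
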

\begin{proof}
When $2m \equiv 0 \mod 4$, equation \eqref{mobius-0mod4} and Lemma~\ref{mobius-c-symmetry} imply that $c_{\mathcal{M}}(j,2m) \geq 0$ for all $j$.  When $2m \equiv 2 \mod 4$, Equation \eqref{mobius-2mod4} and Lemma~\ref{mobius-c-symmetry} imply that $c_{\mathcal{M}}(j,2m) \leq 0$ for all $j$.  Since $h_j^{(sc)}(\K) \geq 0$ for all $j$, it follows that
\begin{eqnarray*}
(-1)^m\chi(\mathcal{M}_{a}(\mathcal{K})) = \sum_{j=0}^{2m}c_{\mathcal{M}}(j,2m)h_j^{(sc)}(\mathcal{K}) \geq 0.
\end{eqnarray*}
\end{proof}

\subsection{Euler characteristic of the simplicial M\"obius band hyperbolization}

As in the case of the cubical M\"obius band hyperbolization, we begin by defining \textit{simplicial hyperbolization coefficients} $b_{\mathcal{M}}(n):=\chi(\mathcal{M}_a(\sigma^n)) - \chi(\mathcal{M}_a(\partial\sigma^n))$ so that 
\begin{eqnarray*}
b_{\mathcal{M}}(n) &=& \chi(\mathcal{M}_a(\sigma^n)) - \chi(\mathcal{M}_a(\partial\sigma^n)) \\
&=& -\frac{1}{2}\chi(\mathcal{M}_a(\partial\sigma^n)) \\
&=& -\frac{1}{2} \sum_{k=0}^{n-1}b_{\mathcal{M}}(k)f_k(\partial\sigma^n).
\end{eqnarray*}
Thus the simplicial hyperbolization coefficients are given by $b_{\mathcal{M}}(0) = 1, b_{\mathcal{M}}(1) = -1, b_{\mathcal{M}}(2) = a \leq 0$, and 
\begin{equation} \label{mobius-bn-recursion}
b_{\mathcal{M}}(n)=-\frac{1}{2}\sum_{k=0}^{n-1}b_{\mathcal{M}}(k){n+1\choose k+1},
\end{equation}
for all $n \geq 3$.  Values of $b_{\mathcal{M}}(n)$ for small values of $n$ are shown in the following table.  In particular, notice that $b_{\mathcal{M}}(2k) = -\frac{1}{2}\chi(\mathcal{M}_a(\partial \sigma^{2k})) = 0$ by Poincar\'e duality. 

\renewcommand{\arraystretch}{1.5}
\begin{center}
\begin{table} [htb]
\begin{tabular}{|c||c|c|c|c|c|c|c|} \hline
$n$ & $0$ & $1$ & $2$ & $3$ & $4$ & $5$ & $6$ \\ \hline
$b_{\mathcal{M}}(n)$ & $1$ & $-1$ & $a$ & $1-2a$ & 0 & $-3+5a$ & 0 \\ \hline
\end{tabular}
\caption{Values of $b_{\mathcal{M}}(n)$ for small $n$}
\end{table}
\end{center}

As in the cubical case, we can express the Euler characteristic of the M\"obius band hyperbolization of a simplicial $d$-manifold $\Delta$ as

\begin{eqnarray*}
\chi(\mathcal{M}_{a}(\Delta)) &=& \sum_{k=0}^d b_{\mathcal{M}}(k)f_k(\Delta) \\
&=& \sum_{k=0}^d b_{\mathcal{M}}(k)\frac{1}{k+1}\sum_{j=0}^k{d-j \choose d-k}\widetilde{h}_j(\Delta) \\
&=& \sum_{j=0}^d\left[ \sum_{k=j}^d\frac{1}{k+1}{d-j \choose d-k}b_{\mathcal{M}}(k)\right]\widetilde{h}_j(\Delta).
\end{eqnarray*}
We define $$s_{\mathcal{M}}(j,d):= \sum_{k=j}^d\frac{1}{k+1}{d-j \choose d-k}b_{\mathcal{M}}(k),$$ so that $$\chi(\mathcal{M}_{a}(\K)) = \sum_{j=0}^ds_{\mathcal{M}}(j,d)\widetilde{h}_j(\Delta).$$

The values of the coefficients $s_{\mathcal{M}}(j,d)$ reveal the same skew-symmetry and monotonicity properties that we saw in the cubical case. Table~\ref{mobius-s-j-d-table} shows the values of $s_{\mathcal{M}}(j,d)$ for small values of $d$.

\renewcommand{\arraystretch}{1.5}
\begin{center}
\begin{table}[htb]
\begin{tabular}{|c||c|c|c|c|c|c|c|c|} \hline
$d\setminus j$ & $0$ & $1$ & $2$ & $3$ & $4$ & $5$ & $6$  \\ \hline
$2$ & $\frac{a}{3}$ & $\frac{a}{3}-\frac{1}{2}$ & $\frac{a}{3}$ & & & &  \\ \hline
$3$ & $\frac{a}{2}-\frac{1}{4}$ & $\frac{a}{6}-\frac{1}{4}$ & $-\frac{a}{6}+\frac{1}{4}$ & $-\frac{a}{2}+\frac{1}{4}$ & & &   \\ \hline
$4$ & $0$ & $-\frac{a}{2}+\frac{1}{4}$ & $-\frac{2a}{3}+\frac{1}{2}$ & $-\frac{a}{2}+\frac{1}{4}$ & $0$ & & \\ \hline
$5$ & $-\frac{5a}{6}+\frac{1}{2}$ & $-\frac{5a}{6}+\frac{1}{2}$ & $-\frac{a}{3}+\frac{1}{4}$ & $\frac{a}{3}-\frac{1}{4}$ & $\frac{5a}{6}-\frac{1}{2}$ & $\frac{5a}{6}-\frac{1}{2}$ &  \\ \hline
$6$ & $0$ & $\frac{5a}{6}-\frac{1}{2}$ & $\frac{5a}{3}-1$ & $2a-\frac{5}{4}$ & $\frac{5a}{3}-1$ & $\frac{5a}{6}-\frac{1}{2}$ & $\quad 0\quad$ \\ \hline
\end{tabular}
\medskip
\caption{Values of $s_{\mathcal{M}}(j,d)$ for small $d$}
\label{mobius-s-j-d-table}
\end{table}
\end{center}

The proofs of the following simplicial analogues of Lemmas \ref{mobius-c-recursion}, \ref{mobius-c-symmetry} and \ref{mobius-c-monotony} are identical to their cubical counterparts. 

\begin{lemma} \label{mobius-s-recursion}
For all $d$ and all $0 \leq j < d$, $$s_{\mathcal{M}}(j,d) = s_{\mathcal{M}}(j+1,d) + s_{\mathcal{M}}(j,d-1).$$
\end{lemma}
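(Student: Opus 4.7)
The plan is to mimic the proof of Lemma \ref{mobius-c-recursion} essentially verbatim, since the only structural feature of that argument is that the weights appearing in the definition depend on $k$ alone, while the binomial coefficient $\binom{d-j}{d-k}$ is what couples the parameters $j$ and $d$. The factor $2^{-k}$ used in the cubical case plays no role beyond being a function of $k$; in the present simplicial setting we have the analogous factor $\frac{1}{k+1}$, which likewise is inert under the manipulation.

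First I would start from the definition
\[
s_{\mathcal{M}}(j,d) = \sum_{k=j}^d \frac{1}{k+1}\binom{d-j}{d-k}b_{\mathcal{M}}(k)
\]
and apply Pascal's identity in the form $\binom{d-j}{d-k} = \binom{d-j-1}{d-k} + \binom{d-j-1}{d-k-1}$ inside the sum. This splits $s_{\mathcal{M}}(j,d)$ into two pieces, with the weights $\frac{1}{k+1}$ and the values $b_{\mathcal{M}}(k)$ untouched.

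Next I would observe that in the first piece, $\sum_{k=j}^d \frac{1}{k+1}\binom{d-j-1}{d-k}b_{\mathcal{M}}(k)$, the $k=j$ term vanishes because $\binom{d-j-1}{d-j}=0$, so the sum may be reindexed to start at $k=j+1$, giving exactly $s_{\mathcal{M}}(j+1,d)$. For the second piece, $\sum_{k=j}^d \frac{1}{k+1}\binom{d-j-1}{d-k-1}b_{\mathcal{M}}(k)$, the $k=d$ term vanishes because $\binom{d-j-1}{-1}=0$, and rewriting $d-j-1=(d-1)-j$ and $d-k-1=(d-1)-k$ identifies the remaining sum with $s_{\mathcal{M}}(j,d-1)$.

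There is no genuine obstacle here; the only thing one has to be a little careful about is that the $\frac{1}{k+1}$ weight does not mix with the $d$-dependent binomial, which is clear by inspection. This is why the author remarks that the proof is identical to the cubical case, and why no separate computation of $b_{\mathcal{M}}(k)$ recursions (analogous to Equation \eqref{mobius-an-recursion}) is needed for the recursion in $j$ and $d$.
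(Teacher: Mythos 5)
Your proof is correct and is essentially the paper's own argument: the paper simply states that the proof is identical to that of Lemma \ref{mobius-c-recursion}, and your Pascal's-identity computation with the vanishing $k=j$ and $k=d$ boundary terms is exactly that argument with the inert weight $2^{-k}$ replaced by $\frac{1}{k+1}$.
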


\begin{lemma} \label{mobius-s-symmetry}
For all $j$ and all $d$, $$s_{\mathcal{M}}(j,d) = (-1)^ds_{\mathcal{M}}(d-j,d).$$
\end{lemma}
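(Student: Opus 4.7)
The plan is to mirror the cubical proof of Lemma \ref{mobius-c-symmetry} essentially verbatim, with the only real content being a verification that the ``base case'' computation that $s_{\mathcal{M}}(0,d) = (-1)^d s_{\mathcal{M}}(d,d)$ still goes through once the binomial coefficients are readjusted for the simplicial setting. I would induct on $d$, and for each fixed $d$ perform reverse induction on $j$, exactly as in the cubical argument. The small tables of $s_{\mathcal{M}}(j,d)$ supply low-dimensional base cases.

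The essential observation in the base case $j = d$ (really $j = 0$ after flipping) is the binomial identity
\[
\frac{1}{k+1}\binom{d}{k} \;=\; \frac{1}{d+1}\binom{d+1}{k+1},
\]
which lets us rewrite
\[
s_{\mathcal{M}}(0,d) \;=\; \sum_{k=0}^{d}\frac{1}{k+1}\binom{d}{k}\,b_{\mathcal{M}}(k) \;=\; \frac{1}{d+1}\sum_{k=0}^{d}\binom{d+1}{k+1}b_{\mathcal{M}}(k).
\]
Isolating the top term $\binom{d+1}{d+1}b_{\mathcal{M}}(d) = b_{\mathcal{M}}(d)$ and invoking the recursion \eqref{mobius-bn-recursion} for the remaining truncated sum gives $\sum_{k=0}^{d}\binom{d+1}{k+1}b_{\mathcal{M}}(k) = -b_{\mathcal{M}}(d)$, so that $s_{\mathcal{M}}(0,d) = -\frac{1}{d+1}b_{\mathcal{M}}(d) = -s_{\mathcal{M}}(d,d)$. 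For odd $d$ this is the desired identity directly; for even $d$ one uses $b_{\mathcal{M}}(d) = 0$ (Poincar\'e duality, as noted after \eqref{mobius-bn-recursion}) so that both sides vanish and the required sign also works out.

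For the inductive step, assume the symmetry for $s_{\mathcal{M}}(j+1,d)$ in the same dimension and for all entries in dimension $d-1$. Applying Lemma \ref{mobius-s-recursion} once on the left and once on the right gives
\[
s_{\mathcal{M}}(j,d) - s_{\mathcal{M}}(j+1,d) \;=\; s_{\mathcal{M}}(j,d-1), \qquad s_{\mathcal{M}}(d-j-1,d) - s_{\mathcal{M}}(d-j,d) \;=\; s_{\mathcal{M}}(d-j-1,d-1),
\]
and substituting the inductive hypotheses for $s_{\mathcal{M}}(j+1,d)$ and $s_{\mathcal{M}}(j,d-1)$ (where the latter picks up a sign $(-1)^{d-1}$) yields $s_{\mathcal{M}}(j,d) = (-1)^d s_{\mathcal{M}}(d-j,d)$ after a short algebraic manipulation, identical in form to the one appearing in the proof of Lemma \ref{mobius-c-symmetry}.

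There is no genuine obstacle here; the only step that is not pure book-keeping is the verification of the base case $j = d$, and even that reduces via the identity $\frac{1}{k+1}\binom{d}{k} = \frac{1}{d+1}\binom{d+1}{k+1}$ to a single application of the defining recursion \eqref{mobius-bn-recursion} for $b_{\mathcal{M}}$. Once this base case is in hand, the rest of the argument is formally identical to the cubical proof, justifying the author's remark that the two proofs are the same.
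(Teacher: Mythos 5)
Your proposal is correct and follows exactly the route the paper intends: the paper simply declares the proof ``identical'' to that of Lemma \ref{mobius-c-symmetry}, and your argument is that cubical proof transported verbatim, with the identity $\frac{1}{k+1}\binom{d}{k}=\frac{1}{d+1}\binom{d+1}{k+1}$ supplying precisely the detail needed to make the base case $s_{\mathcal{M}}(0,d)=-\frac{1}{d+1}b_{\mathcal{M}}(d)=-s_{\mathcal{M}}(d,d)$ go through from the recursion \eqref{mobius-bn-recursion}. No gaps; the only caveat (which you already note) is that since \eqref{mobius-bn-recursion} holds only for $n\ge 3$, the case $d=2$ must be read off the table, just as in the cubical argument.
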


\begin{lemma}\label{mobius-s-monotony}
For all $m \geq 1$, we have 
\begin{eqnarray}
\label{s-mobius-0mod4} && 0 = s_{\mathcal{M}}(0,4m) \leq s_{\mathcal{M}}(1,4m) \leq \cdots \leq s_{\mathcal{M}}(2m,4m) \\
\label{s-mobius-1mod4} && 0 \leq s_{\mathcal{M}}(2m,4m+1) \leq s_{\mathcal{M}}(2m-1,4m+1) \leq \cdots \leq s_{\mathcal{M}}(0,4m+1) \\
\label{s-mobius-2mod4}&&  0 = s_{\mathcal{M}}(0,4m+2) \geq s_{\mathcal{M}}(1,4m+2) \geq \cdots \geq s_{\mathcal{M}}(2m+1,4m+2) \\
\label{s-mobius-3mod4} && 0 \geq s_{\mathcal{M}}(2m+1,4m+3) \geq s_{\mathcal{M}}(2m,4m+3) \geq \cdots \geq s_{\mathcal{M}}(0,4m+3).
\end{eqnarray}
\end{lemma}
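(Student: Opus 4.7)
The plan is to mimic the argument for the cubical analogue (Lemma \ref{mobius-c-monotony}) essentially verbatim, since the only inputs that proof uses are the three-term recursion, the skew-symmetry, explicit values at $d=4$, and the fact that $c_{\mathcal{M}}(0,d)=0$ whenever $d$ is even and $\geq 4$. All four of these inputs have exact simplicial counterparts: Lemmas \ref{mobius-s-recursion} and \ref{mobius-s-symmetry}, the $d=4$ row of Table \ref{mobius-s-j-d-table}, and the vanishing $s_{\mathcal{M}}(0,d)=-\tfrac{1}{d+1}b_{\mathcal{M}}(d)=0$ for even $d\geq 4$, which follows from $b_{\mathcal{M}}(2k)=0$ for $k\geq 2$ by Poincar\'e duality (exactly as in the proof of Lemma \ref{mobius-c-symmetry}, applied to the odd-dimensional closed manifold $\mathcal{M}_a(\partial\sigma^{2k})$).

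I would induct on $m$, using the $d=4$ row of Table \ref{mobius-s-j-d-table} as the base case, and within each step show the cyclic chain of implications \eqref{s-mobius-0mod4} $\Rightarrow$ \eqref{s-mobius-1mod4} $\Rightarrow$ \eqref{s-mobius-2mod4} $\Rightarrow$ \eqref{s-mobius-3mod4} $\Rightarrow$ \eqref{s-mobius-0mod4} at $m+1$. As a representative step, to pass from \eqref{s-mobius-0mod4} to \eqref{s-mobius-1mod4}, I combine the skew-symmetry relation $s_{\mathcal{M}}(2m,4m+1)=-s_{\mathcal{M}}(2m+1,4m+1)$ (valid since $4m+1$ is odd) with the recursion $s_{\mathcal{M}}(2m,4m+1)=s_{\mathcal{M}}(2m+1,4m+1)+s_{\mathcal{M}}(2m,4m)$ to obtain $2s_{\mathcal{M}}(2m,4m+1)=s_{\mathcal{M}}(2m,4m)\geq 0$. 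A reverse induction on $j$ then extends this to the full chain via $s_{\mathcal{M}}(j,4m+1)-s_{\mathcal{M}}(j+1,4m+1)=s_{\mathcal{M}}(j,4m)\geq 0$. The remaining three implications are bookkeeping variants of the same argument: \eqref{s-mobius-1mod4} $\Rightarrow$ \eqref{s-mobius-2mod4} is anchored by the vanishing $s_{\mathcal{M}}(0,4m+2)=0$ rather than by direct use of symmetry, while \eqref{s-mobius-2mod4} $\Rightarrow$ \eqref{s-mobius-3mod4} and \eqref{s-mobius-3mod4} $\Rightarrow$ \eqref{s-mobius-0mod4} at $m+1$ are sign-flipped mirror images of the first two.

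I do not expect any genuine obstacle, since every ingredient has already been set up earlier in this section. The only things to watch carefully are sign management when applying the skew-symmetry across dimensions of different parities, and confirming that the boundary vanishing $s_{\mathcal{M}}(0,d)=0$ holds for every even $d\geq 4$ rather than only those appearing in the displayed table.
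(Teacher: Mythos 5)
Your proposal is correct and matches the paper's intent exactly: the paper simply states that the proof is identical to its cubical counterpart (Lemma \ref{mobius-c-monotony}), and you have verified precisely the inputs needed to transplant that argument, namely Lemmas \ref{mobius-s-recursion} and \ref{mobius-s-symmetry}, the $d=4$ base case, and the vanishing $s_{\mathcal{M}}(0,d)=-\tfrac{1}{d+1}b_{\mathcal{M}}(d)=0$ for even $d\geq 4$. Nothing further is needed.
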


Once again, these lemmas imply Hopf's formula holds for the M\"obius band hyperbolization of a simplicial manifold. 
\begin{theorem} \label{thm:simplicial-mobius}
Let $\mathcal{K}$ be a closed simplicial $2m$-manifold.  Then $$(-1)^m\chi(\mathcal{M}_{a}(\mathcal{K})) \geq 0$$ for any integer $a\le 0$.
\end{theorem}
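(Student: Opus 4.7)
The proof plan is essentially to mirror the argument of Theorem \ref{thm:cubical-mobius} verbatim, now that the three simplicial analogues (Lemmas \ref{mobius-s-recursion}, \ref{mobius-s-symmetry}, and \ref{mobius-s-monotony}) are in hand and the short simplicial $h$-numbers $\widetilde{h}_j(\mathcal{K})$ play the role previously played by $h_j\scc(\mathcal{K})$. The starting point is the identity
\[
\chi(\mathcal{M}_a(\mathcal{K})) = \sum_{j=0}^{2m} s_{\mathcal{M}}(j,2m)\,\widetilde{h}_j(\mathcal{K})
\]
derived just before Table \ref{mobius-s-j-d-table}. Since $\mathcal{K}$ is a closed simplicial manifold, each vertex link $\lk_{\mathcal{K}}(v)$ is a homology sphere, hence Cohen--Macaulay by Reisner's criterion, so its $h$-numbers are nonnegative by Theorem \ref{h-nums-nonnegative}; summing over vertices gives $\widetilde{h}_j(\mathcal{K}) \geq 0$ for all $j$.

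Thus it suffices to show that every coefficient $s_{\mathcal{M}}(j,2m)$ has sign $(-1)^m$. First I would split into the two residue classes. When $2m \equiv 0 \pmod 4$, inequality \eqref{s-mobius-0mod4} of Lemma \ref{mobius-s-monotony} gives
\[
0 = s_{\mathcal{M}}(0,2m) \leq s_{\mathcal{M}}(1,2m) \leq \cdots \leq s_{\mathcal{M}}(m,2m),
\]
so $s_{\mathcal{M}}(j,2m) \geq 0$ for $0 \leq j \leq m$; the symmetry relation $s_{\mathcal{M}}(j,2m) = s_{\mathcal{M}}(2m-j,2m)$ from Lemma \ref{mobius-s-symmetry} (with $d=2m$ even, so the sign $(-1)^d$ is $+1$) then extends nonnegativity to the remaining range $m < j \leq 2m$. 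When $2m \equiv 2 \pmod 4$, inequality \eqref{s-mobius-2mod4} together with the same symmetry gives $s_{\mathcal{M}}(j,2m) \leq 0$ for all $j$. In both cases the sign of every coefficient matches $(-1)^m$, so $(-1)^m \chi(\mathcal{M}_a(\mathcal{K}))$ is a nonnegative combination of the nonnegative quantities $\widetilde{h}_j(\mathcal{K})$ and hence is nonnegative.

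The only genuine work lies upstream, in establishing the three auxiliary lemmas. The paper asserts these are proved exactly as in the cubical setting; the recursion for $s_{\mathcal{M}}(j,d)$ follows from Pascal's identity applied to $\binom{d-j}{d-k}$ in its definition, the symmetry follows by induction on $d$ and reverse induction on $j$ using the base case $s_{\mathcal{M}}(0,d) = -\tfrac{1}{d+1} b_{\mathcal{M}}(d) = -s_{\mathcal{M}}(d,d)$ (which comes from \eqref{mobius-bn-recursion}) together with the vanishing $b_{\mathcal{M}}(2k)=0$ for $k \geq 2$, and the monotonicity is a four-step induction cycling through the four congruence classes mod $4$. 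The one spot where some care is needed is the base step of the monotonicity lemma in dimension $4$, which must be checked against Table \ref{mobius-s-j-d-table}. Assuming these three lemmas, the theorem itself follows in a single paragraph as above; the main conceptual obstacle is essentially bookkeeping between the cubical and simplicial cases, which is alleviated by the fact that the structural identities among the $s_{\mathcal{M}}(j,d)$ parallel those among the $c_{\mathcal{M}}(j,d)$.
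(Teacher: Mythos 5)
Your proposal is correct and follows exactly the route the paper intends: the paper deliberately omits the proof of Theorem \ref{thm:simplicial-mobius} because it is the verbatim analogue of the proof of Theorem \ref{thm:cubical-mobius}, using Lemmas \ref{mobius-s-symmetry} and \ref{mobius-s-monotony} together with the nonnegativity of the short simplicial $h$-numbers $\widetilde{h}_j(\mathcal{K})$, which is precisely what you do. Your supporting remarks on the auxiliary lemmas (including the base identity $s_{\mathcal{M}}(0,d)=-\tfrac{1}{d+1}b_{\mathcal{M}}(d)=-s_{\mathcal{M}}(d,d)$) also check out.
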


\section{The combinatorics of the Gromov hyperbolization} \label{sec:gromov}

\subsection{Euler characteristic of the cubical Gromov hyperbolization}

As in the case of the M\"obius band hyperbolization, we define the cubical Gromov hyperbolization coefficient $a_{\mathcal{G}}(n):=\chi(\mathcal{G}_a(I^n)) - \chi(\mathcal{G}_a(\partial I^n))$ so that $\chi(\mathcal{G}_a(\K)) = \sum_{k=0}^da_{\mathcal{G}}(k)f_k(\K)$ for any cubical $d$-manifold $\K$.  We begin by deriving a recursive formula for these hyperbolization coefficients.  By our construction, $a_{\G}(0) = 1$, $a_{\G}(1) = -1$, and $a_{\G}(2) = a \leq -1$.  

We need to find a recursive formula for $a_{\mathcal{G}}(n)=\chi(\mathcal{G}_a(I^n)) - \chi(\mathcal{G}_a(\partial I^n))$. 

Let $X^{n}=\mathcal{G}_a(I^{n})$ and $Y^{n-1}=\mathcal{G}_{a}(\partial I^{n})$. Recall that by construction $Y^{n-1}$ admits a reflection $r$ with fixed point set $B^{n-1}$ isomorphic to $\mathcal{G}(I^{n-2})$, splitting $Y^{n-1}$ into two pieces $A^{n-1}$ and $r(A^{n-1})$, identified along $B^{n-1}$. Moreover $X^{n}$ is obtained from $Y^{n-1}\times [-1,1]$ by identifying $r(A^{n-1})\times\{-1\}$ with $r(A^{n-1})\times\{1\}$.

Then we have
\begin{align*}
a_{\mathcal{G}}(n)&=\chi(X^{n}) - \chi(Y^{n-1})\\
&=\chi(Y^{n-1}\times [-1,1])-\chi(A^{n-1})-\chi(Y^{n-1})\\
&= -\chi(A^{n-1}).
\end{align*}

On the other hand,
\begin{displaymath}
\chi(Y^{n-1})=2\chi(A^{n-1})-\chi(B^{n-2}),
\end{displaymath}
so that
\begin{align*}
\chi(A^{n-1})&=\frac{1}{2}\left(\chi(B^{n-2})+\chi(Y^{n-1})\right)\\
&=\frac{1}{2}\left(\chi(\mathcal{G}(\partial I^{n-1})+\chi(\mathcal{G}(\partial I^{n})\right).
\end{align*}

Thus by the inductive nature of the construction,
\begin{equation} \label{gromov-an-recursion}
a_{\G}(n) = -\frac{1}{2} \left(\sum_{k=0}^{n-2}a_{\G}(k)f_k(\partial I^{n-1}) + \sum_{k=0}^{n-1}a_{\G}(k)f_k(\partial I^n)\right).
\end{equation}

\renewcommand{\arraystretch}{1.5}

\begin{center}
\begin{table} [hbt]
\begin{tabular}{|c||c|c|c|c|c|c|c|c|} \hline
$n$ & $0$ & $1$ & $2$ & $3$ & $4$ & $5$ & $6$ & $7$ \\ \hline
$a_{\G}(n)$ & $1$ & $-1$ & $a$ & $2-3a$ & $2-3a$ & $-26+35a$ & $-26+35a$ & $594-791a$\\ \hline
\end{tabular}
\caption{Values of $a_{\G}(n)$ for small $n$}
\end{table}
\end{center}

As in the M\"obius band hyperbolization of a cubical complex, we define $$c_{\G}(j,d):=\sum_{k=j}^d2^{-k}{d-j \choose d-k}a_{\G}(k),$$ so that $\chi(\G_a(\K)) = \sum_{j=0}^d c_{\G}(j,d) h_j\scc(\K)$ for any cubical $d$-manifold $\K$.  Values of $c_{\mathcal{G}}(j,d)$ for small $d$ are shown in the following table: 

\renewcommand{\arraystretch}{1.5}
\begin{center}
\begin{table}[hbt]
\begin{tabular}{|c||c|c|c|c|c|c|c|} \hline 
$d \backslash j$ & $0$ & $1$ & $2$ & $3$ & $4$ & $5$ & $6$ \\ \hline
$2$ & $\frac{a}{4}$ & $\frac{a-2}{4}$ & $\frac{a}{4}$ & & & & \\ \hline
$3$ & $\frac{3a-2}{8}$ & $\frac{a-2}{8}$ & $\frac{-a+2}{8}$ & $\frac{-3a+2}{8}$ & & & \\ \hline
$4$ & $\frac{-3a+2}{16}$ & $\frac{-9a+6}{16}$ & $\frac{-11a+10}{16}$ & $\frac{-9a+6}{16}$ & $\frac{-3a+2}{16}$ & & \\ \hline
$5$ & $\frac{-35a+26}{32}$ & $\frac{-29a+22}{32}$ & $\frac{-11a+10}{32}$ & $\frac{11a-10}{32}$ & $\frac{29a-22}{32}$ & $\frac{35a-26}{32}$ & \\ \hline
$6$ & $\frac{35a-26}{64}$ & $\frac{105a-78}{64}$ & $\frac{163a-126}{64}$ & $\frac{185a-142}{64}$ & $\frac{163a-126}{64}$ & $\frac{105a-78}{64}$ & $\frac{35a-26}{64}$ \\ \hline
\end{tabular}
\label{gromov-c-j-d}
\caption{Values of $c_{\G}(j,d)$ for small $j$ and $d$.}
\end{table}
\end{center}

Again we will establish Gromov versions of Lemmas \ref{mobius-c-recursion}, \ref{mobius-c-symmetry}, and \ref{mobius-c-monotony}. The proof of the following recursive formula is identical to that of Lemma \ref{mobius-c-recursion}.

\begin{lemma} \label{gromov-c-recursion}
For all $d$ and all $0 \leq j < d$, $$c_{\G}(j,d) = c_{\G}(j+1,d) + c_{\G}(j,d-1).$$
\end{lemma}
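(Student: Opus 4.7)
The plan is to mimic the proof of Lemma \ref{mobius-c-recursion} verbatim, since the argument used there depends only on the definition of $c_{\mathcal{M}}(j,d)$ as a weighted sum $\sum_{k=j}^d 2^{-k}\binom{d-j}{d-k}a_{\mathcal{M}}(k)$, not on any special properties of the coefficients $a_{\mathcal{M}}(k)$ themselves. Since $c_{\G}(j,d)$ is defined by the exactly analogous formula
\[
c_{\G}(j,d) = \sum_{k=j}^d 2^{-k}\binom{d-j}{d-k} a_{\G}(k),
\]
the argument carries over without modification.

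Concretely, I would start from the definition of $c_{\G}(j,d)$ and apply Pascal's identity
\[
\binom{d-j}{d-k} = \binom{d-j-1}{d-k-1} + \binom{d-j-1}{d-k}
\]
inside the sum. This splits $c_{\G}(j,d)$ into two sums. In the first sum, the $k=j$ term vanishes because $\binom{d-j-1}{d-j}=0$, so the index can be shifted to run from $k=j+1$ to $d$, yielding exactly $c_{\G}(j+1,d)$. In the second sum, the $k=d$ term vanishes because $\binom{d-j-1}{-1}=0$, so the sum effectively runs from $k=j$ to $d-1$, yielding exactly $c_{\G}(j,d-1)$. Combining gives the stated identity.

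There is essentially no obstacle here: the recursion is a purely formal consequence of Pascal's identity applied to the defining binomial coefficients, independent of whether the underlying hyperbolization coefficients satisfy the M\"obius band recursion or the more complicated Gromov recursion \eqref{gromov-an-recursion}. The main task is simply to write out the bookkeeping on the index shifts carefully, as was done in Lemma \ref{mobius-c-recursion}.
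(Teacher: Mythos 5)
Your proposal is correct and is exactly the paper's argument: the paper simply states that the proof is identical to that of Lemma~\ref{mobius-c-recursion}, which proceeds via Pascal's identity and the same two index shifts you describe. Nothing further is needed.
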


In order to show that the coefficients $c_{\G}(j,d)$ are skew-symmetric and monotone as we have before, we require an additional lemma in this case. 

\begin{lemma} \label{gromov-c-top-terms}
For all $d$, 
$$2[c_{\G}(0,d) + c_{\G}(d,d)] = c_{\G}(d-1,d-1) - c_{\G}(0,d-1).$$
\end{lemma}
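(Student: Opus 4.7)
The plan is to reduce both sides of the claimed identity to expressions involving $\chi(\mathcal{G}_a(\partial I^{d-1}))$ and $\chi(\mathcal{G}_a(\partial I^d))$, and then to apply the recursion \eqref{gromov-an-recursion}. The first step is purely bookkeeping. Since $f_k(\partial I^d)=\binom{d}{k}2^{d-k}$ for $0\le k\le d-1$ and since the $k=d$ term of the defining sum for $c_{\mathcal{G}}(0,d)$ is precisely $c_{\mathcal{G}}(d,d)=2^{-d}a_{\mathcal{G}}(d)$, one immediately obtains
$$c_{\mathcal{G}}(0,d)-c_{\mathcal{G}}(d,d)=\sum_{k=0}^{d-1}2^{-k}\binom{d}{k}a_{\mathcal{G}}(k)=2^{-d}\chi\bigl(\mathcal{G}_a(\partial I^d)\bigr),$$
and the analogous identity with $d$ replaced by $d-1$ gives $c_{\mathcal{G}}(0,d-1)-c_{\mathcal{G}}(d-1,d-1)=2^{-(d-1)}\chi(\mathcal{G}_a(\partial I^{d-1}))$.

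Next I would rewrite the recursion \eqref{gromov-an-recursion} in the compact form
$$a_{\mathcal{G}}(d)=-\tfrac{1}{2}\bigl(\chi(\mathcal{G}_a(\partial I^{d-1}))+\chi(\mathcal{G}_a(\partial I^d))\bigr),$$
valid for $d\ge 3$; this is immediate because $\partial I^n$ carries no faces of dimension $\ge n$, so the two sums inside \eqref{gromov-an-recursion} are exactly the Euler characteristics of the hyperbolized boundary spheres. Multiplying this identity by $2\cdot 2^{-d}$ and substituting both bookkeeping identities produces
$$2\,c_{\mathcal{G}}(d,d)=-\tfrac{1}{2}\bigl(c_{\mathcal{G}}(0,d-1)-c_{\mathcal{G}}(d-1,d-1)\bigr)-\bigl(c_{\mathcal{G}}(0,d)-c_{\mathcal{G}}(d,d)\bigr),$$
and a one-line rearrangement produces the desired identity $2[c_{\mathcal{G}}(0,d)+c_{\mathcal{G}}(d,d)]=c_{\mathcal{G}}(d-1,d-1)-c_{\mathcal{G}}(0,d-1)$.

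There is no serious obstacle here: the whole argument is algebraic, and the only creative step is spotting the bookkeeping identity that converts the ``end'' differences $c_{\mathcal{G}}(0,\cdot)-c_{\mathcal{G}}(\cdot,\cdot)$ into boundary-sphere Euler characteristics. The one subtlety is that \eqref{gromov-an-recursion} is derived from the construction $\mathcal{G}(I^n)=(\mathcal{G}(\partial I^{n-1})\times[-1,1])/\!\sim$, which is only in force for $n\ge 3$, so the base case $d=3$ should be verified directly against Table~\ref{gromov-c-j-d}; this is a one-line check.
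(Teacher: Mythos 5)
Your proof is correct and is essentially the paper's own argument: both isolate the two bookkeeping identities $c_{\G}(0,d)=\sum_{k=0}^{d}2^{-k}\binom{d}{k}a_{\G}(k)$ and $c_{\G}(d,d)=2^{-d}a_{\G}(d)$ and then substitute the recursion \eqref{gromov-an-recursion}, so your packaging of the partial sums as $2^{-d}\chi(\G_a(\partial I^{d}))$ is only a cosmetic relabeling of the same quantities. (One small quibble: since the recursion already holds for $n=3$, the case $d=3$ needs no separate table check --- it is the cases $d\le 2$, which fall outside the recursion's range, that would require direct verification.)
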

\begin{proof}
We use the defining equations for the coefficients $c_{\G}(j,d)$ to explicitly compute that 
\begin{equation} \label{gromov-c-0-d}
c_{\G}(0,d) = \frac{1}{2^d} \sum_{k=0}^d {d \choose k} 2^{d-k}a_{\G}(k),
\end{equation} 
and
\begin{equation} \label{gromov-c-d-d}
c_{\G}(d,d) = \frac{1}{2^d} a_{\G}(d).
\end{equation}
Adding $a_{\G}(d) + a_{\G}(d-1)$ to both sides of the recursion equation \eqref{gromov-an-recursion} for the hypberbolization coefficients  gives 
\begin{displaymath}
-a_{\G}(d) + a_{\G}(d-1) = \sum_{k=0}^{d-1} {d-1 \choose k}2^{d-1-k} a_{\G}(k) + \sum_{k=0}^d{d \choose k}2^{d-k}a_{\G}(k).
\end{displaymath}
By Equation \eqref{gromov-c-d-d}, the left side of this equation is $-2^dc_{\G}(d,d) + 2^{d-1} c_{\G}(d-1,d-1)$ and by Equation \eqref{gromov-c-0-d}, the right side is $2^{d-1}c_{\G}(0,d-1) + 2^dc_{\G}(0,d)$.
\end{proof}

\begin{lemma} \label{gromov-c-symmetry}
For all $j$ and all $d$, we have $$c_{\G}(j,d) = (-1)^dc_{\G}(d-j,d).$$
\end{lemma}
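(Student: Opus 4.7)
The proof mirrors the strategy of Lemma~\ref{mobius-c-symmetry}, proceeding by induction on $d$ and, within each fixed $d$, by reverse induction on $j$. The inner reverse induction step is completely parallel to the Möbius case: given that $c_{\mathcal{G}}(j+1,d) = (-1)^d c_{\mathcal{G}}(d-j-1,d)$ holds and that $c_{\mathcal{G}}(j,d-1) = (-1)^{d-1} c_{\mathcal{G}}(d-1-j,d-1)$ holds by the outer inductive hypothesis, two applications of Lemma~\ref{gromov-c-recursion} together with cancellation of the mixed terms yield $c_{\mathcal{G}}(j,d) = (-1)^d c_{\mathcal{G}}(d-j,d)$ exactly as before. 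So the whole task reduces to establishing the base case of the inner induction, namely $c_{\mathcal{G}}(0,d) = (-1)^d c_{\mathcal{G}}(d,d)$.

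This base case is where the argument must depart from the Möbius one, since $a_{\mathcal{G}}(d)$ does not vanish in even dimensions. The plan is to split into parities, relying on Lemma~\ref{gromov-c-top-terms} together with the telescoping identity that follows by summing Lemma~\ref{gromov-c-recursion}:
\[
c_{\mathcal{G}}(0,d) - c_{\mathcal{G}}(d,d) \;=\; \sum_{j=0}^{d-1} c_{\mathcal{G}}(j,d-1).
\]
When $d$ is even, $d-1$ is odd and the outer inductive hypothesis gives $c_{\mathcal{G}}(j,d-1) = -c_{\mathcal{G}}(d-1-j,d-1)$; pairing $j$ with $d-1-j$ collapses the right-hand sum to $0$, so $c_{\mathcal{G}}(0,d) = c_{\mathcal{G}}(d,d) = (-1)^d c_{\mathcal{G}}(d,d)$. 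When $d$ is odd I would instead invoke Lemma~\ref{gromov-c-top-terms}, which reads $2[c_{\mathcal{G}}(0,d) + c_{\mathcal{G}}(d,d)] = c_{\mathcal{G}}(d-1,d-1) - c_{\mathcal{G}}(0,d-1)$; since $d-1$ is now even, the outer inductive hypothesis gives $c_{\mathcal{G}}(d-1,d-1) = c_{\mathcal{G}}(0,d-1)$, so the right-hand side vanishes and $c_{\mathcal{G}}(0,d) = -c_{\mathcal{G}}(d,d) = (-1)^d c_{\mathcal{G}}(d,d)$.

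The main obstacle, relative to the Möbius case, is precisely this base of the inner induction: without the vanishing $a_{\mathcal{G}}(d) = 0$ in even dimensions to fall back on, one needs the supplementary Lemma~\ref{gromov-c-top-terms} to close the loop. Once it is in hand the two parities are handled by complementary ingredients (telescoping for $d$ even, the top-terms lemma for $d$ odd), and these fit together so that the symmetry propagates consistently up the dimensions. The small base cases $d=0,1$ are immediate from the table of values.
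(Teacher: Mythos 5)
Your proof is correct, and its overall architecture (outer induction on $d$, reverse induction on $j$ with the Möbius-style cancellation, the telescoped identity $c_{\G}(0,d)-c_{\G}(d,d)=\sum_{j=0}^{d-1}c_{\G}(j,d-1)$, and Lemma~\ref{gromov-c-top-terms} for the odd case) matches the paper's. The one place you genuinely diverge is the base case with $d$ even: the paper shows $\sum_{j=0}^{d-1}c_{\G}(j,d-1)=0$ by a topological argument --- $\G(\partial I^d)$ is a closed odd-dimensional manifold, so $\chi(\G(\partial I^d))=0$ by Poincar\'e duality, and since $h_j\scc(\partial I^d)=2^d$ for every $j$ the vanishing Euler characteristic forces the coefficient sum to vanish. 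You instead observe that $d-1$ is odd, invoke the skew-symmetry $c_{\G}(j,d-1)=-c_{\G}(d-1-j,d-1)$ already available from the outer inductive hypothesis, and note that the fixed-point-free involution $j\mapsto d-1-j$ pairs the $d$ terms of the sum into cancelling couples. Both are valid; your version is more self-contained and purely combinatorial, needing nothing beyond the recursion and the induction already in play, while the paper's version trades that for an appeal to topology plus the explicit computation of the short cubical $h$-numbers of $\partial I^d$. Either way the odd case closes identically via Lemma~\ref{gromov-c-top-terms}, so the induction is complete.
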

\begin{proof}
We begin by proving the lemma in the case that $j=d$.  We prove this claim by induction, with the base cases handled by the data in Table \ref{gromov-c-j-d}.  By Lemma \ref{gromov-c-recursion}, 
\begin{eqnarray*}
c_{\G}(0,d) &=& c_{\G}(1,d) + c_{\G}(0,d-1) \\
&=& c_{\G}(2,d) + c_{\G}(1,d-1) + c_{\G}(0,d-1) \\
&=& \cdots \\
&=& c_{\G}(d,d) + \sum_{j=0}^{d-1} c_{\G}(j,d-1).
\end{eqnarray*}
When $d$ is even, we know $\chi(\G(\partial I^d)) = 0$ by Poincar\'e duality.  On the other hand, $h_j\scc(\partial I^d) = 2^{d}$ for all $j$ and so $$0 = \chi(\G(\partial I^d)) = \sum_{j=0}^{d-1}c_{\G}(j,d) 2^{d}.$$  Thus $c_{\G}(0,d) = c_{\G}(d,d)$ when $d$ is even. 

When $d$ is odd, $c_{\G}(0,d-1) = c_{\G}(d-1,d-1)$ by our inductive hypothesis.  Thus Lemma \ref{gromov-c-top-terms} implies $c_{\G}(0,d) = -c_{\G}(d,d).$  

Finally, to prove the lemma for $j < d$, we proceed by induction on $d$ and reverse induction on $j$ as in the proof of Lemma \ref{mobius-c-symmetry}: 
\begin{eqnarray*}
c_{\G}(j,d) &=& c_{\G}(j+1,d) + c_{\G}(j,d-1) \\
&=& (-1)^dc_{\G}(d-j-1,d) + (-1)^{d-1}c_{\G}(d-j-1,d-1) \\
&=& (-1)^d[c_{\G}(d-j,d) + c_{\G}(d-j-1,d-1)] + (-1)^{d-1}c_{\G}(d-j-1,d-1) \\
&=& (-1)^dc_{\G}(d-j,d).
\end{eqnarray*}
\end{proof}

Finally, we prove the Gromov version of Lemma \ref{mobius-c-monotony}.  We must account for the the even-dimensional cases where $c_{\G}(0,4m)$ and $c_{\G}(0,4m+2)$ are no longer guaranteed to be zero.

\begin{lemma}\label{gromov-c-monotony}
For all $m \geq 1$, and $a \leq -1$ we have 
\begin{eqnarray}
\label{gromov-0mod4} && 0 \leq c_{\G}(0,4m) \leq c_{\G}(1,4m) \leq \cdots \leq c_{\G}(2m,4m) \\
\label{gromov-1mod4} && 0 \leq c_{\G}(2m,4m+1) \leq c_{\G}(2m-1,4m+1) \leq \cdots \leq c_{\G}(0,4m+1) \\
\label{gromov-2mod4} &&  0 \geq  c_{\G}(0,4m+2) \geq c_{\G}(1,4m+2) \geq \cdots \geq c_{\G}(2m+1,4m+2) \\
\label{gromov-3mod4} && 0 \geq c_{\G}(2m+1,4m+3) \geq c_{\G}(2m,4m+3) \geq \cdots \geq c_{\G}(0,4m+3).
\end{eqnarray}
\end{lemma}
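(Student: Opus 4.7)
The plan is to mirror the structure of the proof of Lemma~\ref{mobius-c-monotony}, proceeding by induction on $m$ and passing through the four congruence classes modulo $4$ within each inductive cycle. Table~\ref{gromov-c-j-d} supplies the base case $m=1$. The inductive step breaks into four implications: (i) \eqref{gromov-0mod4} for $m$ implies \eqref{gromov-1mod4} for $m$; (ii) \eqref{gromov-1mod4} for $m$ implies \eqref{gromov-2mod4} for $m$; (iii) \eqref{gromov-2mod4} for $m$ implies \eqref{gromov-3mod4} for $m$; and (iv) \eqref{gromov-3mod4} for $m$ implies \eqref{gromov-0mod4} for $m+1$.

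Steps (i) and (iii) will be formally identical to their M\"obius counterparts. Combining the recursion of Lemma~\ref{gromov-c-recursion} with the skew-symmetry of Lemma~\ref{gromov-c-symmetry} at the central index yields an identity of the form $2c_{\G}(2m,4m+1) = c_{\G}(2m,4m)$ (respectively $2c_{\G}(2m+1,4m+3) = c_{\G}(2m+1,4m+2)$), which pins down the sign of the middle term. Then Lemma~\ref{gromov-c-recursion} in the form $c_{\G}(j,d) - c_{\G}(j+1,d) = c_{\G}(j,d-1)$ propagates the sign information outward along the row, since every term of the previous (odd-dimensional or even-dimensional) row has a uniform sign by the inductive hypothesis.

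The genuinely new feature arises in steps (ii) and (iv), because in the Gromov setting the endpoint entries $c_{\G}(0,4m+2)$ and $c_{\G}(0,4m+4)$ are no longer forced to vanish. This is precisely why Lemma~\ref{gromov-c-top-terms} was established. For even $d$, applying symmetry to both sides of that lemma (so that $c_{\G}(d,d) = c_{\G}(0,d)$ on the left and $c_{\G}(d-1,d-1) = -c_{\G}(0,d-1)$ on the right) collapses it to the clean identity
\[
c_{\G}(0,d) = -\tfrac{1}{2}\, c_{\G}(0,d-1).
\]
Specializing to $d = 4m+2$ and invoking \eqref{gromov-1mod4} gives $c_{\G}(0,4m+2) \le 0$; specializing to $d = 4m+4$ and invoking \eqref{gromov-3mod4} gives $c_{\G}(0,4m+4) \ge 0$. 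These are exactly the endpoint inequalities needed to start the chains in \eqref{gromov-2mod4} and \eqref{gromov-0mod4}.

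Once the endpoint is controlled, the monotonicity of each chain is routine: the recursion $c_{\G}(j+1,d) = c_{\G}(j,d) - c_{\G}(j,d-1)$ shifts the running value in the desired direction because every $c_{\G}(j,d-1)$ in the preceding row has a fixed sign. The main obstacle, therefore, is fixing the signs of the leftmost entries in the even-dimensional rows, and the whole point of Lemma~\ref{gromov-c-top-terms} is to reduce this obstacle to a simple halving identity; after that, the rest of the proof is essentially bookkeeping along the lines of Lemma~\ref{mobius-c-monotony}.
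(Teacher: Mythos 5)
Your proof is correct and follows essentially the same route as the paper's: induction on $m$ through the four congruence classes, with the recursion of Lemma~\ref{gromov-c-recursion} and the skew-symmetry of Lemma~\ref{gromov-c-symmetry} handling the odd-dimensional rows exactly as in the M\"obius case, and Lemma~\ref{gromov-c-top-terms} combined with symmetry (yielding $2c_{\G}(0,d) = -c_{\G}(0,d-1)$ for even $d$) supplying the endpoint signs $c_{\G}(0,4m+2)\le 0$ and $c_{\G}(0,4m+4)\ge 0$ in the even-dimensional rows. There are no gaps; your write-up is in fact slightly more explicit than the paper's about which identities pin down the central and endpoint terms.
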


\begin{proof}
As before prove the claim by induction on $m$, showing that \eqref{gromov-0mod4} implies \eqref{gromov-1mod4}  which in turn implies \eqref{gromov-2mod4} which implies \eqref{gromov-3mod4} which implies \eqref{gromov-0mod4} in dimension $4(m+1)$.  Table \ref{gromov-c-j-d} shows that the claim holds for the values $c_{\mathcal{M}}(j,4)$, and hence \eqref{gromov-0mod4} is valid when $m=1$.  In many cases, the computations are identical to those done in the proof of Lemma \ref{mobius-c-monotony}, so we will omit those calculations. 

The proof that  the inequalities \eqref{gromov-0mod4} imply the inequalities \eqref{gromov-1mod4} relies only on the recursion in Lemma \ref{gromov-c-recursion} and skew-symmetry of Lemma \ref{gromov-c-symmetry} and is identical to the one in Lemma \ref{mobius-c-monotony}.  

In order to show that  the inequalities \eqref{gromov-1mod4} imply  the inequalities\eqref{gromov-2mod4}, we must first show that $c_{\G}(0,4m+2) \leq 0$.  This follows by applying Lemma \ref{gromov-c-symmetry} to Lemma \ref{gromov-c-top-terms} and seeing that $2c_{\G}(0,4m+2) = - c_{\G}(0,4m+1)$.  The remainder of the proof continues as before using the recursion \eqref{gromov-c-recursion}. 

The proof that the inequalities \eqref{gromov-2mod4} imply \eqref{gromov-3mod4} is identical to the M\"obius case.  Once again, in order to show that \eqref{gromov-3mod4} implies \eqref{gromov-0mod4} in dimension $4m+4$ we must first establish that $c_{\G}(0,4m+4) \geq 0$, but again this follows by applying Lemma \ref{gromov-c-symmetry} to Lemma \ref{gromov-c-top-terms}.
\end{proof}

As in the cubical case, Lemmas \ref{gromov-c-symmetry} and \ref{gromov-c-monotony}, together with the fact that the short $h$-numbers of a cubical complex are nonnegative, imply the Sign Conjecture for the Gromov hyperbolization. 

\begin{theorem} \label{thm:cubical-gromov}
Let $\K$ be a closed cubical $2m$-manifold.  Then $$(-1)^m\chi(\G_a(\K)) \geq 0.$$
\end{theorem}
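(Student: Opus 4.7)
The plan is to run the same argument as in the proof of Theorem \ref{thm:cubical-mobius}, now using the Gromov versions of the skew-symmetry and monotonicity lemmas that have just been established. By the very definition of the coefficients $c_{\G}(j,d)$ in terms of the hyperbolization coefficients $a_{\G}(k)$, we have
\[
\chi(\G_a(\K)) \;=\; \sum_{j=0}^{2m} c_{\G}(j,2m)\, h_j\scc(\K).
\]
Since $\K$ is a closed cubical $2m$-manifold, the link of every vertex is Cohen-Macaulay, so $h_j\scc(\K) \geq 0$ for every $j$. It therefore suffices to show that $(-1)^m c_{\G}(j,2m) \geq 0$ uniformly in $j$, after which the conclusion follows termwise.

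First I would treat the case $m$ even, so $2m \equiv 0 \pmod 4$. The chain of inequalities \eqref{gromov-0mod4} in Lemma \ref{gromov-c-monotony} gives $c_{\G}(j,2m) \geq 0$ for $0 \leq j \leq m$. The skew-symmetry $c_{\G}(j,2m) = c_{\G}(2m-j,2m)$, which is the content of Lemma \ref{gromov-c-symmetry} when $d = 2m$ is even, then propagates nonnegativity to the remaining range $m \leq j \leq 2m$. Hence every summand is nonnegative and $\chi(\G_a(\K)) \geq 0 = (-1)^m \cdot 0$.

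Next I would handle the case $m$ odd, so $2m \equiv 2 \pmod 4$. In this case \eqref{gromov-2mod4} gives $c_{\G}(j,2m) \leq 0$ for $0 \leq j \leq m$, and the same evenness-of-$d$ skew-symmetry extends this bound to $m \leq j \leq 2m$. Thus every summand is nonpositive, giving $\chi(\G_a(\K)) \leq 0$, i.e. $(-1)^m \chi(\G_a(\K)) \geq 0$. The substantive work has all been done in Lemmas \ref{gromov-c-symmetry} and \ref{gromov-c-monotony}; what remains here is only bookkeeping about the parity of $m$, and no step is expected to pose a genuine obstacle.
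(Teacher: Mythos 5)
Your proposal is correct and is essentially identical to the paper's own argument: the paper likewise deduces the theorem directly from Lemmas \ref{gromov-c-symmetry} and \ref{gromov-c-monotony} together with the nonnegativity of the short cubical $h$-numbers, exactly as in the proof of Theorem \ref{thm:cubical-mobius}. The only cosmetic remark is that the sign conclusions implicitly use the standing hypothesis $a\le -1$ from the Gromov construction (under which Lemma \ref{gromov-c-monotony} is stated), and that the low-dimensional case $2m=2$ is read off from the table rather than from the lemma, but the paper does the same.
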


\subsection{Euler characteristic of the simplicial Gromov hyperbolization}

As before we define the simplicial Gromov hyperbolization coefficients $b_{\G}(n):= \chi(\G_a(\sigma^n)) - \chi(\G_a(\partial\sigma^n))$ so that $\chi(\G_a(\Delta)) = \sum_{k=0}^db_{\G}(k)f_k(\Delta)$ for any simplicial $d$-manifold $\Delta$.  

We need to find a recursive formula for $b_{\mathcal{G}}(n)=\chi(\mathcal{G}_a(\sigma^n)) - \chi(\mathcal{G}_a(\partial \sigma^n))$. 

As in the cubical case, let $X^{n}=\mathcal{G}_a(\sigma^{n})$ and $Y^{n-1}=\mathcal{G}_{a}(\partial \sigma^{n})$. Recall that by construction $Y^{n-1}$ admits a reflection $r$ with fixed point set $B^{n-1}$ isomorphic to $\mathcal{G}(\sigma^{n-2})$, splitting $Y^{n-1}$ into two pieces $A^{n-1}$ and $r(A^{n-1})$, identified along $B^{n-1}$. Moreover $X^{n}$ is obtained from $Y^{n-1}\times [-1,1]$ by identifying $r(A^{n-1})\times\{-1\}$ with $r(A^{n-1})\times\{1\}$.

By repeating the analysis that was used in the cubical Gromov hyperbolization replacing the $n$-cube with the $n$-simplex throughout, we once again derive the recursion
\begin{equation} \label{gromov-bn-recursion}
b_{\G}(n) = -\frac{1}{2} \left( \sum_{k=0}^{n-2} b_{\G}(k)f_k(\partial\sigma^{n-1}) + \sum_{k=0}^{n-1} b_{\G}(k) f_k(\partial\sigma^n)\right).
\end{equation} 

\begin{center}
\begin{table} [hbt]
\begin{tabular}{|c||c|c|c|c|c|c|c|c|} \hline
$n$ & $0$ & $1$ & $2$ & $3$ & $4$ & $5$ & $6$ & $7$\\ \hline 
$b_{\G}(n)$ & $1$ & $-1$ & $a$ & $1-2a$ & $1-2a$ & $-6+11a$ & $-6+11a$ & $55-100a$\\ \hline
\end{tabular}
\caption{Values of $b_{\G}(n)$ for small $n$}
\end{table}
\end{center}

As in the M\"obius band hyperbolization of a simplicial complex, we define $$s_{\G}(j,d) = \sum_{k=j}^d \frac{1}{k+1}{d-j \choose d-k}b_{\G}(k),$$ so that $\chi(\G_a(\Delta)) = \sum_{j=0}^ds_{\G}(j,d)\widetilde{h}_j(\Delta)$ for any simplicial $d$-manifold $\Delta$.  

\renewcommand{\arraystretch}{1.5}
\begin{center}
\begin{table} [hbt] \label{gromov-s-j-d}
\begin{tabular}{|c||c|c|c|c|c|c|c|} \hline
$d \backslash j$ & $0$ & $1$ & $2$ & $3$ & $4$ & $5$ & $6$ \\ \hline 
$2$ & $\frac{a}{3}$ & $\frac{a}{3}-\frac{1}{2}$ & $\frac{a}{3}$ & & & & \\ \hline
$3$ & $\frac{a}{2}-\frac{1}{4}$ & $\frac{a}{6}-\frac{1}{4}$ & $-\frac{a}{6}+\frac{1}{4}$ & $-\frac{a}{2}+\frac{1}{4}$ & & & \\ \hline
$4$ & $-\frac{2a}{5} + \frac{1}{5}$ & $-\frac{9a}{10} + \frac{9}{20}$ & $-\frac{16a}{15}+\frac{7}{10}$ & $-\frac{9a}{10} + \frac{9}{20}$ & $-\frac{2a}{5} + \frac{1}{5}$ & & \\ \hline
$5$ & $-\frac{11a}{6}+1$ & $-\frac{43a}{30}+\frac{4}{5}$ & $-\frac{8a}{15}+\frac{7}{20}$ & $\frac{8a}{15}-\frac{7}{20}$ & $\frac{43a}{30}-\frac{4}{5}$ & $\frac{11a}{6}-1$ & \\ \hline
$6$ & $\frac{11a}{7}-\frac{6}{7}$ & $\frac{143a}{42}-\frac{13}{7}$ & $\frac{508a}{105}-\frac{93}{35}$ & $\frac{188a}{35}-\frac{421}{140}$ & $\frac{508a}{105}-\frac{93}{35}$ & $\frac{143a}{42}-\frac{13}{7}$ & $\frac{11a}{7}-\frac{6}{7}$ \\ \hline
\end{tabular}
\caption{Values of $s_{\G}(j,d)$ for small $j$ and $d$}
\end{table}
\end{center}

The proofs of the following lemmas are identical to their analogues for the cubical Gromov hyperbolization, and we omit their proofs. 

\begin{lemma} \label{gromov-s-recursion}
For all $d$ and all $0 \leq j <d$, $$s_{\mathcal{G}}(j,d) = s_{\mathcal{G}}(j+1,d) + s_{\mathcal{G}}(j,d-1).$$
\end{lemma}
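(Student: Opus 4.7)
The plan is to adapt verbatim the argument of Lemma \ref{mobius-c-recursion} (equivalently, its cubical Gromov analogue, Lemma \ref{gromov-c-recursion}). The manipulation there depends only on the binomial factor ${d-j \choose d-k}$ and is entirely insensitive to the weight accompanying the hyperbolization coefficient. Here the weight $2^{-k}a_{\mathcal{M}}(k)$ used in Lemma \ref{mobius-c-recursion} is simply replaced by $\frac{1}{k+1}b_{\G}(k)$, which functions as an inert multiplier throughout the computation.

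Concretely, starting from the definition
$$s_{\G}(j,d) = \sum_{k=j}^d \frac{1}{k+1}{d-j \choose d-k}b_{\G}(k),$$
I would apply Pascal's identity ${d-j \choose d-k}={d-j-1 \choose d-k-1}+{d-j-1 \choose d-k}$ termwise and split the result into two sums.

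Then I reindex. Writing ${d-j-1 \choose d-k-1}={(d-1)-j \choose (d-1)-k}$, the first sum matches the definition of $s_{\G}(j,d-1)$, except that its range $k=j,\ldots,d$ overshoots by one term at $k=d$; this extra term is zero because ${d-j-1 \choose -1}=0$. Writing ${d-j-1 \choose d-k}={d-(j+1) \choose d-k}$, the second sum matches $s_{\G}(j+1,d)$ except for an extra term at $k=j$, which vanishes because ${d-j-1 \choose d-j}=0$. Adding the two sums recovers $s_{\G}(j,d-1)+s_{\G}(j+1,d)$, as required.

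I expect no substantive obstacle: the simplicial Gromov coefficients $b_{\G}(k)$ and the factor $\frac{1}{k+1}$ enter only as opaque weights, so no information about the recursion \eqref{gromov-bn-recursion} for $b_{\G}$ is used. The only verifications are the two boundary-term vanishings above, each of which follows from the standard convention ${n \choose r}=0$ for $r<0$ or $r>n$.
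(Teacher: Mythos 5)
Your proof is correct and matches the paper's approach exactly: the paper omits the proof, stating it is identical to Lemma \ref{mobius-c-recursion}, whose argument is precisely the Pascal's identity manipulation you carry out, with the weight $\frac{1}{k+1}b_{\G}(k)$ playing the same inert role as $2^{-k}a_{\mathcal{M}}(k)$. Your handling of the two vanishing boundary terms is also the same as in that proof.
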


\begin{lemma} \label{gromov-s-top-terms}
For all $d$, $$(d+1)\left(s_{\mathcal{G}}(0,d) + s_{\mathcal{G}}(d,d)\right) = d\left(s_{\mathcal{G}}(d-1,d-1)-s_{\mathcal{G}}(0,d-1)\right).$$
\end{lemma}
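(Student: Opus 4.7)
The plan is to imitate the proof of the cubical counterpart (Lemma \ref{gromov-c-top-terms}) by first deriving explicit closed-form expressions for the boundary coefficients $s_{\G}(0,d)$ and $s_{\G}(d,d)$, then feeding them into the recursion \eqref{gromov-bn-recursion} for the simplicial hyperbolization coefficients $b_{\G}(k)$. The main technical substitution that makes the whole machine turn is the elementary identity
\[
\frac{1}{k+1}\binom{d}{k} \;=\; \frac{1}{d+1}\binom{d+1}{k+1},
\]
which converts the $\frac{1}{k+1}$ factors in the definition of $s_{\G}(j,d)$ into binomial coefficients that precisely match the face numbers $f_k(\partial\sigma^d) = \binom{d+1}{k+1}$.

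First I would apply this identity to the definition of $s_{\G}(0,d)$ to obtain
\[
(d+1)\, s_{\G}(0,d) \;=\; \sum_{k=0}^{d-1} f_k(\partial\sigma^{d}) b_{\G}(k) + b_{\G}(d),
\]
together with the trivial $(d+1)s_{\G}(d,d) = b_{\G}(d)$. Adding these gives an expression for $(d+1)[s_{\G}(0,d)+s_{\G}(d,d)]$ as a sum $\sum_{k=0}^{d-1} f_k(\partial\sigma^d)b_{\G}(k) + 2b_{\G}(d)$. Next I would rewrite the recursion \eqref{gromov-bn-recursion} in the form
\[
-2b_{\G}(d) \;=\; \sum_{k=0}^{d-2} b_{\G}(k)f_k(\partial\sigma^{d-1}) + \sum_{k=0}^{d-1} b_{\G}(k)f_k(\partial\sigma^d)
\]
and substitute into the preceding expression so that the $\sum_{k=0}^{d-1}f_k(\partial\sigma^d)b_{\G}(k)$ piece and the $2b_{\G}(d)$ piece collapse, leaving
\[
(d+1)\bigl[s_{\G}(0,d)+s_{\G}(d,d)\bigr] \;=\; -\sum_{k=0}^{d-2} f_k(\partial\sigma^{d-1}) b_{\G}(k).
\]

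For the right-hand side, I would repeat the same closed-form derivation one dimension lower, using the identity $\frac{d}{k+1}\binom{d-1}{k} = \binom{d}{k+1}$ to get
\[
d\, s_{\G}(0,d-1) \;=\; \sum_{k=0}^{d-2} f_k(\partial\sigma^{d-1}) b_{\G}(k) + b_{\G}(d-1),
\]
together with $d \cdot s_{\G}(d-1,d-1) = b_{\G}(d-1)$. Subtracting yields exactly $d[s_{\G}(d-1,d-1) - s_{\G}(0,d-1)] = -\sum_{k=0}^{d-2}f_k(\partial\sigma^{d-1})b_{\G}(k)$, which matches the left-hand side and completes the proof.

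The only real obstacle is keeping the bookkeeping of the scalar factors straight: in the cubical setting the denominators $2^k$ combine cleanly into a single $2^d$, producing the coefficient $2$ in Lemma \ref{gromov-c-top-terms}, whereas here the denominators $k+1$ produce the asymmetric factors $d+1$ (from $s_{\G}(\cdot,d)$) and $d$ (from $s_{\G}(\cdot,d-1)$). Both sides must be multiplied by the correct weight before the $b_{\G}(d)$ and $b_{\G}(d-1)$ terms cancel, and the identification $\frac{1}{k+1}\binom{d}{k} = \frac{1}{d+1}\binom{d+1}{k+1}$ is what forces those weights to be exactly what appears in the statement of the lemma.
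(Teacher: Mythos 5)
Your proof is correct and follows essentially the same route as the paper, which simply declares the argument identical to the cubical Lemma \ref{gromov-c-top-terms}: express the boundary coefficients $s_{\G}(0,d)$ and $s_{\G}(d,d)$ in closed form, match them against the face numbers $f_k(\partial\sigma^d)=\binom{d+1}{k+1}$ via $\frac{1}{k+1}\binom{d}{k}=\frac{1}{d+1}\binom{d+1}{k+1}$, and collapse using the recursion \eqref{gromov-bn-recursion}. Your explicit tracking of the asymmetric weights $d+1$ and $d$ is exactly the bookkeeping the paper glosses over when it calls the proofs ``identical.''
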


\begin{lemma} \label{gromov-s-symmetry}
For all $j$ and all $d$ we have $$s_{\mathcal{G}}(j,d) = (-1)^ds_{\mathcal{G}}(d-j,d).$$
\end{lemma}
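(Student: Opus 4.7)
The plan is to mirror the cubical argument of Lemma \ref{gromov-c-symmetry} step by step, using Lemma \ref{gromov-s-recursion} (additive recursion), Lemma \ref{gromov-s-top-terms} (the boundary identity tying top-index values in dimensions $d$ and $d-1$), and the simplicial analogue of how the short cubical $h$-vector of $\partial I^d$ was used. I would proceed by induction on $d$, first establishing the extremal case $j=0$ (equivalently $j=d$) and then propagating skew-symmetry to all $j$ by reverse induction on $j$.

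For the extremal case, I would start from the telescoping consequence of Lemma \ref{gromov-s-recursion}:
\begin{equation*}
s_{\mathcal{G}}(0,d) - s_{\mathcal{G}}(d,d) = \sum_{j=0}^{d-1} s_{\mathcal{G}}(j,d-1).
\end{equation*}
When $d$ is even, $\mathcal{G}(\partial\sigma^d)$ is a closed odd-dimensional manifold, so $\chi(\mathcal{G}(\partial\sigma^d))=0$ by Poincar\'e duality. On the other hand, the vertex links of $\partial\sigma^d$ are all copies of $\partial\sigma^{d-1}$, which has simplicial $h$-vector $(1,1,\ldots,1)$, so $\widetilde{h}_j(\partial\sigma^d)=d+1$ for every admissible $j$. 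Substituting into $\chi(\mathcal{G}(\partial\sigma^d)) = \sum_{j} s_{\mathcal{G}}(j,d-1)\widetilde{h}_j(\partial\sigma^d)$ gives $\sum_j s_{\mathcal{G}}(j,d-1)=0$, hence $s_{\mathcal{G}}(0,d)=s_{\mathcal{G}}(d,d)$, which is the desired $(-1)^d=1$ identity. When $d$ is odd, I would instead invoke Lemma \ref{gromov-s-top-terms} together with the inductive hypothesis (applied in even dimension $d-1$, so $s_{\mathcal{G}}(d-1,d-1)=s_{\mathcal{G}}(0,d-1)$). The right-hand side of that lemma then collapses to zero, forcing $s_{\mathcal{G}}(0,d)=-s_{\mathcal{G}}(d,d)$, as required.

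With the $j=0$ case in hand, I would extend to all $j$ by the same reverse induction on $j$ that appears in the proof of Lemma \ref{mobius-c-symmetry}: assuming $s_{\mathcal{G}}(j+1,d)=(-1)^d s_{\mathcal{G}}(d-j-1,d)$ and (inductively on $d$) that $s_{\mathcal{G}}(j,d-1)=(-1)^{d-1} s_{\mathcal{G}}(d-1-j,d-1)$, one applies Lemma \ref{gromov-s-recursion} twice — once in the form $s_{\mathcal{G}}(j,d)=s_{\mathcal{G}}(j+1,d)+s_{\mathcal{G}}(j,d-1)$ and once to rewrite $s_{\mathcal{G}}(d-j-1,d)+s_{\mathcal{G}}(d-j-1,d-1)$ as $s_{\mathcal{G}}(d-j,d)$ — yielding $s_{\mathcal{G}}(j,d)=(-1)^d s_{\mathcal{G}}(d-j,d)$ after the inner terms telescope.

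The only step that requires any genuine care is the odd-$d$ base case, since it is the place where the simplicial and cubical proofs most visibly diverge: the relevant identity Lemma \ref{gromov-s-top-terms} carries nontrivial coefficients $d$ and $d+1$ (unlike the plain factor $2$ in the cubical Lemma \ref{gromov-c-top-terms}), and one needs to verify that these coefficients do not obstruct the cancellation. Once that verification is in place — and it reduces to a direct substitution using the inductive hypothesis for $d-1$ — the remainder of the argument is a mechanical transcription of the cubical proof with $c_{\mathcal{G}}$ replaced by $s_{\mathcal{G}}$ and the factor $2^d$ replaced by $d+1$ wherever a short $h$-vector of the simplex boundary appears.
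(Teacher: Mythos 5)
Your proposal is correct and is essentially the paper's own argument: the paper omits the proof of Lemma \ref{gromov-s-symmetry}, stating that it is identical to the cubical Lemma \ref{gromov-c-symmetry}, and your write-up is precisely that transcription, with the right simplicial substitutions ($\widetilde{h}_j(\partial\sigma^d)=d+1$ in place of $h_j\scc(\partial I^d)=2^d$, and the coefficients $d+1$, $d$ from Lemma \ref{gromov-s-top-terms} in place of the factor $2$). The telescoping step, the Poincar\'e duality argument for even $d$, the use of the top-terms identity for odd $d$, and the reverse induction on $j$ all match the paper's intended proof.
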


\begin{lemma} \label{gromov-s-monotony}
For all $m \geq 1$, we have 
\begin{eqnarray*}
\label{simp-gromov-0mod4} && 0 \leq s_{\mathcal{G}}(0,4m) \leq s_{\mathcal{G}}(1,4m) \leq \cdots \leq s_{\mathcal{G}}(2m,4m) \\ 
\label{simp-gromov-1mod4} && 0 \leq s_{\mathcal{G}}(2m,4m+1) \leq s_{\mathcal{G}}(2m-1,4m+1) \leq \cdots \leq s_{\mathcal{G}}(0,4m+1) \\ 
\label{simp-gromov-2mod4} &&  0 \geq s_{\mathcal{G}}(0,4m+2) \geq s_{\mathcal{G}}(1,4m+2) \geq \cdots \geq s_{\mathcal{G}}(2m+1,4m+2) \\ 
\label{simp-gromov-3mod4} && 0 \geq s_{\mathcal{G}}(2m+1,4m+3) \geq s_{\mathcal{G}}(2m,4m+3) \geq \cdots \geq s_{\mathcal{G}}(0,4m+3). 
\end{eqnarray*}
\end{lemma}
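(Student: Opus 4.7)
The plan is to imitate, essentially verbatim, the proof of Lemma~\ref{gromov-c-monotony}, substituting the simplicial recursion (Lemma~\ref{gromov-s-recursion}), the simplicial skew-symmetry (Lemma~\ref{gromov-s-symmetry}), and the simplicial boundary identity (Lemma~\ref{gromov-s-top-terms}) for their cubical counterparts. I would proceed by induction on $m$, organizing the argument as a four-step cycle: for each $m$, the inequalities at $d=4m$ imply those at $d=4m+1$, which imply those at $d=4m+2$, which imply those at $d=4m+3$, which then imply the inequalities at $d=4(m+1)$. The base case $m=1$ is handled by inspection of Table~\ref{gromov-s-j-d}.

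Most of the transitions carry over without change from the cubical proof. For the step from $d=4m$ to $d=4m+1$: Lemma~\ref{gromov-s-symmetry} gives $s_{\mathcal{G}}(2m,4m+1)=-s_{\mathcal{G}}(2m+1,4m+1)$, and combining this with Lemma~\ref{gromov-s-recursion} yields $s_{\mathcal{G}}(2m,4m+1)=\tfrac12 s_{\mathcal{G}}(2m,4m)\ge 0$. The remaining inequalities for $j<2m$ then follow inductively from $s_{\mathcal{G}}(j,4m+1)=s_{\mathcal{G}}(j+1,4m+1)+s_{\mathcal{G}}(j,4m)$ together with $s_{\mathcal{G}}(j,4m)\ge 0$. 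The analogous argument, with reversed signs, handles the transition from $d=4m+2$ to $d=4m+3$.

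The step requiring the new ingredient is establishing the sign of $s_{\mathcal{G}}(0,4m+2)$ and $s_{\mathcal{G}}(0,4m+4)$, which, unlike the M\"obius case, are not automatically zero. Taking $d=4m+2$ in Lemma~\ref{gromov-s-top-terms} and using Lemma~\ref{gromov-s-symmetry} to rewrite $s_{\mathcal{G}}(4m+2,4m+2)=s_{\mathcal{G}}(0,4m+2)$ and $s_{\mathcal{G}}(4m+1,4m+1)=-s_{\mathcal{G}}(0,4m+1)$ collapses the identity to
\[
(4m+3)\, s_{\mathcal{G}}(0,4m+2) \;=\; -(4m+2)\, s_{\mathcal{G}}(0,4m+1),
\]
which forces $s_{\mathcal{G}}(0,4m+2)\le 0$ by the inductive hypothesis at $d=4m+1$. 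The parallel computation at $d=4m+4$, now using the just-established sign $s_{\mathcal{G}}(0,4m+3)\le 0$, gives $s_{\mathcal{G}}(0,4m+4)\ge 0$. With these endpoint signs secured, the recursion in Lemma~\ref{gromov-s-recursion} propagates the inequalities along each row exactly as in the cubical proof.

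The main thing to be careful about is purely organizational: tracking signs correctly across the four congruence classes mod $4$ and matching the parity-dependent endpoint identities from Lemma~\ref{gromov-s-top-terms} against the signs dictated by Lemma~\ref{gromov-s-symmetry}. Once the endpoint sign is pinned down in each of the two even-dimensional cases, the rest of the argument is a mechanical application of the recursion, exactly as for $c_{\mathcal{G}}$.
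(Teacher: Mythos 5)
Your proposal is correct and follows exactly the route the paper intends: the paper omits this proof, stating it is identical to that of Lemma~\ref{gromov-c-monotony}, and your argument is precisely that proof with Lemmas~\ref{gromov-s-recursion}, \ref{gromov-s-symmetry}, and \ref{gromov-s-top-terms} substituted for their cubical counterparts. Your explicit verification of the endpoint sign via $(4m+3)\,s_{\mathcal{G}}(0,4m+2) = -(4m+2)\,s_{\mathcal{G}}(0,4m+1)$ is the correct simplicial analogue of the identity $2c_{\mathcal{G}}(0,4m+2)=-c_{\mathcal{G}}(0,4m+1)$ used in the cubical case.
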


As before, Lemmas \ref{gromov-s-symmetry} and \ref{gromov-s-monotony}, together with the fact that the short simplicial $h$-numbers of a closed manifold are nonnegative, prove the Sign Conjecture for Gromov hyperbolizations.

\begin{theorem} \label{thm:simplicial-gromov}
Let $\mathcal{K}$ be a closed simplicial $2m$-manifold.  Then $$(-1)^m\chi(\G_a(\mathcal{K})) \geq 0.$$
\end{theorem}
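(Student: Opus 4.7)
The plan is to imitate verbatim the structure used in the three preceding hyperbolization theorems, and in particular the cubical Gromov case (Theorem \ref{thm:cubical-gromov}). The single identity that drives everything is
\[
\chi(\mathcal{G}_a(\mathcal{K})) \;=\; \sum_{j=0}^{2m} s_{\mathcal{G}}(j,2m)\,\widetilde{h}_j(\mathcal{K}),
\]
which was derived immediately above the statement of the theorem from the definition of the hyperbolization coefficients $b_{\mathcal{G}}(k)$ together with the short simplicial $h$-vector identity in Equation \eqref{simplicial:f-to-short-h}. So the whole task reduces to controlling the signs of the two families of numbers on the right.

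First I would recall that $\widetilde{h}_j(\mathcal{K}) \geq 0$. Since $\mathcal{K}$ is a simplicial homology manifold, each vertex link is a homology sphere and hence Cohen--Macaulay by Reisner's criterion (Theorem \ref{Reisners-criterion}); Theorem \ref{h-nums-nonnegative} then yields $h_j(\lk_{\mathcal{K}}(v)) \geq 0$ for every vertex $v$ and every $j$, and summing over $v$ gives the nonnegativity of $\widetilde{h}_j(\mathcal{K})$. Next I would invoke Lemma \ref{gromov-s-monotony} together with the skew-symmetry Lemma \ref{gromov-s-symmetry} to conclude that $s_{\mathcal{G}}(j,2m) \geq 0$ for all $j$ when $2m \equiv 0 \pmod 4$ and $s_{\mathcal{G}}(j,2m) \leq 0$ for all $j$ when $2m \equiv 2 \pmod 4$. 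In both cases the common sign is precisely $(-1)^m$, so $(-1)^m s_{\mathcal{G}}(j,2m) \widetilde{h}_j(\mathcal{K}) \geq 0$ termwise, and summing over $j$ delivers the theorem.

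The main obstacle has essentially been absorbed into the preceding lemmas, so the theorem itself is an assembly. The delicate point — and the only place where this proof is not formally identical to its M\"obius counterpart (Theorem \ref{thm:simplicial-mobius}) — is the control of the endpoint coefficient $s_{\mathcal{G}}(0,2m)$, which in the Gromov setting is no longer forced to vanish by Poincar\'e duality. That control was secured inside Lemma \ref{gromov-s-monotony} via Lemma \ref{gromov-s-top-terms}, the simplicial analogue of the key lemma \ref{gromov-c-top-terms}; one only needs to cite it, not re-prove it.
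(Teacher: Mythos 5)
Your proposal is correct and follows exactly the paper's own argument: expand $\chi(\G_a(\mathcal{K}))$ in the short simplicial $h$-numbers, use Lemmas \ref{gromov-s-symmetry} and \ref{gromov-s-monotony} to show every coefficient $s_{\mathcal{G}}(j,2m)$ has sign $(-1)^m$, and combine with the nonnegativity of $\widetilde{h}_j(\mathcal{K})$ coming from the Cohen--Macaulayness of vertex links. Your remark that the only genuinely new issue relative to the M\"obius case --- controlling $s_{\mathcal{G}}(0,2m)$, which no longer vanishes --- is already absorbed into Lemma \ref{gromov-s-monotony} via Lemma \ref{gromov-s-top-terms} matches the paper precisely.
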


\section{Combinatorial properties of the hyperbolization coefficients} \label{sec:gen-funs}

For certain hyperbolization functors, the hyperbolization coefficients have interesting combinatorial interpretations when we specialize to the case that $a=0$ or $a=1$.

\subsection{The cubical M\"obius band hyperbolization}

When $a=0$, the coefficients $a_{\mathcal{M}}(n)$ are $$1,-1,0,2,0,-16,0,272,0,\ldots.$$  The sequence $1,2,16,272,\ldots,$  of ``tangent numbers" \cite[A000182]{OEIS} counts the number of permutations $\pi$ on $\{1,2,\ldots,2n-1\}$ such that $\pi_1<\pi_2>\pi_3<\cdots$.  The exponential generating function for this sequence is the Taylor series expansion of $\tan(x)$.  

Next, consider the Taylor series expansion $$\tan(x) = \sum_{n \geq 1}t_n\frac{x^{2n-1}}{(2n-1)!}.$$

Since $1-\tanh(x) = 1 + \mathbf{i}\tan(\mathbf{i} x)$\footnote{We are grateful to Lara Pudwell for pointing out this identity, which simplified the resulting computation.},
\begin{eqnarray*}
1-\tanh(x) &=& 1 + \mathbf{i}\tan(\mathbf{i}x) \\
&=& 1 + \mathbf{i}\sum_{n \geq 1}t_n\mathbf{i}^{2n-1}\frac{x^{2n-1}}{(2n-1)!} \\
&=& 1 + \sum_{n \geq 1}t_n\mathbf{i}^{2n}\frac{x^{2n-1}}{(2n-1)!} \\
&=& 1 + \sum_{n \geq 1}(-1)^nt_n\frac{x^{2n-1}}{(2n-1)!}.
\end{eqnarray*}

Let $F(x) = 1-\tanh(x)$.  We will show that $a_{\mathcal{M}}(n) = F^{(n)}(0)$, and hence $a_{\mathcal{M}}(2n-1) = (-1)^nt_n$ for all $n \geq 1$.  Thus the hyperbolic coefficients of odd index, $a_{\mathcal{M}}(2n-1)$, are signed versions of these well-studied combinatorial statistics. 

\begin{theorem}
Let $F(x) = \frac{2}{1+e^{2x}} = 1-\tanh(x)$. Then $a_{\mathcal{M}}(n) = F^{(n)}(0).$
\end{theorem}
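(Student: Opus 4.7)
The plan is to convert the recursion \eqref{mobius-an-recursion} into a functional equation for the exponential generating function $A(x) := \sum_{n\ge 0} a_{\mathcal{M}}(n)\,\frac{x^n}{n!}$ and then identify $A$ with $F$. Since the claim is made when $a=0$, I first want to check that under this specialization the recursion \eqref{mobius-an-recursion} in fact holds for all $n\ge 1$, not merely $n\ge 3$. The cases $n=1$ and $n=2$ are quick direct verifications using $a_{\mathcal{M}}(0)=1$, $a_{\mathcal{M}}(1)=-1$, $a_{\mathcal{M}}(2)=0$.

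Next, I would rewrite \eqref{mobius-an-recursion} in the cleaner form
\begin{equation*}
\sum_{k=0}^{n}\binom{n}{k}2^{n-k}a_{\mathcal{M}}(k) \;=\; -a_{\mathcal{M}}(n),\qquad n\ge 1,
\end{equation*}
which is obtained by adding the $k=n$ term $a_{\mathcal{M}}(n)$ to both sides of \eqref{mobius-an-recursion} and using that $2a_{\mathcal{M}}(n)+\sum_{k=0}^{n-1}\binom{n}{k}2^{n-k}a_{\mathcal{M}}(k)=0$. The left-hand side is the binomial convolution of the sequences $(2^n)$ and $(a_{\mathcal{M}}(n))$, so it is the coefficient sequence of the EGF product $e^{2x}A(x)$. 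Reading off coefficients and taking the $n=0$ term into account (where the left-hand side equals $1$ while $-a_{\mathcal{M}}(0)=-1$), this translates into the functional equation
\begin{equation*}
e^{2x}A(x) \;=\; 2 - A(x).
\end{equation*}

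Solving gives $A(x) = \dfrac{2}{1+e^{2x}} = F(x)$, and so $a_{\mathcal{M}}(n)=n!\,[x^n]A(x)=F^{(n)}(0)$, proving the theorem. The only step that requires care is bookkeeping at $n=0$ when converting the recursion into the functional equation, which is a routine matter of adding a constant to match boundary values; the rest of the argument is formal manipulation of exponential generating functions.
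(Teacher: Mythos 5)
Your proof is correct, and it is essentially the paper's argument run in the opposite direction: the paper differentiates the functional equation $(1+e^{2x})F(x)=2$ via Leibniz to show $F^{(n)}(0)$ satisfies the recursion $-F^{(n)}(0)=\sum_{k=0}^{n}\binom{n}{k}2^{n-k}F^{(k)}(0)$ defining $a_{\mathcal{M}}(n)$, while you package that same recursion as the binomial convolution $e^{2x}A(x)=2-A(x)$ and solve for the EGF. The algebraic content is identical, and your care with the $n=0$ term and the $n=1,2$ base cases is exactly the bookkeeping the argument needs.
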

\begin{proof}
By rewriting $F(x)$ as $(1+e^{2x})F(x) = 2$, a simple inductive argument shows that $$-F^{(n)}(x) = \sum_{k=0}^n2^{n-k}{n \choose k}e^{2x}F^{(k)}(x),$$

so 
\begin{equation}\label{gf-recursion}
-F^{(n)}(0) = \sum_{k=0}^n2^{n-k}{n \choose k}F^{(k)}(x).
\end{equation}

Since $F(0) = 1$, Equation \eqref{gf-recursion} shows that $F^{(n)}(0) = a_{\mathcal{M}}(n)$.  

\end{proof}

\subsection{The simplicial M\"obius band hyperbolization}

When $a=0$, the coefficients $b_{\mathcal{M}}(n)$ are $$1,-1,0,1,0,-3,0,17,0,-155,\ldots.$$  This is the sequence $1,1,3,17,155,\ldots$ of Genocchi numbers \cite[A110501]{OEIS}, which appear as the unsigned nonzero coefficients of the Taylor series expansion of the function $$G(x) = -x\tanh(\frac{x}{2}) = -x\frac{e^x-1}{e^x+1} = -\frac{x^2}{2!} + \frac{x^4}{4!} - \frac{3x^6}{6!} + \frac{17x^8}{8!} + \ldots.$$

\begin{theorem}
Let $G(x) = -x\frac{e^x-1}{e^x+1}$.  Then $b_{\mathcal{M}}(2n-1) = G^{(2n)}(0)$ for all $n \geq 1$.  
\end{theorem}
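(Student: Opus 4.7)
The strategy is to extract the exponential generating function of the sequence $\{b_{\mathcal{M}}(n)\}$ (with $a=0$) in closed form and identify it with the Taylor series of $G$.

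First I would observe that in the case $a=0$, the recursion
\[b_{\mathcal{M}}(n) = -\frac{1}{2}\sum_{k=0}^{n-1} b_{\mathcal{M}}(k)\binom{n+1}{k+1}\]
(stated in the excerpt only for $n \geq 3$) in fact holds for every $n \geq 1$: a direct check gives the right-hand side equal to $-1$ when $n=1$ and $0$ when $n=2$, matching $b_{\mathcal{M}}(1) = -1$ and $b_{\mathcal{M}}(2) = a = 0$.

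Because the binomial coefficient $\binom{n+1}{k+1}$ shifts indices by one, the natural generating function is not the ordinary EGF but the ``shifted'' version
\[C(x) := \sum_{n \geq 0} \frac{b_{\mathcal{M}}(n)}{(n+1)!}\, x^n.\]
Dividing the recursion by $(n+1)!$ turns the right-hand side into $-\tfrac{1}{2}[x^n]\,C(x)(e^x-1)$ for every $n \geq 1$, so together with $c_0 = b_{\mathcal{M}}(0) = 1$ the series $C(x)$ satisfies the functional equation $C(x) - 1 = -\tfrac{1}{2}\, C(x)(e^x - 1)$, whose unique solution is
\[C(x) = \frac{2}{e^x+1}.\]

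Finally, using the algebraic identity $\tfrac{2x}{e^x+1} = x - x\cdot\tfrac{e^x-1}{e^x+1} = x + G(x)$, one gets
\[\sum_{n \geq 0}\frac{b_{\mathcal{M}}(n)}{(n+1)!}\,x^{n+1} = x\,C(x) = x + G(x).\]
Comparing coefficients of $x^{n+1}$ for $n \geq 1$ yields $b_{\mathcal{M}}(n) = G^{(n+1)}(0)$, and in particular $b_{\mathcal{M}}(2n-1) = G^{(2n)}(0)$, which is the claim. (As a sanity check, $G$ is an even function, so this also recovers $b_{\mathcal{M}}(2k) = 0$ for $k \geq 1$.)

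The only nontrivial move is the first one: recognizing that the correct normalization is $b_{\mathcal{M}}(n)/(n+1)!$ rather than the standard $b_{\mathcal{M}}(n)/n!$, so that the $\binom{n+1}{k+1}$-weighted convolution becomes an ordinary Cauchy product with $e^x-1$. Once the shifted EGF is introduced, everything else is formal power-series manipulation.
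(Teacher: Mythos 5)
Your proof is correct, and it takes a genuinely different route from the paper's. The paper works at the level of the recursion itself: it differentiates the functional equation $(e^x+1)G(x)=-x(e^x-1)$ a total of $2n$ times, evaluates at $0$, and checks that the numbers $G^{(2n)}(0)$ satisfy exactly the same recursion (with the same initial data) as the $b_{\mathcal{M}}(2n-1)$, using the vanishing of $b_{\mathcal{M}}(2k)$ and of the odd-order derivatives of $G$ at $0$ to discard half the terms; the conclusion then follows by induction. You instead package the whole sequence into the shifted exponential generating function $C(x)=\sum_{n\ge 0}b_{\mathcal{M}}(n)x^n/(n+1)!$, observe that the $\binom{n+1}{k+1}$-convolution is the Cauchy product with $e^x-1$ under this normalization, solve the resulting functional equation to get $C(x)=2/(e^x+1)$ in closed form, and read off $xC(x)=x+G(x)$. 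Your verification that the recursion extends to $n=1,2$ when $a=0$ is the one point that needs (and gets) explicit checking. What your approach buys: it proves the stronger statement $b_{\mathcal{M}}(n)=G^{(n+1)}(0)$ for all $n\ge 1$ in one stroke, so the vanishing $b_{\mathcal{M}}(2k)=0$ comes out as a corollary of $G$ being even rather than being an input (the paper imports it from Poincar\'e duality), and it exhibits the closed-form generating function $2/(e^x+1)$ explicitly. What the paper's approach buys is that it avoids introducing the nonstandard normalization and stays entirely within finite identities, at the cost of a slightly longer bookkeeping argument. (Minor remark: the paper's proof asserts that $G$ is odd; it is in fact even, as you say, which is what actually forces $G^{(2k+1)}(0)=0$.)
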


\begin{proof}
We can rewrite the defining equation for $G(x)$ as $(e^x+1)G(x) = -x(e^x-1)$.  Implicitly differentiating this equation shows that for any $m \geq 2$, 
\begin{equation} \label{gromov-G-implicit-diff}
(e^x+1)G^{(m)}(x) + \sum_{k=0}^{m-1}{m \choose k}G^{(k)}(x)e^x = -(x+m)e^x.
\end{equation}
Further, $G(0) = 0$ and $G(x)$ is an odd function, meaning $G^{(2k+1)}(0) = 0$ for all $k$.  Thus evaluating Equation \eqref{gromov-G-implicit-diff} with $m = 2n$ and $x=0$ gives
\begin{eqnarray*}
-2n &=& 2G^{(2n)}(0) + \sum_{k=0}^{2n-1}{2n \choose k}G^{(k)}(0) \\
&=& 2G^{(2n)}(0) + \sum_{k=1}^{n-1}{2n \choose 2k}G^{(2k)}(0).
\end{eqnarray*}
Similarly, since $b_{\mathcal{M}}(0) = 1$ and $b_{\mathcal{M}}(2k) =0$ for all $k$, Equation \eqref{mobius-bn-recursion} gives
\begin{eqnarray*}
-2b_{\mathcal{M}}(2n-1) &=& \sum_{k=0}^{2n-2}{2n \choose k+1}b_{\mathcal{M}}(k) \\
&=& 2n + \sum_{k=1}^{n-1}{2n \choose 2k}b_{\mathcal{M}}(2k-1).
\end{eqnarray*}
Thus $b_{\mathcal{M}}(2n-1) = G^{(2n)}(0)$ for all $n \geq 1$.
\end{proof}

\subsection{The cubical Gromov hyperbolization}

When $a=-1$, the coefficients $a_{\mathcal{G}}(n)$ are $$1, -1,-1, 5, 5, -61, -61, 1385, 1385, -50521, -50521,\dots.$$
The corresponding sequence $1,1,5,61,1385, 50521,\dots $ \cite[A000364]{OEIS} is the sequence of Euler numbers or ``secant numbers," which count the number of permutations $\pi$ on $\{1,2,\ldots, 2n\}$ such that $\pi_1 < \pi_2 > \pi_3 < \cdots$.  The exponential generating for this sequence is the Taylor series expansion of the function $y = \sec(x)$.  A signed version of these numbers also occurs in \cite[A28296]{OEIS}.  As in the case of the cubical M\"obius band hyperbolization, we use the identity $\sec(\mathbf{i}x) = \sech(x) =  \frac{2e^x}{e^{2x}+1}$ whose Taylor series expansion is $$\frac{2e^x}{e^{2x}+1} = 1 - \frac{x^2}{2!} + \frac{5x^4}{4!} - \frac{61x^6}{6!} + \cdots.$$

\begin{theorem} \label{gromov-a-are-euler}
Let $$\sech(x) = \sum_{n \geq 0} s_n \frac{x^n}{n!}.$$ Then $s_{2n+1} = 0$ and $s_{2n} = a_{\mathcal{G}}(2n)$ for all $n \geq 0$. 
\end{theorem}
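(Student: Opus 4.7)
Proof proposal. The plan is to compute the exponential generating function
$$A(x) := \sum_{n \geq 0} a_{\G}(n) \frac{x^n}{n!}$$
(with $a = -1$) in closed form and read off the claim from its even part.

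First, I would simplify the recursion \eqref{gromov-an-recursion}. Using $f_k(\partial I^m) = \binom{m}{k} 2^{m-k}$ for $k < m$, introduce $P(x) := A(x) e^{2x}$ so that
$$p_n := [x^n/n!]\, P(x) = \sum_{k=0}^n \binom{n}{k} 2^{n-k} a_{\G}(k).$$
Then the two inner sums in \eqref{gromov-an-recursion} are precisely $p_{n-1} - a_{\G}(n-1)$ and $p_n - a_{\G}(n)$, and \eqref{gromov-an-recursion} collapses to
$$a_{\G}(n) - a_{\G}(n-1) = -p_{n-1} - p_n, \qquad n \geq 1$$
(the cases $n=1,2$ are verified directly). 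Summing this identity against $x^n/n!$ and using $\int_0^x A(t)\,dt = \sum_{n \geq 1} a_{\G}(n-1)\, x^n/n!$ (likewise for $P$), then differentiating, gives
$$A'(x) + P'(x) = A(x) - P(x).$$
Substituting $P = A\, e^{2x}$ and simplifying yields the separable ODE
$$A'(x)(1 + e^{2x}) = A(x)(1 - 3 e^{2x}).$$

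Second, I would solve the ODE. Writing
$$\frac{1 - 3 e^{2x}}{1 + e^{2x}} = 1 - \frac{4 e^{2x}}{1 + e^{2x}} = 1 - 2\frac{d}{dx}\ln(1 + e^{2x}),$$
integrating, and imposing $A(0) = 1$ gives
$$A(x) = \frac{4 e^x}{(1 + e^{2x})^2} = e^{-x}\sech^2(x),$$
using $\sech^2(x) = 4 e^{2x}/(1 + e^{2x})^2$.

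Finally, extract even-index coefficients. Since $\sech$ is even, $s_{2n+1} = 0$ is immediate. For the even part of $A$, compute
$$\frac{A(x) + A(-x)}{2} = \frac{(e^{-x} + e^x)\sech^2(x)}{2} = \cosh(x)\sech^2(x) = \sech(x),$$
so $A(x)$ and $\sech(x)$ share all of their even-index Taylor coefficients, giving $a_{\G}(2n) = s_{2n}$ for every $n \geq 0$.

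The main obstacle is the first step: the recursion mixes boundary data from $\partial I^{n-1}$ and $\partial I^n$, so the key trick is to package both through the single auxiliary function $P = A\, e^{2x}$, after which the mixed recursion collapses into a first-order linear ODE rather than a second-order relation. Once the ODE is in hand, its solution and the final even/odd decomposition are routine.
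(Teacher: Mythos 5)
Your proof is correct, but it takes a genuinely different route from the paper's. The paper never forms a generating function for the $a_{\G}(n)$ themselves: it first massages the recursion \eqref{gromov-an-recursion} through three lemmas (that $a_{\G}(2n)=a_{\G}(2n-1)$, that $\sum_{k=0}^{2n-1}a_{\G}(k)\binom{2n}{k}2^{2n-k}=0$, and a resulting ``clean'' recursion \eqref{gromov-a-good-recursion} involving only even-index terms), and then verifies by repeated implicit differentiation of $2e^x=(e^{2x}+1)\sech(x)$ that the even Taylor coefficients of $\sech$ obey the same recursion. You instead collapse the mixed recursion directly into the first-order ODE $A'(1+e^{2x})=A(1-3e^{2x})$ for the full exponential generating function $A(x)=\sum a_{\G}(n)x^n/n!$ via the auxiliary series $P=Ae^{2x}$, solve it in closed form, and take the even part. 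I checked the details: the collapsed identity $a_{\G}(n)-a_{\G}(n-1)=-p_{n-1}-p_n$ does hold for $n=1,2$ with $a=-1$ (so the ODE is a valid identity of formal power series), the integration gives $A(x)=4e^x/(1+e^{2x})^2=e^{-x}\sech^2(x)$, and the even part is $\cosh(x)\sech^2(x)=\sech(x)$ as you claim. Your approach buys more than the paper's: the closed form is equivalent to the pleasant identity $A(x)=\sech(x)+\sech'(x)$, i.e.\ $a_{\G}(n)=s_n+s_{n+1}$, whose odd part immediately recovers the paper's Lemma \ref{gromov-a-even=odd} ($a_{\G}(2n)=a_{\G}(2n-1)$) as a byproduct rather than as a needed input. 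The paper's argument is more elementary in that it avoids solving a differential equation, but yours is shorter and yields a stronger statement.
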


In order to prove this theorem, we will manipulate the recursion formula defining the sequence $a_{\mathcal{G}}(n)$ and show that it coincides with a recursion defining the sequence $s_n$. First we require three lemmas.

\begin{lemma} \label{gromov-a-even=odd}
For all $n \geq 1$, $$a_{\mathcal{G}}(2n) = a_{\mathcal{G}}(2n-1).$$
\end{lemma}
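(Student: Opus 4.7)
The plan is to reuse the identity derived in the passage immediately preceding the lemma, namely
$$a_{\mathcal{G}}(n) = -\chi(A^{n-1}) = -\tfrac{1}{2}\bigl(\chi(\mathcal{G}_a(\partial I^{n-1})) + \chi(\mathcal{G}_a(\partial I^{n}))\bigr),$$
valid for $n \geq 3$. Abbreviating $\beta_k := \chi(\mathcal{G}_a(\partial I^{k}))$, this reads $a_{\mathcal{G}}(n) = -\tfrac12(\beta_{n-1} + \beta_n)$. So the desired identity $a_{\mathcal{G}}(2m) = a_{\mathcal{G}}(2m-1)$ collapses to the single arithmetic claim $\beta_{2m} = \beta_{2m-2}$, which I will establish by showing that both sides vanish.

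The key geometric observation is that $\partial I^{k}$ is a closed $(k-1)$-sphere, and the Gromov hyperbolization takes closed manifolds to closed manifolds of the same dimension (an inductive consequence of Proposition \ref{prop:gromov} together with the manifold structure of the 2-dimensional hyperbolized cell). Hence $\mathcal{G}_a(\partial I^{2m})$ is a closed manifold of odd dimension $2m-1$, and so $\beta_{2m} = 0$ by the Poincar\'e duality fact recorded in the preliminaries on Euler characteristic. Substituting this into the pairing formula gives
$$a_{\mathcal{G}}(2m) = -\tfrac12(\beta_{2m-1} + 0) = -\tfrac12\beta_{2m-1}, \qquad a_{\mathcal{G}}(2m-1) = -\tfrac12(0 + \beta_{2m-1}) = -\tfrac12\beta_{2m-1},$$
so the two are equal for all $m \geq 2$. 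The one case not covered by the recursion is $n=1$, which asserts $a_{\mathcal{G}}(2) = a_{\mathcal{G}}(1)$, i.e.\ $a = -1$; this is the specialization in effect throughout this subsection (where the secant generating function is being analyzed), and so it holds by the chosen initial value.

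I do not anticipate a real obstacle. The pairing formula is already in hand from the text, and Poincar\'e duality for closed odd-dimensional manifolds is explicitly recalled earlier in the paper. The only subtle point worth flagging is the inductive justification that each $\mathcal{G}_a(\partial I^{k})$ really is a closed manifold, rather than just a cell complex — but this follows immediately from the way the hyperbolization is built by applying the doubling construction $\Delta$ of Proposition \ref{prop:gromov} cell by cell.
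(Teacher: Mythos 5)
Your proof is correct, and it takes a genuinely different route from the paper's. The paper deduces the identity algebraically from the transformed coefficients $c_{\mathcal{G}}(j,d)$: it specializes the ``top terms'' identity of Lemma \ref{gromov-c-top-terms} at $d=2n$, applies the skew-symmetry $c_{\mathcal{G}}(j,d)=(-1)^dc_{\mathcal{G}}(d-j,d)$ of Lemma \ref{gromov-c-symmetry} to both sides, and uses $c_{\mathcal{G}}(d,d)=2^{-d}a_{\mathcal{G}}(d)$ to read off $a_{\mathcal{G}}(2n)=a_{\mathcal{G}}(2n-1)$. You instead return to the topological source of the recursion, $a_{\mathcal{G}}(n)=-\tfrac12\bigl(\chi(\mathcal{G}_a(\partial I^{n-1}))+\chi(\mathcal{G}_a(\partial I^{n}))\bigr)$, and observe that the even-indexed terms $\chi(\mathcal{G}_a(\partial I^{2m}))$ vanish because they are Euler characteristics of closed odd-dimensional manifolds; hence $a_{\mathcal{G}}(2m)$ and $a_{\mathcal{G}}(2m-1)$ share the common value $-\tfrac12\chi(\mathcal{G}_a(\partial I^{2m-1}))$. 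Your argument is more elementary, bypassing the $c_{\mathcal{G}}(j,d)$ machinery entirely, and more explanatory, since it exhibits what the paired coefficients both equal; a quick check against the table of values ($a_{\mathcal{G}}(3)=a_{\mathcal{G}}(4)=2-3a$, etc.) confirms it. Your treatment of the edge case is also more careful than the paper's: $a_{\mathcal{G}}(2)=a_{\mathcal{G}}(1)$ genuinely forces $a=-1$, which is the standing specialization in this subsection, whereas the paper's proof silently invokes Lemma \ref{gromov-c-top-terms} at $d=2$, an instance that itself holds only when $a=-1$. The one hypothesis you rightly flag --- that each $\mathcal{G}_a(\partial I^{k})$ is a closed manifold of dimension $k-1$ --- is used by the paper without comment in the same way (for example in the proof of Lemma \ref{gromov-c-symmetry}, where $\chi(\mathcal{G}(\partial I^{d}))=0$ for $d$ even is asserted via Poincar\'e duality), so your proof rests on nothing the paper does not already assume.
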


\begin{proof}
By Lemma \ref{gromov-c-top-terms}, 
\begin{equation} \label{even=odd-eqn}
2[c_{\mathcal{G}}(0,2n) + c_{\mathcal{G}}(2n,2n)] = c_{\mathcal{G}}(2n-1,2n-1) - c_{\mathcal{G}}(0,2n-1).
\end{equation}
Moreover, by Proposition \ref{gromov-c-symmetry}, $c_{\mathcal{G}}(0,2n) = c_{\mathcal{G}}(2n,2n)$ and $c_{\mathcal{G}}(2n-1,2n-1) = -c_{\mathcal{G}}(0,2n-1)$.  Finally, $c_{\mathcal{G}}(d,d) = \frac{1}{2^d}a_{\mathcal{G}}(d)$ for any $d$ by the equation defining the coefficients $c_{\mathcal{G}}(j,d)$.  Thus the left side of Equation \eqref{even=odd-eqn} simplifies to $2\cdot \frac{1}{2^{2n}}a_{\mathcal{G}}(2n),$ and the right side simplifies to $\frac{1}{2^{2n-1}}a_{\mathcal{G}}(2n-1)$.  
\end{proof}

\begin{lemma} \label{gromov-a-simplification}
For all $n \geq 1$, $$\sum_{k=0}^{2n-1}a_{\mathcal{G}}(k){2n \choose k}2^{2n-k} = 0.$$
\end{lemma}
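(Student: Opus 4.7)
The plan is to prove the identity by induction on $n$, using the recursion \eqref{gromov-an-recursion} applied at two consecutive indices together with Lemma \ref{gromov-a-even=odd}. The base case $n = 1$ is a direct calculation from $a_{\mathcal{G}}(0) = 1$ and $a_{\mathcal{G}}(1) = -1$, giving $1\cdot 1\cdot 4 + (-1)\cdot 2\cdot 2 = 0$.

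For the inductive step at $n \geq 2$, I would apply the recursion \eqref{gromov-an-recursion} at the two indices $2n$ and $2n - 1$ (both are at least $3$, so the recursion is valid). Each application expresses $-2a_{\mathcal{G}}(\cdot)$ as the sum of two pieces: an ``inner'' sum weighted by the $f$-numbers of the boundary of the one-smaller cube, and an ``outer'' sum weighted by the $f$-numbers of the boundary of the cube itself. The key structural observation is that the outer sum at index $2n - 1$ and the inner sum at index $2n$ are literally the same expression, namely $\sum_{k=0}^{2n-2}a_{\mathcal{G}}(k)\binom{2n-1}{k}2^{2n-1-k}$. Moreover, the outer sum at index $2n$ is exactly the target expression $\sum_{k=0}^{2n-1}a_{\mathcal{G}}(k)\binom{2n}{k}2^{2n-k}$, while the inner sum at index $2n - 1$ is precisely the statement of the lemma at $n - 1$, which vanishes by the inductive hypothesis.

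It then remains to invoke Lemma \ref{gromov-a-even=odd}, which gives $a_{\mathcal{G}}(2n) = a_{\mathcal{G}}(2n - 1)$ and hence equates the two left-hand sides $-2a_{\mathcal{G}}(2n)$ and $-2a_{\mathcal{G}}(2n - 1)$. Subtracting the two recursion equations cancels the common middle sum and yields exactly the target identity. The only item needing care is the bookkeeping: confirming that both recursions apply (the index bounds $2n - 1, 2n \geq 3$ hold for $n \geq 2$) and that Lemma \ref{gromov-a-even=odd} is invoked only for $n \geq 2$, with the boundary case $n = 1$ handled separately in the base case. I do not anticipate any deeper obstacle; the proof is essentially a clean telescoping between two adjacent instances of the recursion, mediated by the even-equals-odd symmetry of the hyperbolization coefficients.
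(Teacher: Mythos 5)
Your proposal is correct and follows essentially the same route as the paper: induction on $n$, writing the recursion \eqref{gromov-an-recursion} at the two consecutive indices $2n$ and $2n-1$, equating the left-hand sides via Lemma \ref{gromov-a-even=odd}, cancelling the common sum $\sum_{k=0}^{2n-2}a_{\mathcal{G}}(k)\binom{2n-1}{k}2^{2n-1-k}$, and killing the remaining sum by the inductive hypothesis. The bookkeeping you flag (index bounds and the separate base case) is exactly what the paper's argument relies on, so there is no gap.
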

\begin{proof}
We prove the claim by induction on $n$.  A simple calculation verifies the result holds when $n=1$, so suppose $n > 1$ and the result holds inductively.  We use the recursion \eqref{gromov-an-recursion} to write
\begin{eqnarray*}
-2a_{\mathcal{G}}(2n) &=& \sum_{k=0}^{2n-2} a_{\mathcal{G}}(k)2^{2n-k-1}{2n-1 \choose k} + \sum_{k=0}^{2n-1}a_{\mathcal{G}}(k)2^{2n-k}{2n \choose k} \text{ and } \\
-2a_{\mathcal{G}}(2n-1) &=& \sum_{k=0}^{2n-3}a_{\mathcal{G}}(k)2^{2n-k-2}{2n-2 \choose k} + \sum_{k=0}^{2n-2}a_{\mathcal{G}}(k)2^{2n-k-1}{2n-1 \choose k}.
\end{eqnarray*}
These two quantities are equal by Lemma \ref{gromov-a-even=odd}.  The first summation in the top line is equal to the second summation in the bottom line.  The first summation in the bottom line is equal to $0$ by our inductive hypothesis.  Thus the second summation in the top line must be equal to $0$ as well.
\end{proof}

\begin{lemma} 
For all $n \geq 1$, 
\begin{equation} \label{gromov-a-good-recursion}
-2a_{\mathcal{G}}(2n) = \sum_{k=0}^{n-1}a_{\mathcal{G}}(2k)2^{2n-2k-1}\left[2{2n \choose 2k}-{2n-1\choose 2k}\right].
\end{equation}
\end{lemma}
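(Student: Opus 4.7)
The plan is to derive the identity by manipulating the recursion \eqref{gromov-an-recursion} so that only even-indexed coefficients appear on the right-hand side. Specializing \eqref{gromov-an-recursion} to the index $2n$ gives
$$-2a_{\G}(2n) = \sum_{k=0}^{2n-2} a_{\G}(k)\binom{2n-1}{k}2^{2n-1-k} + \sum_{k=0}^{2n-1} a_{\G}(k)\binom{2n}{k}2^{2n-k}.$$
First I would invoke Lemma~\ref{gromov-a-simplification} to note that the second sum is identically zero, leaving only the first sum to analyze.

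Next I would split that remaining sum according to the parity of the index $k$. For each odd index $k = 2\ell - 1$ with $1 \le \ell \le n-1$, the key move is to apply Lemma~\ref{gromov-a-even=odd} to replace $a_{\G}(2\ell - 1)$ by $a_{\G}(2\ell)$, so that the odd-indexed contribution reindexes to $\sum_{k=1}^{n-1}a_{\G}(2k)\binom{2n-1}{2k-1}2^{2n-2k}$. Combining this with the even-indexed contribution (using the convention $\binom{2n-1}{-1} = 0$ so that the $k = 0$ term slots in uniformly) yields
$$-2a_{\G}(2n) = \sum_{k=0}^{n-1} a_{\G}(2k)\,2^{2n-1-2k}\Bigl[\binom{2n-1}{2k} + 2\binom{2n-1}{2k-1}\Bigr].$$

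Finally, a single application of Pascal's identity $\binom{2n}{2k} = \binom{2n-1}{2k} + \binom{2n-1}{2k-1}$ rewrites the bracketed coefficient as $2\binom{2n}{2k} - \binom{2n-1}{2k}$, matching the claim. This is essentially a bookkeeping argument resting on the two preceding lemmas; the only mildly delicate point is ensuring the summation ranges line up correctly when applying Lemma~\ref{gromov-a-even=odd}, which pairs $a_{\G}(2\ell - 1)$ with $a_{\G}(2\ell)$ only for $\ell \ge 1$, so the $k = 0$ term of the even part must either be tracked separately or absorbed via the $\binom{2n-1}{-1}=0$ convention.
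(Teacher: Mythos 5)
Your proposal is correct and follows essentially the same route as the paper's proof: specialize the recursion \eqref{gromov-an-recursion} at index $2n$, kill the second sum via Lemma~\ref{gromov-a-simplification}, split the remaining sum by parity and use Lemma~\ref{gromov-a-even=odd} to merge the odd-indexed terms into the even ones, then finish with Pascal's identity. The only cosmetic difference is that the paper writes the $k=0$ term $2^{2n-1}a_{\mathcal{G}}(0)$ separately, whereas you absorb it via the convention $\binom{2n-1}{-1}=0$.
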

\begin{proof}
We begin with the recursion \eqref{gromov-an-recursion}
\begin{eqnarray*}
-2a_{\mathcal{G}}(2n) &=& \sum_{k=0}^{2n-2}a_{\mathcal{G}}(k)2^{2n-k-1}{2n-1 \choose k} + \sum_{k=0}^{2n-1}a_{\mathcal{G}}(k)2^{2n-k}{2n \choose k} \\
&=& \sum_{k=0}^{2n-2}a_{\mathcal{G}}(k)2^{2n-k-1}{2n-1 \choose k} \text{ by Lemma \ref{gromov-a-simplification} } \\
&=& 2^{2n-1}a_{\mathcal{G}}(0)  \\
&& + \sum_{k=1}^{n-1}\left[2^{2n-2k}{2n-1 \choose 2k-1}a_{\mathcal{G}}(2k-1) + 2^{2n-2k-1}{2n-1 \choose 2k}a_{\mathcal{G}}(2k)\right]\\
&=& 2^{2n-1}a_{\mathcal{G}}(0) + \sum_{k=1}^{n-1}2^{2n-2k-1}a_{\mathcal{G}}(2k)\left[2{2n-1 \choose 2k-1} + {2n-1 \choose 2k}\right],
\end{eqnarray*}
where the last line comes from Lemma \ref{gromov-a-even=odd}.  The result follows since $$2{2n-1 \choose 2k-1} + {2n-1 \choose 2k} = 2{2n \choose 2k} - {2n-1 \choose 2k},$$ for any $k \geq 0$.
\end{proof}

Now we are ready to prove Theorem \ref{gromov-a-are-euler}.
\begin{proof}
We consider the function $\widetilde{F}(x) = \frac{2e^x}{e^{2x}+1}$ so that $s_k = \widetilde{F}^{(k)}(0)$ for all $k$.  We will show that the even coefficients in the Taylor series expansion of $y$ satisfy the recursion given in Equation \eqref{gromov-a-good-recursion}.  We begin by rewriting $2e^x = (e^{2x}+1)\widetilde{F}(x)$.  Differentiating both sides of this equation $m$ times gives
\begin{displaymath}
2e^x = \widetilde{F}^{(m)}(x)(e^{2x}+1) + \sum_{k=0}^{m-1} {m \choose k} \widetilde{F}^{(k)}(x)2^{m-k}e^{2x},
\end{displaymath}
and evaluating this equation at $x=0$ gives 
\begin{equation}\label{mth-derivative-of-y}
2 = 2\widetilde{F}^{(m)}(0) + \sum_{k=0}^{m-1}{m \choose k}2^{m-k} \widetilde{F}^{(k)}(0) .
\end{equation}
Since the derivative of an even function is odd and the derivative of an odd function is even, $\widetilde{F}^{(2k+1)}(0) = 0$ for all $k$.  Thus for any $n$, evaluating Equation \eqref{mth-derivative-of-y} when $m=2n$ and $m=2n-1$ gives 
\begin{eqnarray*}
2 &=& 2\widetilde{F}^{(2n)}(0) + \sum_{k=0}^{2n-1}{2n \choose k}\widetilde{F}^{(k)}(0)2^{2n-k} \\
&=& 2\widetilde{F}^{(2n)}(0) + \sum_{k=0}^{n-1}{2n \choose 2k} \widetilde{F}^{(2k)}(0) 2^{2n-2k} \text{ and }\\
2 &=& 2\widetilde{F}^{(2n-1)}(0) + \sum_{k=0}^{2n-2}{2n-1 \choose k}\widetilde{F}^{(k)}(0)2^{2n-1-k} \\
&=& 0 + \sum_{k=0}^{n-1}{2n-1 \choose 2k} \widetilde{F}^{(2k)}(0)2^{2n-2k-1}.
\end{eqnarray*}
Thus 
\begin{eqnarray*}
-2\widetilde{F}^{(2n)}(0) &=& \sum_{k=0}^{n-1}{2n \choose 2k} \widetilde{F}^{(2k)}(0) 2^{2n-2k} - \sum_{k=0}^{n-1}{2n-1 \choose 2k} \widetilde{F}^{(2k)}(0)2^{2n-2k-1} \\
&=& \sum_{k=0}^{n-1} 2^{2n-2k-1}\widetilde{F}^{(2k)}(0) \left[2{2n \choose 2k} - {2n-1 \choose 2k}\right],
\end{eqnarray*}
and hence $a_{\mathcal{G}}(2n) = \widetilde{F}^{(2n)}(0)$ for all $n \geq 0$, as desired. 
\end{proof}

\subsection{The simplicial Gromov hyperbolization}

When $a=-1$, the coefficients $b_{\mathcal{G}}(n)$ are $$1,-1,-1,3,3,-17,-17,155,155,\ldots.$$  Once again the sequence $1,1,3,17,155,\ldots$ is the sequence of Genocchi numbers, which are the unsigned nonzero coefficients in the Taylor series expansion of $\widetilde{G}(x) = x \tanh{\frac{x}{2}}= \frac{x^2}{2} - \frac{x^4}{4!} + \frac{3x^6}{6!} + \cdots$  (note the sign change between this case and the simplicial M\"obius band hyperbolization).  

\begin{theorem} \label{gromov-b-are-genocchi}
Let $$\widetilde{G}(x) = -x\frac{e^x-1}{e^x+1}.$$  Then $\widetilde{G}^{(2n-1)}(0) = 0$ and $\widetilde{G}^{(2n)}(0) = b_{\mathcal{G}}(2n-2)$ for all $n \geq 1$. 
\end{theorem}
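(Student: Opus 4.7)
The plan is to mirror the proof of the cubical analogue, Theorem \ref{gromov-a-are-euler}, with the role of the weights $\binom{n}{k}2^{n-k}$ coming from the face numbers of the cube replaced by the weights $\binom{n+1}{k+1}$ coming from the face numbers of the simplex. First I would observe that $\widetilde{G}(x) = x\tanh(x/2) = x(e^x-1)/(e^x+1)$ (as described in the paragraph preceding the theorem) is an even function vanishing at the origin, which immediately yields $\widetilde{G}^{(2n-1)}(0) = 0$ for all $n \ge 1$ and also $\widetilde{G}(0) = 0$; it remains to identify $\widetilde{G}^{(2n)}(0)$ with $b_{\mathcal{G}}(2n-2)$.

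The next step is to establish the simplicial analogue of Lemma \ref{gromov-a-even=odd}, namely $b_{\mathcal{G}}(2n) = b_{\mathcal{G}}(2n-1)$ for all $n\ge 1$ (visible in the table: $-1=-1$, $3=3$, $-17=-17$). Evaluating Lemma \ref{gromov-s-top-terms} at $d=2n$ and combining it with the skew-symmetry relations $s_{\mathcal{G}}(0,2n) = s_{\mathcal{G}}(2n,2n)$ and $s_{\mathcal{G}}(0,2n-1) = -s_{\mathcal{G}}(2n-1,2n-1)$ from Lemma \ref{gromov-s-symmetry}, together with the identity $s_{\mathcal{G}}(d,d) = \frac{1}{d+1}b_{\mathcal{G}}(d)$ read off from the defining sum, the factors $2n+1$ and $2n$ in Lemma \ref{gromov-s-top-terms} cancel against the denominators to give $2b_{\mathcal{G}}(2n) = 2b_{\mathcal{G}}(2n-1)$.

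With this in hand I would derive a simplicial simplification identity in the spirit of Lemma \ref{gromov-a-simplification}, of the form $\sum_{k=0}^{2n-1} b_{\mathcal{G}}(k)\binom{2n+1}{k+1} = 0$ for $n\ge 1$. The proof is by induction on $n$: subtracting the instance of the recursion \eqref{gromov-bn-recursion} at index $2n$ from the one at index $2n-1$ and using $b_{\mathcal{G}}(2n) = b_{\mathcal{G}}(2n-1)$ together with the inductive hypothesis forces the required cancellation. Substituting this identity back into \eqref{gromov-bn-recursion} collapses the double sum into a clean recursion expressing $b_{\mathcal{G}}(2n)$ in terms of earlier even-index coefficients only, in analogy with Equation \eqref{gromov-a-good-recursion}.

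Finally I would derive a matching recursion on $\widetilde{G}^{(2n)}(0)$. Rewriting the defining equation as $(e^x+1)\widetilde{G}(x) = x(e^x-1)$, differentiating $m$ times via Leibniz, and evaluating at $x=0$ yields $2\widetilde{G}^{(m)}(0) + \sum_{j=0}^{m-1}\binom{m}{j}\widetilde{G}^{(j)}(0) = m$ for all $m\ge 2$. Taking $m = 2n$ and $m = 2n-1$ and using that the odd-order derivatives vanish produces two identities whose combination expresses $\widetilde{G}^{(2n)}(0)$ via the same combinatorial weights that appeared in the clean $b_{\mathcal{G}}$ recursion above. Together with the matching initial datum $\widetilde{G}^{(2)}(0) = 1 = b_{\mathcal{G}}(0)$, induction then finishes the proof. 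The main obstacle I anticipate is the algebraic bookkeeping in this last step: the factor $\frac{1}{k+1}$ in the definition of $s_{\mathcal{G}}(j,d)$ interacts less symmetrically with binomial coefficients than the factor $2^{-k}$ did in the cubical case, and identifying the two recursions requires careful use of identities such as $(2k+1)\binom{2n+1}{2k+1} = (2n+1)\binom{2n}{2k}$ to bring the weights into alignment.
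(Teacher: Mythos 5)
Your proposal is correct and follows essentially the same route as the paper: the even/odd equality $b_{\mathcal{G}}(2n)=b_{\mathcal{G}}(2n-1)$ via Lemmas \ref{gromov-s-top-terms} and \ref{gromov-s-symmetry}, the vanishing sum $\sum_{k=0}^{2n-1}b_{\mathcal{G}}(k)\binom{2n+1}{k+1}=0$, the collapsed even-index recursion \eqref{gromov-b-good-recursion}, and the matching Leibniz recursion for $\widetilde{G}^{(m)}(0)$ (the paper also works with $\widetilde{G}(x)=x\tanh(x/2)=x\frac{e^x-1}{e^x+1}$ in its proof, despite the sign in the theorem statement). The obstacle you anticipate at the end does not actually arise, since the $\frac{1}{k+1}$ weights enter only through $s_{\mathcal{G}}(d,d)=\frac{1}{d+1}b_{\mathcal{G}}(d)$ in the even/odd step, where they cancel exactly as you describe, while all later recursions involve only the binomial coefficients $\binom{n+1}{k+1}=f_k(\partial\sigma^n)$.
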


As in the proof of Theorem \ref{gromov-a-are-euler}, we require several lemmas whose proofs are identical to their cubical counterparts. 

\begin{lemma} \label{gromov-b-even=odd}
For all $n \geq 1$, $$b_{\mathcal{G}}(2n) = b_{\mathcal{G}}(2n-1).$$
\end{lemma}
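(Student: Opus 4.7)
The plan is to mimic exactly the proof of the cubical analogue, Lemma \ref{gromov-a-even=odd}, using the three structural facts already established for the simplicial coefficients: the recursion (Lemma \ref{gromov-s-recursion}), the ``top terms'' identity (Lemma \ref{gromov-s-top-terms}), and the skew-symmetry (Lemma \ref{gromov-s-symmetry}). The one extra ingredient is the closed-form evaluation on the diagonal, $s_{\mathcal{G}}(d,d) = \frac{1}{d+1}b_{\mathcal{G}}(d)$, which is immediate from the defining formula
\[
s_{\mathcal{G}}(j,d) = \sum_{k=j}^{d}\frac{1}{k+1}\binom{d-j}{d-k}b_{\mathcal{G}}(k),
\]
since only the $k=d$ term survives when $j=d$.

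To carry out the plan, I would begin with Lemma \ref{gromov-s-top-terms} specialized to $d=2n$:
\[
(2n+1)\bigl(s_{\mathcal{G}}(0,2n) + s_{\mathcal{G}}(2n,2n)\bigr) = 2n\bigl(s_{\mathcal{G}}(2n-1,2n-1) - s_{\mathcal{G}}(0,2n-1)\bigr).
\]
Next I would simplify each side using Lemma \ref{gromov-s-symmetry}. On the left, since $d=2n$ is even, $s_{\mathcal{G}}(0,2n) = s_{\mathcal{G}}(2n,2n)$, so the left side becomes $2(2n+1)\,s_{\mathcal{G}}(2n,2n) = 2b_{\mathcal{G}}(2n)$. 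On the right, since $d=2n-1$ is odd, $s_{\mathcal{G}}(2n-1,2n-1) = -s_{\mathcal{G}}(0,2n-1)$, which turns the parenthesized difference into $2s_{\mathcal{G}}(2n-1,2n-1)$, and then the right side becomes $2\cdot 2n\cdot s_{\mathcal{G}}(2n-1,2n-1) = 2b_{\mathcal{G}}(2n-1)$.

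Equating the two simplified expressions and cancelling the factor of $2$ yields $b_{\mathcal{G}}(2n) = b_{\mathcal{G}}(2n-1)$, as desired. There is really no obstacle here: every input has been set up in parallel with the cubical case, and the cancellation of the $(2n+1)$ and $2n$ factors with the reciprocals appearing in $s_{\mathcal{G}}(d,d)$ is precisely why Lemma \ref{gromov-s-top-terms} was stated in the asymmetric form $(d+1)(\cdots) = d(\cdots)$ rather than the cleaner cubical form $2(\cdots) = (\cdots)$. The only step where I would be mildly careful is keeping the sign bookkeeping straight when applying skew-symmetry in odd dimension; after that, the conclusion drops out.
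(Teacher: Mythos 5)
Your proof is correct and is precisely the argument the paper intends: the paper omits the proof of this lemma, stating it is identical to the cubical counterpart (Lemma \ref{gromov-a-even=odd}), and your derivation — combining Lemma \ref{gromov-s-top-terms} with the skew-symmetry of Lemma \ref{gromov-s-symmetry} and the diagonal evaluation $s_{\mathcal{G}}(d,d)=\frac{1}{d+1}b_{\mathcal{G}}(d)$ — is exactly that translation, with the $(d+1)$ and $d$ factors cancelling as you note. No gaps.
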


\begin{lemma} \label{gromov-b-simplification}
For all $n \geq 1$, $$\sum_{k=0}^{2n-1}b_{\mathcal{G}}(k){2n+1 \choose k+1} = 0.$$
\end{lemma}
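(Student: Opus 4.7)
The plan is to prove this by induction on $n$, mirroring exactly the proof of the cubical analogue (Lemma~\ref{gromov-a-simplification}). The two key tools available are the recursion~\eqref{gromov-bn-recursion} for $b_{\mathcal{G}}$ together with the identity $f_{k}(\partial\sigma^{n})=\binom{n+1}{k+1}$, and the parity symmetry $b_{\mathcal{G}}(2n)=b_{\mathcal{G}}(2n-1)$ furnished by Lemma~\ref{gromov-b-even=odd}.

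The base case $n=1$ I would verify by hand: $b_{\mathcal{G}}(0)\binom{3}{1}+b_{\mathcal{G}}(1)\binom{3}{2}=3-3=0$.

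For the inductive step, substituting $f_{k}(\partial\sigma^{n})=\binom{n+1}{k+1}$ into \eqref{gromov-bn-recursion} gives
$$-2b_{\mathcal{G}}(n)=\sum_{k=0}^{n-2}b_{\mathcal{G}}(k)\binom{n}{k+1}+\sum_{k=0}^{n-1}b_{\mathcal{G}}(k)\binom{n+1}{k+1}.$$
I would apply this twice, once at index $2n$ and once at index $2n-1$. By Lemma~\ref{gromov-b-even=odd} the two left-hand sides agree, so the right-hand sides must agree. One term, namely $\sum_{k=0}^{2n-2}b_{\mathcal{G}}(k)\binom{2n}{k+1}$, appears in both right-hand sides and cancels, leaving
$$\sum_{k=0}^{2n-1}b_{\mathcal{G}}(k)\binom{2n+1}{k+1}=\sum_{k=0}^{2n-3}b_{\mathcal{G}}(k)\binom{2n-1}{k+1}.$$
The right-hand side is precisely the sum appearing in the lemma statement at $n-1$, which vanishes by the inductive hypothesis, completing the step.

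There is no real obstacle: the argument is formally identical to the cubical case, the only book-keeping being the two different binomial factors coming from $\partial\sigma^{n-1}$ and $\partial\sigma^{n}$, in place of the analogous $2$-power weights in the cubical recursion. The essential conceptual content already lies in the parity identity $b_{\mathcal{G}}(2n)=b_{\mathcal{G}}(2n-1)$, which itself rests on the skew-symmetry and top-term relations for the coefficients $s_{\mathcal{G}}(j,d)$ established in Lemmas~\ref{gromov-s-top-terms} and~\ref{gromov-s-symmetry}.
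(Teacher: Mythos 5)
Your proof is correct and is exactly the argument the paper intends: the paper omits this proof, stating it is identical to the cubical analogue (Lemma~\ref{gromov-a-simplification}), and your induction — applying the recursion~\eqref{gromov-bn-recursion} at indices $2n$ and $2n-1$, equating via Lemma~\ref{gromov-b-even=odd}, cancelling the common sum, and invoking the inductive hypothesis — is precisely that adaptation with $f_k(\partial\sigma^n)=\binom{n+1}{k+1}$ in place of the cubical face counts.
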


\begin{lemma}
For all $n \geq 1$, 
\begin{equation} \label{gromov-b-good-recursion}
-2b_{\mathcal{G}}(2n) = -1 + \sum_{k=0}^{n-1}{2n+1 \choose 2k+1}b_{\mathcal{G}}(2k).
\end{equation}
\end{lemma}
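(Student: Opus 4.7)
The plan is to follow the template of the analogous cubical calculation but with binomial coefficients playing the role of the cubical face-counts. Starting from the defining recursion \eqref{gromov-bn-recursion} with $n$ replaced by $2n$, and substituting $f_k(\partial\sigma^m)=\binom{m+1}{k+1}$, I would write
\[
-2b_{\mathcal{G}}(2n)=\sum_{k=0}^{2n-2}b_{\mathcal{G}}(k)\binom{2n}{k+1}+\sum_{k=0}^{2n-1}b_{\mathcal{G}}(k)\binom{2n+1}{k+1}.
\]
The second sum is precisely the quantity shown to vanish in Lemma \ref{gromov-b-simplification}, so it drops out entirely, leaving
\[
-2b_{\mathcal{G}}(2n)=\sum_{k=0}^{2n-2}b_{\mathcal{G}}(k)\binom{2n}{k+1}.
\]

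Next I would separate the sum into its even-indexed and odd-indexed terms. Writing $k=2j$ for $0\le j\le n-1$ and $k=2j-1$ for $1\le j\le n-1$ gives
\[
\sum_{j=0}^{n-1}b_{\mathcal{G}}(2j)\binom{2n}{2j+1}+\sum_{j=1}^{n-1}b_{\mathcal{G}}(2j-1)\binom{2n}{2j}.
\]
Lemma \ref{gromov-b-even=odd} lets me replace $b_{\mathcal{G}}(2j-1)$ by $b_{\mathcal{G}}(2j)$ for every $j\ge 1$, after which I can combine the two sums term-by-term. For each $j\ge 1$, Pascal's identity
\[
\binom{2n}{2j+1}+\binom{2n}{2j}=\binom{2n+1}{2j+1}
\]
collapses the paired binomials into a single one. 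The $j=0$ term is unpaired (since the odd sum starts at $j=1$), so it contributes $b_{\mathcal{G}}(0)\binom{2n}{1}=2n$ rather than $b_{\mathcal{G}}(0)\binom{2n+1}{1}=2n+1$.

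The resulting expression is $2n+\sum_{j=1}^{n-1}b_{\mathcal{G}}(2j)\binom{2n+1}{2j+1}$, which equals $-1+\sum_{k=0}^{n-1}b_{\mathcal{G}}(2k)\binom{2n+1}{2k+1}$ since $b_{\mathcal{G}}(0)=1$ and $-1+(2n+1)=2n$. The only real obstacle is the careful bookkeeping at the boundary $j=0$: that off-by-one between $\binom{2n}{1}$ and $\binom{2n+1}{1}$ is exactly what produces the isolated $-1$ in the statement of the lemma, so it must be tracked explicitly rather than absorbed silently into the Pascal step.
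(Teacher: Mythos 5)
Your argument is correct and is precisely the simplicial translation of the paper's proof of the cubical analogue (Equation \eqref{gromov-a-good-recursion}), which is the proof the paper intends here when it says the simplicial lemmas are proved identically: invoke the recursion \eqref{gromov-bn-recursion}, eliminate the second sum by Lemma \ref{gromov-b-simplification}, split the remaining sum by parity, apply Lemma \ref{gromov-b-even=odd}, and combine binomial coefficients. Your explicit tracking of the unpaired $j=0$ term, which produces the isolated $-1$, is exactly the right bookkeeping and matches the role of the $2^{2n-1}a_{\mathcal{G}}(0)$ term in the cubical computation.
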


Now we are ready to prove Theorem \ref{gromov-b-are-genocchi}. 

\begin{proof}
We prove the claim by induction.  As in Equation \eqref{gromov-G-implicit-diff}, we get $$(e^x+1)\widetilde{G}^{(m)}(x) + \sum_{k=0}^{m-1}\widetilde{G}^{(k)}(x)e^x = (x+m)e^x.$$
Evaluating this equation at $x=0$ when $m = 2n+2$ and $m=2n+1$ gives
\begin{eqnarray*}
2n+2 &=& 2\widetilde{G}^{(2n+2)}(0) + \sum_{k=0}^{2n+1}{2n+2 \choose k}\widetilde{G}^{(k)}(0) \\
&=& 2\widetilde{G}^{(2n+2)}(0) + \sum_{k=1}^{n}{2n+2 \choose 2k}\widetilde{G}^{(2k)}(0) \\
2n+1 &=& 2\widetilde{G}^{(2n+1)}(0) + \sum_{k=0}^{2n}{2n+1 \choose k} \widetilde{G}^{(k)}(0) \\
&=& 0 + \sum_{k=1}^{n}{2n+1 \choose 2k}\widetilde{G}^{(2k)}(0).
\end{eqnarray*}
Subtracting these equations gives 
\begin{eqnarray*}
1 &=& 2\widetilde{G}^{(2n+2)}(0) + \sum_{k=1}^n{2n+1 \choose 2k-1}\widetilde{G}^{(2k)}(0) \\
&=& 2\widetilde{G}^{(2n+2)}(0) + \sum_{k=1}^{n}{2n+1 \choose 2k-1}b_{\mathcal{G}}(2k-2).
\end{eqnarray*}
Thus by Equation \eqref{gromov-b-good-recursion}, $\widetilde{G}^{(2n+2)}(0) = b_{\mathcal{G}}(2n).$
\end{proof}

\bibliography{hyperbolization}
\bibliographystyle{plain}

\end{document}